\begin{document}
\title[
]
{SCATTERING FOR THE RADIAL 3D CUBIC FOCUSING INHOMOGENEOUS NONLINEAR SCHR\"ODINGER EQUATION}
\author[c.M.GUZMAN , L.G FARAH]
{LUIZ G. FARAH AND CARLOS M. GUZM\'AN}  

\address{LUIZ G. FARAH \hfill\break
Department of Mathematics, Federal University of Minas Gerais, BRAZIL}
\email{lgfarah@gmail.com}

\address{CARLOS M. GUZM\'AN \hfill\break
Department of Mathematics, Federal University of Minas Gerais, BRAZIL}
\email{carlos.guz.j@gmail.com}

%date{}
%thanks{Submitted July 2, 2005. Published October 26, 2005.}
%thanks{The first author is partly supported by
% grant from the Packard Foundation. The authors also thank Kenji Nakanishi for
%ointing out the connections between this paper and \cite{nakanishi}.}
%subjclass[2000]{35J10}
%keywords{Local well-posedness; uniform well-posedness; scattering theory;
%hfill\break\indent
 %trichartz estimates}

\begin{abstract}
The purpose of this work is to study the 3D focusing inhomogeneous nonlinear Schr\"odinger equation 
$$
i u_t +\Delta u+|x|^{-b}|u|^2 u = 0, 
$$
where $0<b<1/2$. Let $Q$ be the ground state solution of $-Q+\Delta Q+ |x|^{-b}|Q|^{2}Q=0$ and $s_c=(1+b)/2$. We show that if the radial initial data $u_0$ belongs to $H^1(\mathbb{R}^3)$ and satisfies $E(u_0)^{s_c}M(u_0)^{1-s_c}<E(Q)^{s_c}M(Q)^{1-s_c}$ and $\|  \nabla u_0 \|_{L^2}^{s_c} \|u_0\|_{L^2}^{1-s_c}<\|\nabla Q \|_{L^2}^{s_c} \|Q\|_{L^2}^{1-s_c}$, then the corresponding solution is global and scatters in $H^1(\mathbb{R}^3)$. Our proof is based in the ideas introduced by Kenig-Merle \cite{KENIG} in their study of the energy-critical NLS and Holmer-Roudenko \cite{HOLROU} for the radial 3D cubic NLS.
\end{abstract}

%%%%%%%%%%%%%%%%%%%%%%%%%%%%%%%%%%%%%%%%%%%%%%%%%%%%%%%%%%%%%%%%%%%%%%
%%%%%%%%%%%%%%%%%%%%%%%%%%%%%%%%%%%%%%%%%%%%%%%%%%%%%%%%%%%%%%%%%%%%%%

\maketitle
\numberwithin{equation}{section}
\newtheorem{theorem}{Theorem}[section]
\newtheorem{proposition}[theorem]{Proposition}
\newtheorem{lemma}[theorem]{Lemma}
\newtheorem{corollary}[theorem]{Corollary}
\newtheorem{remark}[theorem]{Remark}
\newtheorem{definition}[theorem]{Definition}
%%%%%%%%%%%%%%%%%%%%%%%%%%%%%%%%%%%%%%%%%%%%%%%%%%%%%%%%%%%%%%%%%%%%%
%%%%%%%%%%%%%%%%%%%%%%%%%%%%%%%%%%%%%%%%%%%%%%%%%%%%%%%%%%%%%%%%%%%%%

\section{Introduction}
In this paper, we consider the Cauchy problem, also called the initial value problem (IVP), for the focusing inhomogenous nonlinear Schr\"odinger (INLS) equation on $\mathbb{R}^3$, that is  
%on $\mathbb{R}^3$, with radial data in $H^1(\mathbb{R}^3)$,
\begin{equation}\label{INLS}
\left\{\begin{array}{cl}
i\partial_tu +\Delta u + |x|^{-b} |u|^2 u =0, & \;\;\;t\in \mathbb{R} ,\;x\in \mathbb{R}^3,\\
u(0,x)=u_0(x), &
\end{array}\right.
\end{equation}
where $u = u(t,x)$ is a complex-valued function in space-time  $\mathbb{R}\times\mathbb{R}^3$ and $0<b<1/2$.

\ Before review some results about the Cauchy problem \eqref{INLS}, let us recall the critical Sobolev index. For a fixed $\delta >0$, the rescaled function  $u_\delta(t,x)=\delta^{\frac{2-b}{2}}u(\delta^2 t,\delta x)$ is solution of \eqref{INLS} if only if $u(t,x)$ is a solution. This scaling property gives rise to a scale-invariant norm. Indeed, computing the homogeneus Sobolev norm of $u_\delta(0,x)$ we get
$$
\|u_\delta(0,.)\|_{\dot{H^s}}=\delta^{s-\frac{3}{2}+\frac{2-b}{2}}\|u_0\|_{\dot{H^s}}.
$$
Thus, the scale invariant Sobolev space is $H^{s_c}(\mathbb{R}^3)$, where $s_c=\frac{1+b}{2}$ (the critical Sobolev index). Note that, the restriction $0<b< 1/2$ implies $0<s_c<1$ and therefore we are in the mass-supercritical and energy-subcritical case. In addition, we recall that the INLS equation has the following conserved quantities
\begin{equation}\label{mass}
M[u_0]=M[u(t)]=\int_{\mathbb{R}^3}|u(t,x)|^2dx
\end{equation}
and
\begin{equation}\label{energy}
E[u_0]=E[u(t)]=\frac{1}{2}\int_{\mathbb{R}^3}| \nabla u(t,x)|^2dx-\frac{1}{4}\left\| |x|^{-b}|u|^{4}\right\|_{L^1_x},
\end{equation}
which are calling Mass and Energy, respectively.

\ Next, we briefly review recent developments on the well-posedness theory for the general INLS equation
\begin{equation}\label{GINLS}
\left\{\begin{array}{cl}
i\partial_tu +\Delta u + |x|^{-b} |u|^\alpha u =0, & \;\;\;x\in \mathbb{R}^N,\\
u(0,x)=u_0(x). &
\end{array}\right.
\end{equation}
Genoud and Stuart \cite{GENOUD}-\cite{GENSTU}, using the abstract theory developed by Cazenave \cite{CAZENAVEBOOK} and some sharp Gagliardo-Nirenberg inequalities, showed that \eqref{GINLS} is well-posed in $H^1(\mathbb{R}^N)$
      \begin{itemize}
  	\item locally if $0<\alpha <2^*,
  	$ 
  	\item globally for small initial condition if 
  	$\frac{4-2b}{N}<\alpha <\frac{4-2b}{N-2}$,
  	\item globally for any initial condition if 
  	$ 0 < \alpha < \frac{4-2b}{N}$,
  	\item globally if $\alpha = \frac{4-2b}{N}$, assuming $\|u_0\|_{L^2}<\|Q\|_{L^2}$,
 \end{itemize}
where $Q$ is the ground state of the equation $-Q+\Delta Q+ |x|^{-b}|Q|^{\frac{4-2b}{N}}Q=0$ and $2^*=\frac{4-2b}{N-2}$, if $N\geq 3$ or $2^* =\infty$, if $N=1,2$. Also, Combet and Genoud \cite{COMGEN} established the classification of minimal mass blow-up solutions of \eqref{GINLS} with $L^2$ critical nonlinearity, that is, $\alpha = \frac{4-2b}{N}$.   \\  
%\begin{equation}\label{GSE} % GSE=GROUND STATE EQUATION
%-Q+\Delta Q+ |x|^{-b}|Q|^{\alpha}Q=0.
%\end{equation}
Recently, the second author in \cite{CARLOS}, using the contraction mapping principle based on the Strichartz estimates, proved that the IVP \eqref{GINLS} is locally well-posed in $H^1(\mathbb{R}^N)$, for $0<\alpha<2^*$. Moreover, for $N\geq 2$, $\frac{4-2b}{N}<\alpha <2^*$ these solutions are global in $H^1(\mathbb{R}^N)$ for small initial data. It worth mentioning that Genoud and Stuart \cite{GENOUD}-\cite{GENSTU} consider $0<b<\min\{2,N\}$, and second author in \cite{CARLOS} assume $0<b<\widetilde{2}$, where $\widetilde{2}=N/3$ if $N=1,2,3$ and $\widetilde{2}=2$ if $N\geq 4$. This new restriction on $b$ is needed to estimate the nonlinear part of the equation in order to use the well known Strichartz estimates associated to the linear flow.

\ On the other hand, since 
\begin{equation}\label{QI1}
\|u_\delta\|_{L^2_x}=\delta^{-s_c}\|u\|_{L^2_x},\;\;\;\;\|\nabla u_\delta\|_{L^2_x}=\delta^{1-s_c}\|\nabla u\|_{L^2_x}
\end{equation}
and
$$
\left \| |x|^{-b}|u_\delta|^{4} \right\|_{L^1_x}=\delta^{2(1-s_c)}\left \| |x|^{-b}|u|^{4} \right\|_{L^1_x},
$$
the following quantities enjoy a scaling invariant property
\begin{equation}\label{QI2}
E[u_\delta]^{s_c}M[u_\delta]^{1-s_c}=E[u]^{s_c}M[u]^{1-s_c},\;\;\|\nabla u_\delta\|^{s_c}_{L^2_x}\|u_\delta\|^{1-s_c}_{L^2_x}=\|\nabla u\|^{s_c}_{L^2_x}\|u\|^{1-s_c}_{L^2_x}.
\end{equation}
These quantities were introduced in Holmer-Roudenko \cite{HOLROU} in the context of mass-supercritical and energy-subcritical nonlinear Schr\"odinger equation (NLS), which is equation \eqref{INLS} with $b=0$, and they were used to understand the dichotomy between blowup/global regularity. Indeed, in \cite{HOLROU}, the authors consider the $3D$ cubic NLS and proved that if the initial data  $u_0\in H^1(\mathbb{R}^3)$ is radial and satisfy 
\begin{equation}\label{ENLS}
E(u_0)M(u_0)<E(Q)M(Q)
\end{equation}
and  
\begin{equation}\label{MNLS}
\|  \nabla u_0 \|_{L^2} \|u_0\|_{L^2}<\|\nabla Q \|_{L^2} \|Q\|_{L^2},
\end{equation}
then the corresponding solution $u(t)$ of the Cauchy problem \eqref{INLS} (with $b=0$) is globally defined and scatters\footnote{Notice that, in this case the critical Sobolev index is $s_c=1/2$.} in $H^1(\mathbb{R}^3)$ where $Q$ is the ground state solution of the nonlinear elliptic equation $-Q+\Delta Q+|Q|^{2}Q=0$. The subsequent work Duyckaerts-Holmer-Roudenko \cite{DUCHOLROU} has removed the radial assumption on the initial data. In both papers, they used the method of the concentration-compactness and rigidity technique employed by Kenig-Merle \cite{KENIG} in their study of the energy critical NLS. Inspired by these works, we investigate same problem for the IVP \eqref{INLS}.\\

\begin{remark}
The results in Holmer-Roudenko \cite{HOLROU} and Duyckaerts-Holmer-Roudenko \cite{DUCHOLROU} have been generalized for the general NLS equation \eqref{GINLS} (with $b=0$) in the mass-supercritical and energy-subcritical case, by Fang-Xie-Cazenave \cite{JIANCAZENAVE} and Guevara \cite{GUEVARA}. Moreover, the recent works of Hong \cite{HONG} and Killip-Murphy-Visan-Zheng \cite{KMVZ2016} also obtained analogous result for the cubic focusing NLS equation perturbed by a potential. It's worth mentioning that global well-posedness and scattering for the mass critical and energy critical NLS has also received a lot of attention in the literature and we refer to Dodson \cite{D12}-\cite{D15}-\cite{D16}, Tao-Visan-Zhang \cite{TVZ07}, Killip-Tao-Visan \cite{KTV09}, Killip-Visan-Zhang \cite{KVZ08}, Colliander-Keel-Staffilani-Takaoka-Tao \cite{CKSTT08}, Ryckman-Visan \cite{RV07}, Visan \cite{V07} and Killip-Visan \cite{KV10} for the results in these directions. 

\end{remark}
\ In a recent work of the first author in \cite{LG} showed global well-posedness for the $L^2$-supercritical and $H^1$-subcritical inhomogeneous nonlinear Schr\"odinger equation \eqref{GINLS} under assumptions similar to \eqref{ENLS}-\eqref{MNLS}. Below we state his result for the $3D$ cubic INLS, since this is the case we are interested in the present work.

\begin{theorem}\label{TG}%    TG:theorem global
Let $0<b<1$. Suppose that $u(t)$ is the solution of \eqref{INLS} with initial data $u_0\in H^1(\mathbb{R}^3)$ satisfying
\begin{equation}\label{EMC} %EMC: ENERGY MASS CONDITION
E[u_0]^{s_c}M[u_0]^{1-s_c}<E[Q]^{s_c}M[Q]^{1-s_c} 
\end{equation}
and 
\begin{equation}\label{GFC}%GFC: GRADIENT FUNCTION CONDITION
\|  \nabla u_0 \|_{L^2}^{s_c} \|u_0\|_{L^2}^{1-s_c}<\|\nabla Q \|_{L^2}^{s_c} \|Q\|_{L^2}^{1-s_c},
\end{equation}
then $u(t)$ is a global solution in $H^1(\mathbb{R}^3)$. Furthermore, for any $t\in \mathbb{R}$ we have
\begin{equation}\label{GR}%GFC: Global result
\|  \nabla u(t) \|_{L^2}^{s_c} \|u(t)\|_{L^2}^{1-s_c}<\|\nabla Q \|_{L^2}^{s_c} \|Q\|_{L^2}^{1-s_c},
\end{equation}
where $Q$ is unique positive solution of the elliptic equation 
\begin{equation}\label{GSE} % GSE=GROUND STATE EQUATION
-Q+\Delta Q+ |x|^{-b}|Q|^{2}Q=0.
\end{equation}
\end{theorem}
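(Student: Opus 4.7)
The plan is to establish the a priori bound \eqref{GR} by combining a sharp Gagliardo–Nirenberg type inequality (with $Q$ as extremizer) with a standard bootstrap/continuity argument, and then deduce globality from the uniform $H^1$ control that follows. All quantities are built so as to be invariant under the scaling that preserves the equation, matching \eqref{QI1}--\eqref{QI2}.

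First I would prove the weighted sharp Gagliardo--Nirenberg inequality
\[
\bigl\||x|^{-b}|u|^{4}\bigr\|_{L^{1}}\;\leq\;C_{GN}\,\|\nabla u\|_{L^{2}}^{3+b}\,\|u\|_{L^{2}}^{1-b},
\]
with optimal constant attained at $Q$, and then use the ground-state equation \eqref{GSE} together with its Pohozaev identity (testing against $Q$ and against $x\cdot\nabla Q$ respectively) to obtain the two algebraic identities
\[
\|\nabla Q\|_{L^{2}}^{2}+\|Q\|_{L^{2}}^{2}=\bigl\||x|^{-b}|Q|^{4}\bigr\|_{L^{1}},\qquad
(1-b)\,\|\nabla Q\|_{L^{2}}^{2}=(3+b)\,\|Q\|_{L^{2}}^{2}.
\]
These express $C_{GN}$, $E[Q]$ and the ratio $\|\nabla Q\|_{L^{2}}/\|Q\|_{L^{2}}$ in terms of one another, which is exactly what makes the critical product $E[Q]^{s_c}M[Q]^{1-s_c}$ readable off the sharp constant.

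Next, setting $\widetilde z(u)=\|\nabla u\|_{L^{2}}\|u\|_{L^{2}}^{(1-s_c)/s_c}$ (so that $\widetilde z(u)^{s_c}$ is precisely the left-hand side of \eqref{GFC}), I would plug the sharp GN bound into the definition of $E$ and check by a direct computation, using $s_c=(1+b)/2$, that all powers of $\|u\|_{L^{2}}$ match so that the mass drops out of the scale-invariant combination; this yields
\[
E[u]^{s_c}\,M[u]^{1-s_c}\;\geq\;\Bigl[\tfrac{1}{2}\,\widetilde z(u)^{2}-\tfrac{C_{GN}}{4}\,\widetilde z(u)^{3+b}\Bigr]^{s_c}=:G\bigl(\widetilde z(u)\bigr)^{s_c}.
\]
The function $G$ vanishes at the origin, is strictly concave on a neighborhood of its unique critical point, and a short computation using the Pohozaev relations above shows that this critical point is exactly $\widetilde z(Q)$ with critical value $E[Q]$, so that $G(\widetilde z(Q))^{s_c}=E[Q]^{s_c}M[Q]^{1-s_c}$.

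Finally I would run the continuity argument on the maximal existence interval $I\ni 0$ of the $H^{1}$-solution provided by the local theory (see \cite{CARLOS}). By \eqref{GFC} one has $\widetilde z(u_{0})<\widetilde z(Q)$. If at some $t_{*}\in I$ equality $\widetilde z(u(t_{*}))=\widetilde z(Q)$ were reached, then by the previous step together with conservation of mass and energy,
\[
E[u_{0}]^{s_c}M[u_{0}]^{1-s_c}=E[u(t_{*})]^{s_c}M[u(t_{*})]^{1-s_c}\;\geq\;G\bigl(\widetilde z(Q)\bigr)^{s_c}=E[Q]^{s_c}M[Q]^{1-s_c},
\]
contradicting \eqref{EMC}. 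Continuity of $t\mapsto\widetilde z(u(t))$ on $I$ therefore forces $\widetilde z(u(t))<\widetilde z(Q)$ throughout $I$, which is \eqref{GR}. Raising to the $1/s_c$-power and using mass conservation yields a uniform bound on $\|\nabla u(t)\|_{L^{2}}$, and the $H^{1}$ blow-up alternative closes the globality claim. The one step requiring genuine care (rather than being bookkeeping) is matching the exponents of $\|u\|_{L^{2}}$ in the GN-based lower bound for $E[u]^{s_c}M[u]^{1-s_c}$, since it is precisely the mass-supercritical/energy-subcritical balance encoded in $s_c=(1+b)/2$ that makes this work; the rest is a convexity/continuity argument identical in spirit to Holmer--Roudenko \cite{HOLROU}.
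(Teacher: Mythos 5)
Your argument is correct and is exactly the Weinstein/Holmer–Roudenko-style convexity–continuity argument that the paper relies on: note that the paper does not reprove Theorem \ref{TG} but imports it from \cite{LG}, and the ingredients you use (the sharp Gagliardo–Nirenberg inequality \eqref{GNI}–\eqref{GNI2}, the Pohozaev relations \eqref{GS1}–\eqref{GS2}, and the coercivity statement of Lemma \ref{LGS}) are precisely the ones the paper records in Section 5 for this purpose. The exponent bookkeeping you flag does check out, since $(3+b)\tfrac{1-s_c}{s_c}=(1-b)+2\tfrac{1-s_c}{s_c}$ when $s_c=\tfrac{1+b}{2}$, and the unique positive critical point of $G$ is $\widetilde z(Q)$ by \eqref{GNI2}.
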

\begin{remark} In \cite[Teorema $1.6$]{LG} the author also considers the case 
$$
\|  \nabla u_0 \|_{L^2}^{s_c} \|u_0\|_{L^2}^{1-s_c}>\|\nabla Q \|_{L^2}^{s_c} \|Q\|_{L^2}^{1-s_c}.
$$
Indeed assuming the last relation and \eqref{EMC} then the solution blows-up in finite time if the initial data $u_0$ has finite variance, i.e., $|x|u_0\in L^2(\mathbb{R}^3)$. This is the extension to the INLS model of the result proved by Holmer-Roudenko \cite{HOLROU2} for the NLS equation.
\end{remark}   

\ Our aim in this paper is to show that the global solutions obtained in Theorem \ref{TG} also scatters (in the radial case) according to the following definition

\begin{definition}\label{SCATTER}
A global solution $u(t)$ to the Cauchy problem \eqref{INLS} scatters forward in time
in $H^1(\mathbb{R}^3)$, if there exists  $\phi^+\in H^1(\mathbb{R}^3)$ such that
$$
\lim_{t\rightarrow +\infty}\|u(t)-U(t)\phi^+\|_{H^1}=0.
$$
Also, we say that  $u(t)$ scatters backward in time if there exist $\phi^-\in H^1(\mathbb{R}^3)$ such that 
$$
\lim_{t\rightarrow -\infty}\|u(t)-U(t)\phi^-\|_{H^1}=0.
$$
Here, $U(t)$ denotes unitary group associated to the linear equation $i\partial_tu +\Delta u=0$, with initial data $u_0$.
\end{definition}

The precise statement of our main theorem is the following.
 
 \begin{theorem}\label{SCATTERING}
 Let $u_0\in H^1(\mathbb{R}^3)$ be radial and $0<b<1/2$. Suppose that \eqref{EMC} and \eqref{GFC} are satisfied then the solution $u$ of \eqref{INLS} is global in $H^1(\mathbb{R}^3)$ and scatters both forward and backward in time. 
 \end{theorem}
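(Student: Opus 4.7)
I would implement the concentration-compactness/rigidity roadmap of Kenig-Merle \cite{KENIG} and Holmer-Roudenko \cite{HOLROU}, adapted to the inhomogeneous nonlinearity $|x|^{-b}|u|^2u$. Since Theorem \ref{TG} already yields a global $H^1$ solution satisfying \eqref{GR}, only the scattering assertion has to be proved. The first step is to develop the linear/nonlinear estimates: using Strichartz estimates for $U(t)$ together with H\"older, Hardy and radial Sobolev embeddings to handle the factor $|x|^{-b}$ (split into $|x|<1$ and $|x|>1$), I would close the nonlinear estimates at the scaling-critical level $\dot H^{s_c}$ --- this is exactly where the restriction $0<b<1/2$ enters --- and deduce local well-posedness, a perturbation/stability lemma, small-data scattering, and a scattering criterion: a subthreshold radial solution scatters provided a suitable scaling-critical spacetime norm $\|u\|_{S(\dot H^{s_c})(\mathbb{R})}$ is finite. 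In parallel, the sharp Gagliardo-Nirenberg inequality saturated by $Q$ combined with the hypotheses \eqref{EMC}--\eqref{GFC} yields a uniform subthreshold coercivity, giving both an $H^1$ gap below the ground-state level and a strictly positive lower bound on the Pohozaev functional that drives the virial identity used later on.

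Second, I would argue by contradiction. Define
\[
(ME)_c = \sup\bigl\{\delta>0 : \text{every radial }u_0\text{ satisfying \eqref{GFC} and }E[u_0]^{s_c}M[u_0]^{1-s_c}<\delta\text{ scatters}\bigr\}.
\]
Small-data theory gives $(ME)_c>0$, while failure of the theorem means $(ME)_c<E[Q]^{s_c}M[Q]^{1-s_c}$. Take a sequence $u_{0,n}$ of radial subthreshold data realizing $(ME)_c$ whose solutions do not scatter. A linear profile decomposition for bounded radial sequences in $H^1$ --- with time translations and dyadic scales as the only free parameters, since radial symmetry suppresses space translations --- together with the perturbation lemma forces a single non-trivial profile. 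After rescaling and extracting a subsequence, this produces a \emph{critical element} $u_c\in C(\mathbb{R};H^1)$ whose trajectory is precompact in $H^1$ modulo the scaling and modulation symmetries $u\mapsto \lambda^{(2-b)/2}e^{i\theta}u(\lambda\,\cdot)$.

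Third comes the rigidity step, where one shows that such a $u_c$ must vanish. After controlling the parameters $\lambda(t)$ and $\theta(t)$ (in particular showing $\lambda(t)$ is bounded from above and away from zero), I would apply a radial localized virial identity
\[
\frac{d^2}{dt^2}\!\int \chi(|x|/R)\,|x|^2\,|u_c(t,x)|^2\,dx \;\geq\; C\bigl(\|\nabla u_c(t)\|_{L^2}^2 + \bigl\||x|^{-b}|u_c(t)|^4\bigr\|_{L^1}\bigr) - \varepsilon(R),
\]
where the positive leading coefficient comes from the subthreshold Pohozaev coercivity of Step 1 (differentiation of $|x|^{-b}$ contributes an additional favorable $b$-term), and the error $\varepsilon(R)$ tends to $0$ as $R\to\infty$ uniformly in $t$, thanks both to the precompactness of the orbit and to the radial $H^1$ decay $|u_c(t,x)|\lesssim |x|^{-1}\|u_c(t)\|_{H^1}$. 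Integrating on $[0,T]$, dividing by $T$ and letting $T\to\infty$ then $R\to\infty$ contradicts the a priori bound on the virial quantity, forcing $u_c\equiv 0$ and contradicting the definition of $(ME)_c$.

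The delicate technical piece is the nonlinear Strichartz estimate with the singular weight $|x|^{-b}$, which is what restricts $b$ to $(0,1/2)$; but I expect the actual main obstacle to be the rigidity step, and specifically the control of the scale parameter $\lambda(t)$ in the compact orbit. The inhomogeneity breaks scaling invariance, so compactness \emph{modulo scaling} does not by itself preclude $\lambda(t)\to 0$ or $\lambda(t)\to\infty$; ruling these out requires combining conservation of mass, the precise $\delta^{(2-b)/2}$-scaling of the INLS, and the subthreshold variational information --- the part where the inhomogeneous analysis genuinely departs from the Holmer-Roudenko setting.
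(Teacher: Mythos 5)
Your roadmap is the one the paper follows: Strichartz/H\"older estimates for the weight $|x|^{-b}$ split over $|x|\le 1$ and $|x|\ge 1$ (with $b<1/2$ entering precisely in the gradient estimate, Lemma \ref{LG3}), a scattering criterion in terms of $\|u\|_{S(\dot H^{s_c})}$, small-data theory, long-time perturbation, a threshold $\delta_c$, extraction of a critical element via profile decomposition, and a localized virial rigidity argument powered by the coercivity of Lemma \ref{LGS}. Up to that level of description the proposal is faithful to the paper.

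The substantive divergence — and the point at which your proposal has a genuine gap — is the role of scaling. You build dyadic scales into the profile decomposition and assert only that the critical element is precompact in $H^1$ \emph{modulo} the symmetries $u\mapsto \lambda^{(2-b)/2}e^{i\theta}u(\lambda\cdot)$, and you then (correctly, for that formulation) identify the control of $\lambda(t)$ as the main unresolved obstacle, offering only a sketch of how mass conservation and the variational information might rule out $\lambda(t)\to 0,\infty$. In the paper this obstacle never arises: because the problem is $H^1$-subcritical ($s_c<1$) and the subthreshold hypotheses \eqref{EMC}--\eqref{GFC} give, via Theorem \ref{TG} and Lemma \ref{LGS}, a uniform $H^1$ bound on all the sequences involved, the profile decomposition (Proposition \ref{LPD}) for bounded radial sequences in $H^1$ requires \emph{only} time shifts $t_n^j$ — no scales and no space translations — exactly as in Holmer--Roudenko. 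Consequently the critical orbit $K=\{u_c(t):t\ge 0\}$ is precompact in $H^1(\mathbb{R}^3)$ outright (Proposition \ref{PSC}), uniform spatial localization of $\nabla u_c$ follows directly, and the virial argument closes with no modulation analysis. (The phase modulation $e^{i\theta}$ is also superfluous, since the phase group is compact.) So either you should drop the scaling parameter from your decomposition — which is the correct move in this energy-subcritical setting and is what the paper does — or, if you insist on compactness modulo scaling, you owe a complete proof that $\lambda(t)$ stays in a compact subset of $(0,\infty)$; as written, that step is missing and it is the step on which your version of the rigidity argument would otherwise fail.
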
 
\begin{remark} The above theorem extends the result obtained by Holmer-Roudenko \cite{HOLROU} to the INLS model. On the other hand, since the solutions of the INLS equation do not enjoy conservation of Momentum, we were not able to use the same ideas introduced by Duyckaerts-Holmer-Roudenko \cite{DUCHOLROU} to remove the radial assumption.

\end{remark}
\ The plan of this work is as follows: in the next section we introduce some notations and estimates. In Section $3$, we sketch the proof of our main result (Theorem \ref{SCATTERING}), assuming all the technical points. In Section $4$, we collect some preliminary results about the Cauchy problem \eqref{INLS}. Next in Section $5$, we recall some properties of ground state and show the existence of wave operator. In Section $6$, we construct a critical solution denoted by $u_c$ and show some of its properties (the key ingredient in this step is a profile decomposition result related to the linear flow). Finally, Section $7$ is devoted to the rigidity theorem.

\section{Notation and preliminaries}

\ Let us start this section by introducing the notation used throughout the paper. We use $c$ to denote various constants that may vary line by line. Given any positive numbers $a$ and $b$, the notation $a\lesssim b$ means that there exists a positive constant $c$ that $a\leq cb$, with $c$ uniform with respect to the set where a and b vary. Let a set $A\subset \mathbb{R}^3$, $A^C=\mathbb{R}^N \backslash A$ denotes the complement of $A$. Given $x,y \in \mathbb{R}^3$, $x\cdot y$ denotes the inner product of $x$ and $y$ in $\mathbb{R}^3$. 
	
\ We use $\|.\|_{L^p}$ to denote the $L^p(\mathbb{R}^3)$ norm with $p\geq 1$. If necessary, we use subscript to inform with variable we are concerned with. The mixed norms in the spaces $L^q_tL^r_x$ and $L^q_TL^r_x$ of $f(x,t)$ are defined, respectively, as 
$$
\|f\|_{L^q_tL^r_x}=\left(\int_{\mathbb{R}}\|f(t,.)\|^q_{L^r_x}dt\right)^{\frac{1}{q}}
$$
and
$$
\|f\|_{L^q_TL^r_x}=\left(\int_T^\infty\|f(t,.)\|^q_{L^r_x}dt\right)^{\frac{1}{q}}
$$
with the usual modifications when $q=\infty$ or $r=\infty$.
	
\ For $s\in \mathbb{R}$, $J^s$ and $D^s$ denote the Bessel and the Riesz potentials of order $s$, given via Fourier transform by the formulas
	$$
	\widehat{J^s f}=(1+|y|^2)^{\frac{s}{2}}\widehat{f}\;\;\;\textnormal{and} \;\;\;\;\widehat{D^sf}=|y|^s\widehat{f},
	$$
	where the Fourier transform of $f(x)$ is given by
	$$
	\widehat{f}(y)=\int_{\mathbb{R}^3}e^{ix.y}f(x)dx.
	$$
On the other hand, we define the norm of the Sobolev spaces $H^{s,r}(\mathbb{R}^3)$ and $\dot{H}^{s,r}(\mathbb{R}^3)$, respectively, by
$$
\|f\|_{H^{s,r}}:=\|J^sf\|_{L^r}\;\;\;\;\textnormal{and}\;\;\;\;\|f\|_{\dot{H}^{s,r}}:=\|D^sf\|_{L^r}.
$$
If $r=2$ we denote $H^{s,2}=H^s$ and $\dot{H}^{s,2}=\dot{H}^s$.

\ Next, we recall some Strichartz type estimates associated to the linear Schr\"odinger propagator.\\
\textbf{Strichartz type estimates.} We say the pair $(q,r)$ is $L^2$-admissible or simply admissible par if they satisfy the condition
\begin{equation}\label{L2admissivel}
\frac{2}{q}=\frac{3}{2}-\frac{3}{r},
\end{equation}
where $2\leq  r  \leq 6$. 
We also called the pair $\dot{H}^s$-admissible if\footnote{It worth mentioning that, the pair $\left(\infty,\frac{6}{3-2s}\right)$ also satisfies the relation \eqref{Hsadmissivel}, however, in our work we will not make use of this pair when we estimate the nonlinearity $|x|^{-b} |u|^2 u$.}
\begin{equation}\label{Hsadmissivel}%cond par adm
\frac{2}{q}=\frac{3}{2}-\frac{3}{r}-s,
\end{equation}
where $\frac{6}{3-2s}\leq  r  \leq 6^{-}$. Here, $a^-$ is a fixed number slightly smaller than a ($a^-=a-\varepsilon$ with $\varepsilon>0$ small enough) and, in a similar way, we define $a^+$. Finally we say that $(q,r)$ is $\dot{H}^{-s}$-admissible if 
$$
\frac{2}{q}=\frac{3}{2}-\frac{3}{r}+s,
$$
where $\left(\frac{6}{3-2s}\right)^{+}\leq  r  \leq 6^{-}$.

\  Given $s\in \mathbb{R}$, we use the set $\mathcal{A}_s=\{(q,r);\; (q,r)\; \textnormal{is} \;\dot{H}^s\textnormal{-admissible}\}$ to define the Strichartz norm
$$
\|u\|_{S(\dot{H}^{s})}=\sup_{(q,r)\in \mathcal{A}_{s}}\|u\|_{L^q_tL^r_x}. 
$$
In the same way, the dual Strichartz norm is given by
$$
\|u\|_{S'(\dot{H}^{-s})}=\inf_{(q,r)\in \mathcal{A}_{-s}}\|u\|_{L^{q'}_tL^{r'}_x},
$$
where $(q',r')$ is such that $\frac{1}{q}+\frac{1}{q'}=1$ and $\frac{1}{r}+\frac{1}{r'}=1$ for $(q,r)\in \mathcal{A}_s$.

\ Note that, if $s=0$ then $\mathcal{A}_0$ is the set of all $L^2$-admissible pairs. Moreover, if $s=0$, $S(\dot{H}^0)=S(L^2)$ and $S'(\dot{H}^{0})=S'(L^2)$. We write $S(\dot{H}^s)$ or $S'(\dot{H}^{-s})$ if the mixed norm is evaluated over $\mathbb{R}\times\mathbb{R}^3$. To indicate a restriction to a time interval $I\subset (-\infty,\infty)$ and a subset $A$ of $\mathbb{R}^3$, we use the notations $S(\dot{H}^s(A);I)$ and $S'(\dot{H}^{-s}(A);I)$. 

\ The next lemmas provide some inequalities that will be useful in our work.  
	%\ We now recall some useful inequalities.
\begin{lemma}\label{ILE}  %Important linear estimate
If $t\neq 0$, $\frac{1}{p}+\frac{1}{p'}=1$ and $p'\in[1,2]$, then $U(t):L^{p'}(\mathbb{R}^3)\rightarrow L^p(\mathbb{R}^3)$ is continuous and 
$$
\|U(t)f\|_{L^p_x}\lesssim|t|^{-\frac{3}{2}\left(\frac{1}{p'}-\frac{1}{p}\right)}\|f\|_{L^{p'}}.
$$
\begin{proof} See Linares-Ponce \cite[Lemma $4.1$]{FELGUS}.
\end{proof}
 \end{lemma}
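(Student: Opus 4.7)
The plan is to prove the dispersive estimate by interpolating between two endpoint bounds for the free Schrödinger group, which is the classical route. First I would record the explicit kernel representation: for $t\neq 0$ the solution of the free Schrödinger equation can be written as $U(t)f = K_t\ast f$ where $K_t(x) = (4\pi i t)^{-3/2}e^{i|x|^2/(4t)}$ (in dimension three). This is obtained either from the Fourier transform of $e^{-it|\xi|^2}$ or by a stationary phase/completing-the-square argument.

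Next I would establish the two endpoint cases. The $L^2\to L^2$ bound is immediate from Plancherel because $U(t)$ has Fourier multiplier $e^{-it|\xi|^2}$ of modulus one, so $\|U(t)f\|_{L^2}=\|f\|_{L^2}$; this is the case $p'=2$, where the claimed exponent $-\tfrac{3}{2}(\tfrac{1}{p'}-\tfrac{1}{p})$ equals zero. The $L^1\to L^\infty$ bound follows from Young's convolution inequality together with the pointwise estimate $|K_t(x)|\le (4\pi|t|)^{-3/2}$, giving
\begin{equation*}
\|U(t)f\|_{L^\infty_x}\le\|K_t\|_{L^\infty_x}\|f\|_{L^1_x}\lesssim |t|^{-3/2}\|f\|_{L^1_x},
\end{equation*}
which is the case $p'=1$, matching the claimed exponent $-3/2$.

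Finally, I would apply the Riesz--Thorin interpolation theorem to the linear operator $U(t)$ between the two endpoints $(p',p)=(2,2)$ and $(p',p)=(1,\infty)$. Writing $\tfrac{1}{p'}=\tfrac{1-\theta}{2}+\tfrac{\theta}{1}$ for $\theta\in[0,1]$, which gives $\tfrac{1}{p}=\tfrac{1-\theta}{2}$ and hence $\tfrac{1}{p'}-\tfrac{1}{p}=\theta$ together with the dual exponent relation $\tfrac{1}{p}+\tfrac{1}{p'}=1$, interpolation yields
\begin{equation*}
\|U(t)f\|_{L^p_x}\lesssim |t|^{-\frac{3}{2}\theta}\|f\|_{L^{p'}_x}=|t|^{-\frac{3}{2}\left(\frac{1}{p'}-\frac{1}{p}\right)}\|f\|_{L^{p'}_x},
\end{equation*}
valid for all $p'\in[1,2]$. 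Continuity of $U(t)$ as a map $L^{p'}\to L^p$ then follows from this bound.

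There is no real obstacle here: the only mild subtlety is the derivation of the kernel formula (which is standard and can be cited directly from Linares--Ponce as the authors do) and making sure the endpoint $p'=1$ is handled at the level of distributions so that $K_t\ast f$ actually makes sense; once those points are understood, the proof is just the Plancherel/kernel-bound/Riesz--Thorin sandwich.
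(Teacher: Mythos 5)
Your proof is correct and is exactly the standard argument (kernel formula, Plancherel at $L^2\to L^2$, kernel bound at $L^1\to L^\infty$, Riesz--Thorin) that the paper outsources to Linares--Ponce, Lemma 4.1; the paper itself gives no proof beyond that citation. The interpolation bookkeeping $\tfrac{1}{p'}-\tfrac{1}{p}=\theta$ is verified correctly, so nothing further is needed.
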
 
 \begin{lemma}\textbf{(Sobolev embedding)}\label{SI} Let $1\leq p<+\infty$. If $s\in \left(0,\frac{3}{2}\right)$ then $H^{s}(\mathbb{R}^3)$ is continuously embedded in $L^r(\mathbb{R}^3)$ where $s=\frac{3}{p}-\frac{3}{r}$. Moreover, 
\begin{equation}\label{SEI} % SEI=sobol embedd ineq.
\|f\|_{L^r}\leq c\|D^sf\|_{L^{2}}.
\end{equation}
\begin{proof} See Linares-Ponce \cite[Theorem $3.3$]{FELGUS}. 
\end{proof}
\end{lemma}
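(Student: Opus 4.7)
My plan is to deduce the embedding from the Hardy--Littlewood--Sobolev (HLS) inequality via the Riesz-potential representation of $f$. In $\mathbb{R}^3$ the Riesz potential $I_s := D^{-s}$ is convolution with a constant multiple of $|x|^{s-3}$, so every Schwartz function (and by density any $f$ with $D^s f \in L^2$) satisfies $f = I_s(D^s f)$. The argument then proceeds in two steps: apply HLS to get the homogeneous estimate $\|f\|_{L^r} \lesssim \|D^s f\|_{L^2}$, then bridge to the inhomogeneous norm using the trivial Bessel/Riesz comparison.

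The main step is HLS: convolution with $|x|^{s-3}$ maps $L^p(\mathbb{R}^3)$ boundedly into $L^r(\mathbb{R}^3)$ whenever $1 < p < r < \infty$ and $\tfrac{1}{r} = \tfrac{1}{p} - \tfrac{s}{3}$. Specializing to $p = 2$ and applying the bound to $D^s f$ gives
\[
\|f\|_{L^r} \;=\; \|I_s(D^s f)\|_{L^r} \;\leq\; c\,\|D^s f\|_{L^2},
\]
which is exactly \eqref{SEI}. The hypothesis $s \in (0, 3/2)$ is precisely what keeps the Lebesgue exponent $r = 6/(3-2s)$ in the range $(2, \infty)$ and matches the scaling relation $s = 3/p - 3/r$ in the statement when $p = 2$. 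To promote this homogeneous estimate to the inhomogeneous embedding $H^s \hookrightarrow L^r$, I would use the pointwise Fourier-side inequality $|\xi|^s \leq (1+|\xi|^2)^{s/2}$, which via Plancherel gives $\|D^s f\|_{L^2} \leq \|J^s f\|_{L^2} = \|f\|_{H^s}$; chaining with the displayed bound produces the continuous embedding.

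The main obstacle is HLS itself, the deepest ingredient. A standard route is to split $|x|^{s-3}$ into near- and far-zone pieces $|x| \leq \lambda$ and $|x| > \lambda$, estimate the two convolutions by Young's and Chebyshev's inequalities, optimize in $\lambda$ to obtain a weak-type $(p, r)$ bound, and then upgrade via Marcinkiewicz interpolation; the endpoint $p = 1$ must be excluded, but for $p = 2$ this causes no trouble. An alternative route, closer in spirit to the Littlewood--Paley/Strichartz framework deployed later in this paper, is to decompose $f = \sum_j P_j f$ dyadically in frequency, apply Bernstein's inequality $\|P_j f\|_{L^r} \lesssim 2^{3j(1/2 - 1/r)} \|P_j f\|_{L^2}$ to each piece, and sum the resulting geometric series using the Plancherel-type identity $\|D^s f\|_{L^2}^2 \sim \sum_j 2^{2sj} \|P_j f\|_{L^2}^2$ together with the condition $s > 0$ that guarantees convergence.
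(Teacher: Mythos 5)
Your argument is correct and is essentially the proof the paper delegates to Linares--Ponce [Theorem 3.3]: there the embedding is likewise obtained by writing $f = I_s(D^s f)$ and invoking the Hardy--Littlewood--Sobolev theorem on fractional integration, with the passage from $\dot H^s$ to $H^s$ handled by the trivial symbol comparison $|\xi|^s \le (1+|\xi|^2)^{s/2}$. Both of your suggested routes to HLS (kernel splitting plus Marcinkiewicz, or Littlewood--Paley with Bernstein) are standard and sound, so the proposal fills in the cited proof faithfully.
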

\begin{remark}\label{SEI2} Using Lemma \ref{SI} we have that $H^s(\mathbb{R}^3)$ is continuously embedded in $L^r(\mathbb{R}^3)$ and 
\begin{equation}\label{SEI22} % SEI=sobol embedd ineq.
\|f\|_{L^r}\leq c\|f\|_{H^{s}},
\end{equation}    
where $r\in[2,\frac{6}{3-2s}]$.
\end{remark}
%\begin{lemma}\textbf{(Fractional product rule)}\label{PRODUCTRULE}
%Let $s\in (0, 1]$ and $1 < r, r_1, r_2, p_1, p_2 <+\infty$ are such that $\frac{1}{r}=\frac{1}{r_i}+\frac{1}{p_i}$ for $i=1,2$. Then,
%$$
%\| D^s (fg) \|_{L^r} \leq c \|f\|_{L^{r_1}}\|D^s g\|_{L^{p_1}}+c \|D^sf\|_{L^{r_2}}\| g\|_{L^{p_2}}.
%$$
%\begin{proof} See Kenig-Ponce-Vega \cite{KPV}.
%\end{proof}
%\end{lemma}
%\begin{lemma}\textbf{(Fractional chain rule)}\label{CHAINRULE}
%Suppose $G\in C^1(\mathbb{C})$, $s\in (0, 1]$, and $1 < r, r_1, r_2 <+\infty$ are such that $\frac{1}{r}=\frac{1}{r_1}+\frac{1}{r_2}$. Then,
%$$
%\| D^s G(u) \|_{L^r} \leq c \|G^{'}(u)\|_{L^{r_1}}\|D^s u\|_{L^{r_2}}.
%$$
%\begin{proof} See Kenig-Ponce-Vega \cite{KPV}.
%\end{proof}
%\end{lemma}
	
 \ Next we list the well-known Strichartz estimates we are going to use in this work. We refer the reader to Linares-Ponce \cite{FELGUS} and Kato \cite{KATO} for detailed proofs of what follows (see also Holmer-Roudenko \cite{HOLROU} and Guevara \cite{GUEVARA}).
\begin{lemma}\label{Lemma-Str} The following statements hold.
 \begin{itemize}
\item [(i)] (Linear estimates).
\begin{equation}\label{SE1}
\| U(t)f \|_{S(L^2)} \leq c\|f\|_{L^2},
\end{equation}
\begin{equation}\label{SE2}
\|  U(t)f \|_{S(\dot{H}^s)} \leq c\|f\|_{\dot{H}^s}.
\end{equation}
\item[(ii)] (Inhomogeneous estimates).
\begin{equation}\label{SE3}					 
\left \| \int_{\mathbb{R}} U(t-t')g(.,t') dt' \right\|_{S(L^2)}\;+\; \left \| \int_{0}^t U(t-t')g(.,t') dt' \right \|_{S(L^2) } \leq c\|g\|_{S'(L^2)},
\end{equation}
%\begin{equation*}\label{SE4}
%\left \| \int_{0}^t U(t-t')g(.,t') dt' \right\|_{S(\dot{H}^s) } \leq c\|D^sg\|_{S'(L^2)},
%\end{equation*}
\begin{equation}\label{SE5}
\left \| \int_{0}^t U(t-t')g(.,t') dt' \right \|_{S(\dot{H}^s) } \leq c\|g\|_{S'(\dot{H}^{-s})}.
\end{equation}
\end{itemize}
\end{lemma}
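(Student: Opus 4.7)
The plan is to prove the $L^2$-admissible linear estimate (SE1) via the classical $TT^*$ argument, reduce the $\dot{H}^s$-admissible linear estimate (SE2) to (SE1) by a Sobolev embedding trick, and then derive the inhomogeneous estimates of (ii) by duality combined with a Christ--Kiselev truncation for the retarded kernel.

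For (SE1), define $T: L^2_x \to L^q_t L^r_x$ by $Tf(t,x) = (U(t)f)(x)$ for an $L^2$-admissible pair $(q,r)$. Its formal adjoint is $T^*F = \int_{\mathbb R} U(-s)F(\cdot,s)\,ds$, and the composition is $TT^*F(t,\cdot) = \int_{\mathbb R} U(t-s)F(\cdot,s)\,ds$. Lemma \ref{ILE} with $p=r$, $p'=r'$ gives $\|U(t-s)F(s)\|_{L^r_x} \lesssim |t-s|^{-3(\frac12-\frac1r)} \|F(s)\|_{L^{r'}_x}$; the $L^2$-admissibility condition \eqref{L2admissivel} identifies the exponent in the weight with $2/q$, and in the paper's non-endpoint range $r<6$ we have $0\le 2/q<1$, so the one-dimensional Hardy--Littlewood--Sobolev inequality in time yields $\|TT^*F\|_{L^q_tL^r_x} \lesssim \|F\|_{L^{q'}_tL^{r'}_x}$. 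By the standard $TT^*$ identity this is equivalent to $\|Tf\|_{L^q_tL^r_x} \lesssim \|f\|_{L^2}$, and taking the supremum over admissible pairs produces (SE1).

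For (SE2), let $(q,r)$ be $\dot H^s$-admissible and choose $\tilde r$ with $s = \frac{3}{\tilde r} - \frac{3}{r}$; then \eqref{Hsadmissivel} forces $(q,\tilde r)$ to be $L^2$-admissible, and by Lemma \ref{SI} one has $\|g\|_{L^r_x} \lesssim \|D^s g\|_{L^{\tilde r}_x}$. Since $D^s$ commutes with $U(t)$,
\[
\|U(t)f\|_{L^q_t L^r_x} \lesssim \|D^s U(t)f\|_{L^q_t L^{\tilde r}_x} = \|U(t)D^s f\|_{L^q_t L^{\tilde r}_x} \lesssim \|D^s f\|_{L^2} = \|f\|_{\dot H^s},
\]
by the already-established (SE1); taking the supremum gives (SE2). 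For the inhomogeneous pieces, the full-line bound $\|\int_{\mathbb R} U(t-s)g(s)\,ds\|_{S(L^2)} \lesssim \|g\|_{S'(L^2)}$ is exactly $TT^*$ between two $L^2$-admissible pairs (one for the $T$ and one for the $T^*$ leg), and the retarded version $\int_0^t U(t-s)g\,ds$ follows from the non-retarded one by the Christ--Kiselev lemma, whose hypothesis $q>q'$ holds throughout the non-endpoint admissible range. Finally, (SE5) is obtained by combining the dual version of (SE2), namely $\|\int U(-s)g(s)\,ds\|_{\dot H^s} \lesssim \|g\|_{S'(\dot H^{-s})}$, with one more application of (SE2) and Christ--Kiselev.

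The main technical point is bookkeeping in (SE2): one must check that the auxiliary exponent $\tilde r$ produced by Sobolev embedding does land in the $L^2$-admissible window $[2,6)$ for every $\dot H^s$-admissible pair inside the prescribed range $\frac{6}{3-2s}\le r \le 6^-$; this is precisely why the upper and lower endpoints are excluded in the paper's definition. The secondary subtlety is the verification of $q>q'$ for Christ--Kiselev when one pair is $\dot H^s$- and the other $\dot H^{-s}$-admissible in (SE5), which again is guaranteed by the exclusion of the forbidden endpoints. No deeper ingredient beyond the dispersive estimate (Lemma \ref{ILE}), Sobolev embedding (Lemma \ref{SI}), Hardy--Littlewood--Sobolev and Christ--Kiselev is needed.
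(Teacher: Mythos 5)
The paper offers no proof of this lemma (it only cites Linares--Ponce, Kato, Holmer--Roudenko and Guevara), so there is nothing internal to compare against; your treatment of \eqref{SE1}, \eqref{SE2} and \eqref{SE3} is the standard $TT^*$/Sobolev/Christ--Kiselev argument and is essentially correct. One caveat: the paper's $L^2$-admissible range is $2\le r\le 6$ \emph{including} the endpoint $(q,r)=(2,6)$, which is actually used later (the $N_2$ term in Lemma \ref{LG3} is estimated in $L^{2'}_tL^{6'}_x$). Your HLS argument degenerates at $\lambda=2/q=1$ and Christ--Kiselev needs $\tilde q'<q$, so the double-endpoint case must be quoted from Keel--Tao rather than derived; your remark that ``the paper's non-endpoint range $r<6$'' is not accurate as a description of $\mathcal{A}_0$.

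The genuine gap is in your derivation of \eqref{SE5}. The estimate you call ``the dual version of \eqref{SE2}'', namely $\left\|\int U(-s)g(s)\,ds\right\|_{\dot H^{s}}\lesssim \|g\|_{S'(\dot H^{-s})}$, is not the dual of \eqref{SE2}: the dual of \eqref{SE2} lands in $\dot H^{-s}$ with $g$ measured in the dual of an $\mathcal A_{s}$ norm. What you actually need is, by duality, equivalent to the homogeneous estimate $\|U(t)f\|_{L^{\tilde q}_tL^{\tilde r}_x}\lesssim \|f\|_{\dot H^{-s}}$ for $(\tilde q,\tilde r)\in\mathcal A_{-s}$, and this is false for $s>0$: a Galilean boost $f\mapsto e^{iv\cdot x}f$ leaves $\|U(t)f\|_{L^{\tilde q}_tL^{\tilde r}_x}$ unchanged while $\|e^{iv\cdot x}f\|_{\dot H^{-s}}\to 0$ as $|v|\to\infty$. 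Equivalently, your Sobolev reduction would require the reverse embedding $\|h\|_{L^{\tilde r}}\lesssim\|D^{-s}h\|_{L^{\rho}}$ with $\rho<\tilde r$, i.e.\ a gain of $s$ derivatives, which fails. This is exactly why the non-admissible inhomogeneous estimates do not follow from the homogeneous ones by factorization through a fixed-time Sobolev space plus Christ--Kiselev. The correct proof of \eqref{SE5} works directly on the bilinear form $\iint_{s<t}\langle U(t-s)g(s),h(t)\rangle\,ds\,dt$: Hölder in $x$, the dispersive estimate of Lemma \ref{ILE}, and HLS in time close the argument when the two spatial exponents coincide (which covers the pairs $(\widehat a,\widehat r)\in\mathcal A_{s_c}$ and $(\widetilde a,\widehat r)\in\mathcal A_{-s_c}$ actually used in Lemma \ref{LG1}), while the general case requires the interpolation machinery of Kato/Foschi/Vilela --- precisely the references the paper points to.
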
 

\ We end this section with three important remarks. 	
\begin{remark}\label{nonlinerity}
Let $F(x,z)=|x|^{-b}|z|^2 z$, and $f(z)=|z|^2 z$. The complex derivative of $f$ is $f_z(z)=2|z|^2$ and $f_{\bar{z}}(z)=z^2$.
%\begin{equation*}\label{nonli1}
%f_z(z)=2|z|^2\;\;\;\;\;\textnormal{and}\;\;\;\; f_{\bar{z}}    %(z)=z^2. 
%\end{equation*}
For $z,w\in \mathbb{C}$, we have 
\begin{equation*}
 f(z)-f(w)=\int_{0}^{1}\left[f_z(w+\theta(z-w))(z-w)+f_{\bar{z}}(w+\theta(z-w))\overline{(z-w)}\right]d\theta.
\end{equation*}
Thus,
\begin{equation}\label{FEI}% first elementary inequality
 |F(x,z)-F(x,w)|\lesssim |x|^{-b}\left( |z|^2+ |w|^2 \right)|z-w|.
\end{equation}

\ Now we are interested in estimating $\nabla \left( F(x,z)-F(x,w) \right)$. A simple computation gives  
\begin{equation}\label{NONLI11}
\nabla F(x,z)=\nabla(|x|^{-b})f(z)+|x|^{-b} \nabla f(z)
\end{equation}
where $\nabla f(z)=f'(z)\nabla z=f_z(z)\nabla z+f_{\bar{z}}(z) \overline{\nabla z}$.\\
%\begin{equation*}\label{NONLI22}
%\nabla f(z)=f'(z)\nabla z=f_z(z)\nabla z+f_{\bar{z}}(z) \overline{\nabla z}.
%\end{equation*}
First we estimate $|\nabla (f(z)-f(w))|$. Note that
 \begin{equation}\label{NONLI55}
 \nabla (f(z)-f(w))=f'(z)(\nabla z-\nabla w)+(f'(z)-f'(w))\nabla w.
 \end{equation}
So, since  
\begin{equation*}\label{NONLI66}
|f_z(z)-f_z(w)|\;,\;| f_{\bar{z}}(z)-f_{\bar{z}}(w)|\lesssim (|z|+|w|)|z-w| 
\end{equation*}
%and 
%\begin{equation*}\label{NONLI77}
%| f_{\bar{z}}(z)-f_{\bar{z}}(w)|\lesssim (|z|+|w|)|z-w|,
%\end{equation*}
we get, by \eqref{NONLI55} 
\begin{equation*}\label{NONLI88}
|\nabla (f(z)-f(w))|\lesssim |z|^2|\nabla (z- w)|+(|z|+|w|)|\nabla w||z-w|.
\end{equation*}
Therefore, by \eqref{NONLI11}, \eqref{FEI} and the last two inequalities we obtain \vspace{0.1cm}
\begin{equation}\label{SECONDEI}
\left|\nabla \left(F(x,z)-F(x,w)\right)\right|\lesssim  |x|^{-b-1}(|z|^{2}+|w|^{2})|z-w|+|x|^{-b}|z|^2|\nabla (z- w)|+M, 
\end{equation}
where $M \lesssim  |x|^{-b}(|z|+|w|)|\nabla w||z-w|$. 
%\begin{eqnarray*}\label{NONLI100}
% M &\lesssim & |x|^{-b}(|z|+|w|)|\nabla w||z-w|. 
% \end{eqnarray*}
\end{remark}

\begin{remark}\label{RB} % RB=remark bola
Let $B=B(0,1)=\{ x\in \mathbb{R}^3;|x|\leq 1\}$ and $b>0$. If $x\in B^C$ then $|x|^{-b}<1$ and so
$$ 
\left	\||x|^{-b}f \right\|_{L^r_x}\leq \|f\|_{L_x^r(B^C)}+\left\||x|^{-b}f\right\|_{L_x^r(B)}. 
$$
\end{remark}

\ The next remark provides a condition for the integrability of $|x|^{-b}$ on $B$ and $B^C$. 
\begin{remark}\label{RIxb} %RIxb=remark integ of xb.
Note that if $\frac{3}{\gamma}-b>0$ then $\||x|^{-b}\|_{L^\gamma(B)}<+\infty$. Indeed 
\begin{equation*}
\int_{B}|x|^{-\gamma b}dx=c\int_{0}^{1}r^{-\gamma b}r^{2}dr=c_1 \left. r^{3-\gamma b} \right |_0^1<+\infty\;\;\textnormal{if}\;\;\frac{3}{\gamma} - b>0.
\end{equation*}
Similarly, we have that $\||x|^{-b}\|_{L^\gamma(B^C)}$ is finite if $\frac{3}{\gamma}- b<0$.
\end{remark}

\section{Sketch of the proof of Theorem \ref{SCATTERING}}\label{SPMR}
\ Similarly as in the NLS model, we have the following scattering criteria for global solution in $H^1(\mathbb{R}^3)$ (the proof will be given after Proposition \ref{GWPH1} below).
\begin{proposition}\label{SCATTERSH1}{\bf ($H^1$ scattering)} Let $0<b<1/2$. If $u(t)$ be  a global solution of \eqref{INLS} with initial data $u_0 \in  H^1(\mathbb{R}^3)$. If $\|u\|_{S(\dot{H}^{s_c})}< +\infty$ and $\sup\limits_{t\in \mathbb{R}}\|u(t)\|_{H^1_x}\leq B$, then $u(t)$ scatters in $H^1(\mathbb{R}^3)$ as $t \rightarrow \pm\infty$.
\end{proposition}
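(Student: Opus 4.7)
My plan follows the standard scattering criterion argument, adapted to handle the singular weight $|x|^{-b}$.

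First, I would partition the positive half-line. Choose a specific $\dot{H}^{s_c}$-admissible pair $(q_0,r_0)$ with $q_0<\infty$ (to be used in the nonlinear estimate below). From $\|u\|_{L^{q_0}_t L^{r_0}_x}\leq \|u\|_{S(\dot{H}^{s_c})}<\infty$, split $[0,\infty)=\bigcup_{j=1}^N I_j$ into finitely many consecutive intervals on each of which $\|u\|_{L^{q_0}_{I_j}L^{r_0}_x}<\eta$, for a small $\eta>0$ to be chosen below.

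Second, on each $I_j$ I would establish a nonlinear estimate of the form
$$
\bigl\|\,|x|^{-b}|u|^{2}u\,\bigr\|_{S'(L^2;I_j)} + \bigl\|\nabla(|x|^{-b}|u|^2u)\bigr\|_{S'(L^2;I_j)} \lesssim_B \eta^{\theta}\Bigl( \|u\|_{S(L^2;I_j)} + \|\nabla u\|_{S(L^2;I_j)} \Bigr),
$$
for some $\theta>0$, where the implicit constant depends on the $H^1$ bound $B$. The weight $|x|^{-b}$ is handled via the splitting of Remarks \ref{RB}--\ref{RIxb}: on $B^C$ one uses $|x|^{-b}\leq 1$ with ordinary Hölder; on the unit ball $B$ one uses Hölder with $|x|^{-b}\in L^\gamma(B)$ for an appropriate $\gamma$, together with the Sobolev embedding of Lemma \ref{SI} and the uniform bound $\|u(t)\|_{H^1_x}\leq B$ to control the remaining factor of $u$. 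Combining this with the Strichartz inequality applied to the Duhamel formula on $I_j$, namely
$$
\|u\|_{S(L^2;I_j)}+\|\nabla u\|_{S(L^2;I_j)} \lesssim B + \bigl\|\,|x|^{-b}|u|^{2}u\,\bigr\|_{S'(L^2;I_j)} + \bigl\|\nabla(|x|^{-b}|u|^2u)\bigr\|_{S'(L^2;I_j)},
$$
and taking $\eta$ small enough to absorb the nonlinear term, I obtain $\|u\|_{S(L^2;I_j)}+\|\nabla u\|_{S(L^2;I_j)}\lesssim_B 1$. Summing over the $N$ intervals yields $\|u\|_{S(L^2)}+\|\nabla u\|_{S(L^2)}<\infty$ on $[0,\infty)$.

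Third, I would define the candidate asymptotic state
$$
\phi^+ := u_0 + i\int_0^\infty U(-s)\bigl(|x|^{-b}|u(s)|^2 u(s)\bigr)\,ds.
$$
Applying the dual Strichartz estimate together with the nonlinear bound of the previous step to the interval $[t,\infty)$, the tail
$$
\Bigl\|\int_t^\infty U(-s)\bigl(|x|^{-b}|u|^{2}u\bigr)\,ds\Bigr\|_{H^1}
$$
is dominated by $\bigl\|\,|x|^{-b}|u|^{2}u\,\bigr\|_{S'(L^2;[t,\infty))} + \bigl\|\nabla(|x|^{-b}|u|^2u)\bigr\|_{S'(L^2;[t,\infty))}$, which tends to $0$ as $t\to\infty$ because the $L^{q_0}_{[t,\infty)}L^{r_0}_x$ tail of $u$ vanishes. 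This shows $\phi^+\in H^1$, and writing $u(t)-U(t)\phi^+=-i\int_t^\infty U(t-s)(|x|^{-b}|u|^{2}u)\,ds$ and taking $H^1$ norms yields $\|u(t)-U(t)\phi^+\|_{H^1}\to 0$. Backward scattering follows from the time-reversal symmetry $u(t,x)\mapsto\overline{u(-t,x)}$, which preserves \eqref{INLS} because $|x|^{-b}$ is real.

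The main technical obstacle is the nonlinear estimate in the second step. The derivative falling on $|x|^{-b}$ produces the more singular term $|x|^{-b-1}(|z|^{2}+|w|^{2})|z-w|$ in \eqref{SECONDEI}, which must be absorbed either via Hardy's inequality or by a second ball-splitting with a slightly different Hölder exponent $\tilde\gamma$. Verifying that the exponents $\gamma$ and $\tilde\gamma$ simultaneously satisfy the integrability conditions of Remark \ref{RIxb} while producing legitimate Strichartz admissible pairs is the delicate book-keeping; this is exactly where the restriction $0<b<1/2$ enters.
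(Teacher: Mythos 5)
Your proposal is correct and follows essentially the same route as the paper: partition $[0,\infty)$ so the $\dot H^{s_c}$-Strichartz norm of $u$ is small on each piece, use the weighted Hölder/ball-splitting estimates (the paper's Lemmas \ref{LG1} and \ref{LG3}) together with the uniform $H^1$ bound to close the Duhamel estimate and obtain $\|u\|_{S(L^2)}+\|\nabla u\|_{S(L^2)}<\infty$, then construct $\phi^{\pm}$ by Cook's method and let the vanishing tail of the $\dot H^{s_c}$ norm give the convergence. You also correctly locate where $b<1/2$ enters, namely in controlling the $|x|^{-b-1}$ term produced when the derivative hits the weight.
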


\ Let $u(t)$ be the corresponding $H^1$ solution for the Cauchy problem \eqref{INLS} with radial data $u_0\in H^1(\mathbb{R}^3)$ satisfying \eqref{EMC} and \eqref{GFC}. We already know by Theorem \ref{TG} that the solution is globally defined and $\sup\limits_{t\in \mathbb{R}}\|u(t)\|_{H^1}< \infty$. So, in view of Proposition \ref{SCATTERSH1}, our goal is to show that (recalling $s_c=\frac{1+b}{2}$)
\begin{equation}\label{HsFINITE} %HsFINITE=NORMA FINITA
\|u\|_{S(\dot{H}^{s_c})}<+\infty.
\end{equation}
The technique employed here to achieve the scattering property \eqref{HsFINITE} combines the concentration-compactness and rigidity ideas introduced by Kenig-Merle \cite{KENIG}. It is also based on the works of Holmer-Roudenko \cite{HOLROU} and Duyckaerts-Holmer-Roudenko \cite{DUCHOLROU}. We describe it in the sequel, but first we need some preliminary definitions.
\begin{definition}
We shall say that SC($u_0$) holds if the solution $u(t)$ with initial data $u_0\in H^1(\mathbb{R}^3)$ is global and \eqref{HsFINITE} holds.
\end{definition}
\begin{definition}
For each $\delta > 0$ define the set $A_\delta$ to be the collection of all initial data in $H^1(\mathbb{R}^3)$ satisfying
\begin{align*}
A_\delta=\{u_0\in H^1:E[u_0]^{s_c}M[u_0]^{1-s_c}<\delta\;\textnormal{and}\;\|\nabla u_0\|^{s_c}_{L^2}\| u_0\|^{1-s_c}_{L^2}<\|\nabla Q\|^{s_c}_{L^2}\| Q\|^{1-s_c}_{L^2} \}
\end{align*}
and define
\begin{equation}\label{deltac}
\delta_c=\sup \{\; \delta>0:\; u_0\; \in A_\delta\;  \Longrightarrow SC(u_0)\; \textnormal{holds} \}=\sup_{\delta>0} B_\delta.
\end{equation}
\end{definition}
Note that $B_\delta \neq \emptyset$. In fact, applying the Strichartz estimate \eqref{SE2}, interpolation and Lemma \ref{LGS} (i) below, we obtain
  \begin{eqnarray*}
  \|U(t)u_0\|_{S(\dot{H}^{s_c})}&\leq& c\|u_0\|_{\dot{H}^{s_c}}\leq c\|\nabla u_0\|^{s_c}_{L^2}\|u_0\|^{1-s_c}_{L^2}\\
  &\leq& c\left(\frac{3+b}{ s_c}\right)^{\frac{s_c}{2}}E[u_0]^{\frac{s_c}{2}}M[u_0]^{\frac{1-s_c}{2}}.
  \end{eqnarray*}
 So if $u_0\in A_\delta$ then $E[u_0]^{s_c}M[u_0]^{1-s_c}<\left(\frac{ s_c}{3+2b}\right)^{s_c}\delta'^2$,
  %$$
 % E[u_0]^{s_c}M[u_0]^{1-s_c}<\left(\frac{\alpha s_c}{6+2b}\right)^{s_c}\delta'^2,
 % $$
 which implies $\|U(t)u_0\|_{S(\dot{H}^{s_c})}\leq c\delta'$.
  %$$
  %\|U(t)u_0\|_{S(\dot{H}^{s_c})}\leq c\delta'.
  %$$
 Then, by the small data theory (Proposition \ref{GWPH1} below) we have that $SC(u_0)$ holds for $\delta'>0$ small enough. 
 
 \ Next, we sketch the proof of Theorem \ref{SCATTERING}. If $\delta_c\geq E[Q]^{s_c}M[Q]^{1-s_c}$ then we are done. Assume now, by contradiction, that $\delta_c<E[Q]^{s_c}M[Q]^{1-s_c}$. Therefore, there exists a sequence of radial solutions $u_n$ to \eqref{INLS} with $H^1$ initial data $u_{n,0}$ (rescale all of them to have $\|u_{n,0}\|_{L^2} = 1$ for all $n$) such that\footnote{We can rescale $u_{n,0}$ such that $\|u_{n,0}\|_{L^2} = 1$. Indeed, if $u^\lambda_{n,0}(x)=\lambda^{\frac{2-b}{2}}u_{n,0}(\lambda x)$ then by \eqref{QI2} we have $E[u^\lambda_{n,0}]^{s_c}M[u^\lambda_{n,0}]^{1-s_c}<E[Q]^{s_c}M[Q]^{1-s_c}$ and $\|\nabla u^\lambda_{n,0}\|^{s_c}_{L^2}\| u^\lambda_{n,0}\|^{1-s_c}_{L^2}<\|\nabla Q\|^{s_c}_{L^2}\| Q\|^{1-s_c}_{L^2} $. Moreover, since $\|u^\lambda_{n,0}\|_{L^2} = \lambda^{-s_c}\|u_{n,0}\|_{L^2}$ by \eqref{QI1}, setting $\lambda^{s_c}=\|u_{n,0}\|_{L^2}$ we have $\|u^\lambda_{n,0}\|_{L^2} = 1$.}   %\footnote{By \eqref{QI1} and \eqref{QI2} we can rescale $u_n$ such that $\|u_n\|_{L^2} = 1$ } 
\begin{equation}\label{CC0}   %   CC=critical condition
\|\nabla u_{n,0}\|^{s_c}_{L^2} <  \|\nabla Q\|^{s_c}_{L^2}\|Q\|^{1-s_c}_{L^2}
\end{equation}
and
\begin{equation*}
E[u_n]^{s_c} \searrow \delta_c\; \textnormal{as}\; n \rightarrow +\infty,
\end{equation*} 	
for which SC($u_{n,0}$) does not hold for any $n\in \mathbb{R}^3$. However, we already know by Theorem \ref{TG} that $u_n$ is globally defined. Hence, we must have $\|u_n\|_{S(\dot{H}^{s_c})}=+\infty$. Then using a profile decomposition result (see Proposition \ref{LPD} below) on the sequence $\{u_{n,0}\}_{n\in \mathbb{N}}$ we can construct a critical solution of \eqref{INLS}, denoted by $u_c$, that lies exactly at the threshold $\delta_c$, satisfies \eqref{CC0} (therefore $u_c$ is globally defined again by Theorem \ref{TG}) and $\|u_c\|_{S(\dot{H}^{s_c})}=+\infty$ (see Proposition \ref{ECS} below). On the other hand, we prove that the critical solution $u_c$ has the property that $K=\{u_c(t):t\in[0,+\infty)\}$ is precompact in $H^1(\mathbb{R}^3)$ (see Proposition \ref{PSC} below). Finally, the rigidity theorem (Theorem \ref{RT} below) will imply that such critical solution is identically zero, which contradicts the fact that $\|u_c\|_{S(\dot{H}^{s_c})}=+\infty$.

\section{Cauchy Problem}

\ In this section we show a miscellaneous of results for the Cauchy problem \eqref{INLS}. These results will be useful in the next sections. We start stating the following two lemmas. To this end, we use the following numbers
\begin{equation}\label{PA1}  %pares Hs admissiveis1
\widehat{q}=\frac{4(4-\theta)}{6+2b-\theta(1+b)},\;\;\;\widehat{r}\;=\;\frac{6(4-\theta)}{2(3-b)-\theta(2-b)},
\end{equation}
and
\begin{equation}\label{PA2}
\widetilde{a}\;=\;\frac{2(4-\theta)}{(7+2b-3\theta)-(2-b)(1-\theta)},\;\;\;  \widehat{a}=\frac{2(4-\theta)}{1-b}.
\end{equation}
It is easy to see that $(\widehat{q},\widehat{r})$ is $L^2$-admissible, $(\widehat{a},\widehat{r})$ is $\dot{H}^{s_c}$-admissible and $(\widetilde{a},\widehat{r})$ is $\dot{H}^{-s_c}$-admissible.

\begin{lemma}\label{LG1} %lema global 1 
 Let $0<b<1$, then there exist $c>0$ and a positive number $\theta<2$ such that
\begin{itemize}
\item [(i)] $\left \||x|^{-b}|u|^2 v \right\|_{S'(\dot{H}^{-s_c})} \leq c \| u\|^{\theta}_{L^\infty_tH^1_x}\|u\|^{2-\theta}_{S(\dot{H}^{s_c})} \|v\|_{S(\dot{H}^{s_c})}$,
\item [(ii)] $\left\||x|^{-b}|u|^2 v \right\|_{S'(L^2)}\leq c\| u\|^{\theta}_{L^\infty_tH^1_x}\|u\|^{2-\theta}_{S(\dot{H}^{s_c})} \| v\|_{S(L^2)}$.
\end{itemize} 
\begin{proof} % See Guzm\'an \cite[Lemma $4.1$ and Lemma $4.2$, with $(N,\alpha,s)=(3,2,1)$]{CARLOS}.
(i) We divide the estimate in $B$ and $B^C$, indeed
$$
\left \||x|^{-b}|u|^2 v \right\|_{S'(\dot{H}^{-s_c})} \leq \left \||x|^{-b}|u|^2 v \right\|_{S'\left(\dot{H}^{-s_c}(B)\right)} + \left \||x|^{-b}|u|^2 v \right\|_{S'\left(\dot{H}^{-s_c}(B^C)\right)}. 
$$

\ We first consider the estimate on $B$. By the H\"older inequality we deduce
\begin{eqnarray}\label{LG1Hs1}
\left \|  |x|^{-b}|u|^2 v \right\|_{L^{\widehat{r}'}_x(B)} &\leq& \||x|^{-b}\|_{L^\gamma(B)}  \|u\|^{\theta}_{L^{\theta r_1}_x}   \|u\|^{2-\theta}_{L_x^{(2-\theta)r_2}}  \|v\|_{L^{\widehat{r}}_x} \nonumber  \\
&=&\||x|^{-b}\|_{L^\gamma(B)}  \|u\|^{\theta}_{L^{\theta r_1}_x}   \|u\|^{2-\theta}_{L_x^{\widehat{r}}} \|v\|_{L^{\widehat{r}}_x},
\end{eqnarray}
where
\begin{equation}\label{LG1Hs2}
\frac{1}{\widehat{r}'}=\frac{1}{\gamma}+\frac{1}{r_1}+\frac{1}{r_2}+\frac{1}{\widehat{r}}\;\;\textnormal{and}\;\;\widehat{r}=(2-\theta)r_2.
\end{equation}
In order to have the norm $\||x|^{-b}\|_{L^\gamma(B)}$ bounded we need $\frac{3}{\gamma}>b$ (see Remark \ref{RIxb}). In fact, observe that \eqref{LG1Hs2} implies
\begin{equation*}
\frac{3}{\gamma}=3-\frac{3(4-\theta)}{\widehat{r}}-\frac{3}{r_1},
\end{equation*}
and from \eqref{PA1} it follows that
\begin{equation}\label{LG1Hs3}
\frac{3}{\gamma}-b=\frac{\theta(2-b)}{2}-\frac{3}{r_1}.
\end{equation}
Choosing $r_1>1$ such that $\theta r_1=6$ we obtain $\frac{3}{\gamma}-b=\theta(1-b)>0$ since $b<1$, that is, $|x|^{-b}\in L^\gamma (B)$. Moreover, using the Sobolev embedding \eqref{SEI22} (with $s=1$) and \eqref{LG1Hs1} we get  
\begin{equation}\label{LG1Hs4}
\left \|  |x|^{-b}|u|^2 v \right\|_{L^{\widehat{r}'}_x(B)} \leq c\|u\|^{\theta}_{H^1_x}  \|u\|^{2-\theta}_{L_x^{\widehat{r}}} \|v\|_{L^{\widehat{r}}_x}.
\end{equation}
On the other hand, we claim that
\begin{equation}\label{LG1Hs41}
\left \|  |x|^{-b}|u|^2 v \right\|_{L^{\widehat{r}'}_x(B^C)} \leq c\|u\|^{\theta}_{H^1_x}  \|u\|^{2-\theta}_{L_x^{\widehat{r}}} \|v\|_{L^{\widehat{r}}_x}.
\end{equation}
Indeed, Arguing in the same way as before we deduce
\begin{eqnarray*}
\left \|  |x|^{-b}|u|^2 v \right\|_{L^{\widehat{r}'}_x(B^C)} \leq \||x|^{-b}\|_{L^\gamma(B^C)}  \|u\|^{\theta}_{L^{\theta r_1}_x}   \|u\|^{2-\theta}_{L_x^{\widehat{r}}} \|v\|_{L^{\widehat{r}}_x},
\end{eqnarray*}
where the relation \eqref{LG1Hs3} holds. By Remark \ref{RIxb}, to show that $\||x|^{-b}\|_{L^\gamma(B^C)}$ is finite we need to verify that $\frac{3}{\gamma}-b<0$.  Indeed, choosing $r_1>1$ such that $\theta r_1=2$ and using $\eqref{LG1Hs3}$ we have $\frac{3}{\gamma}-b=-\frac{\theta(1+b)}{2}$, which is negative. 
%In the first case, we choose $r_1$ such that
%\begin{equation}\label{L1naBC} %bola complementar
% \theta r_1\in\left(2,\frac{N\alpha}{2-b}\right)
%\end{equation}
Therefore the Sobolev inequality \eqref{SEI22} implies \eqref{LG1Hs41}. This completes the proof of the claim.

\ Now, inequalities \eqref{LG1Hs4} and \eqref{LG1Hs41} yield
\begin{equation}\label{LG1Hs5}
\left \|  |x|^{-b}|u|^2 v \right\|_{L^{\widehat{r}'}_x} \leq c\|u\|^{\theta}_{H^1_x}  \|u\|^{2-\theta}_{L_x^{\widehat{r}}} \|v\|_{L^{\widehat{r}}_x}
\end{equation}
and the H\"older inequality in the time variable leads to
\begin{eqnarray*}
\left \|  |x|^{-b}|u|^2 v \right\|_{L_t^{\widetilde{a}'}L^{\widehat{r}'}_x}&\leq& c \|u\|^{\theta}_{L^\infty_tH^1_x} \|u\|^{2-\theta}_{ L_t^{(2-\theta)a_1} L_x^{\widehat{r}}} \|v\|_{L^{\widehat{a}}_tL^{\widehat{r}}_x} \nonumber  \\
&=& c \|u\|^{\theta}_{L^\infty_tH^1_x} \|u\|^{2-\theta}_{ L_t^{\widehat{a}} L_x^{\widehat{r}}} \|v\|_{L^{\widehat{a}}_tL^{\widehat{r}}_x},
\end{eqnarray*}
 where
 \begin{equation}\label{L4i}
\frac{1}{\widetilde{a}'}=\frac{2-\theta}{\widehat{a}}+\frac{1}{\widehat{a}}.
\end{equation}
 Since $\widehat{a}$ and $\widetilde{a}$ defined in \eqref{PA2} satisfy \eqref{L4i} we conclude the proof of item\footnote{Recall that $(\widehat{a},\widehat{r})$ is $\dot{H}^{s_c}$-admissible and $(\widetilde{a},\widehat{r})$ is $\dot{H}^{-s_c}$-admissible.} (i).
 
 \ (ii) In the previous item we already have \eqref{LG1Hs5}, then applying H\"older's inequality in the time variable we obtain
 \begin{equation}\label{LGHsii}
\left\|  |x|^{-b}|u|^2 v\right \|_{L_t^{\widehat{q}'}L^{\widehat{r}'}_x}\leq  c \|u\|^{\theta}_{L^\infty_tH^s_x}\|u\|^{2-\theta}_{L_t^{\widehat{a}} L_x^{\widehat{r}}} \|v\|_{L^{\widehat{q}}_tL^{\widehat{r}}_x},
\end{equation}
since
\begin{equation}\label{LG2Hs1}
\frac{1}{\widehat{q}'}=\frac{2-\theta}{\widehat{a}}+\frac{1}{\widehat{q}}
\end{equation}
 by \eqref{PA1} and \eqref{PA2}. The proof is finished since $(\widehat{q},\widehat{r})$ is $L^2$-admissible.

\end{proof} 
\end{lemma}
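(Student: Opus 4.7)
The plan is to estimate the nonlinearity $|x|^{-b}|u|^2 v$ in the relevant dual Strichartz spaces by first controlling its $L^{\widehat{r}'}_x$ norm pointwise in time via H\"older's inequality in space, and then applying H\"older in the time variable to distribute norms onto the appropriate admissible pairs. Because $|x|^{-b}$ does not lie in any single Lebesgue space globally, the key trick is to decompose $\mathbb{R}^3 = B\cup B^C$ with $B = B(0,1)$ and exploit Remark \ref{RIxb}: on $B$ one has $\||x|^{-b}\|_{L^\gamma(B)}<\infty$ iff $3/\gamma > b$, while on $B^C$ one needs $3/\gamma<b$. These opposite conditions must be reconciled with the \emph{same} admissible pair on the right-hand side, by choosing different Sobolev exponents in the two regions.

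For part (i) I would target the $\dot{H}^{-s_c}$-admissible pair $(\widetilde{a},\widehat{r})$ on the left and the $\dot{H}^{s_c}$-admissible pair $(\widehat{a},\widehat{r})$ for $u$ and $v$, with $\widehat{r}$ shared so that H\"older in space produces two copies of $u$ in $L^{\widehat{r}}_x$ and one of $v$ in $L^{\widehat{r}}_x$. Splitting $|u|^2 = |u|^\theta |u|^{2-\theta}$ with $0<\theta<2$, I would place the factor $|u|^\theta$ into a high $L^{\theta r_1}_x$ norm and use the Sobolev embedding (Lemma \ref{SI} and Remark \ref{SEI2}) to bound it by $\|u\|_{H^1_x}^\theta$. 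Solving the H\"older bookkeeping
\begin{equation*}
\tfrac{1}{\widehat{r}'} = \tfrac{1}{\gamma}+\tfrac{1}{r_1}+\tfrac{2-\theta}{\widehat{r}}+\tfrac{1}{\widehat{r}}
\end{equation*}
for $\gamma$ gives $3/\gamma - b = \theta(2-b)/2 - 3/r_1$. On $B$ one then selects the endpoint $\theta r_1 = 6$, which makes $3/\gamma-b = \theta(1-b)>0$ provided $b<1$; on $B^C$ one instead selects $\theta r_1 = 2$, yielding $3/\gamma-b = -\theta(1+b)/2<0$. Both choices are compatible with the embedding $H^1(\mathbb{R}^3)\hookrightarrow L^{\theta r_1}_x$. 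This produces the common spatial bound
\begin{equation*}
\||x|^{-b}|u|^2 v\|_{L^{\widehat{r}'}_x} \lesssim \|u\|_{H^1_x}^{\theta}\,\|u\|_{L^{\widehat{r}}_x}^{2-\theta}\,\|v\|_{L^{\widehat{r}}_x}.
\end{equation*}
H\"older in time with $1/\widetilde{a}' = (2-\theta)/\widehat{a} + 1/\widehat{a}$ then pins down $\widetilde{a}$ as in \eqref{PA2}, and a direct check verifies that $(\widetilde{a},\widehat{r})$ is $\dot{H}^{-s_c}$-admissible.

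For part (ii) the spatial estimate is exactly the same; only the time-variable H\"older is different. The $v$-factor now has to be placed in an $L^2$-admissible pair $(\widehat{q},\widehat{r})$, so the time bookkeeping becomes $1/\widehat{q}' = (2-\theta)/\widehat{a}+1/\widehat{q}$, which determines a consistent $\widehat{q}$ and reproduces the $L^2$-admissible formula in \eqref{PA1}.

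The main obstacle is combinatorial rather than analytic: one must find a single $\theta\in(0,2)$ and a single spatial exponent $\widehat{r}$ so that (a) $|x|^{-b}$ is integrable to the right power on both $B$ and $B^C$, forcing $0<b<1$; (b) each $L^{\theta r_1}_x$ norm is controlled by $H^1_x$ via Sobolev, so $\theta r_1 \in [2,6]$ in each region; (c) the spatial reciprocals sum to $1/\widehat{r}'$; and (d) the resulting time pairs $(\widehat{a},\widehat{r})$, $(\widetilde{a},\widehat{r})$, $(\widehat{q},\widehat{r})$ are admissible in the appropriate sense. Once the two endpoint choices $\theta r_1 = 6$ on $B$ and $\theta r_1 = 2$ on $B^C$ are made, the remaining indices are essentially forced; writing out \eqref{PA1}--\eqref{PA2} and verifying admissibility is a direct, if bookkeeping-heavy, computation.
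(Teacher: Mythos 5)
Your proposal follows essentially the same route as the paper: the $B$/$B^C$ decomposition, the splitting $|u|^2=|u|^\theta|u|^{2-\theta}$ with the factor $|u|^\theta$ sent to $L^{\theta r_1}_x$ and controlled by Sobolev, the endpoint choices $\theta r_1=6$ on $B$ and $\theta r_1=2$ on $B^C$ to make $|x|^{-b}\in L^\gamma$ in each region, and the final H\"older in time onto the pairs $(\widetilde{a},\widehat{r})$ and $(\widehat{q},\widehat{r})$. The argument is correct and matches the paper's proof.
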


\begin{remark}\label{RGP} In the perturbation theory we use the following estimate
$$
\left \||x|^{-b}|u| v  w\right\|_{S'(L^2)} \leq c \| u\|^{\theta}_{L^\infty_tH^1_x}\|u\|^{1-\theta}_{S(\dot{H}^{s_c})} \|v\|_{S(\dot{H}^{s_c})}\|  w\|_{S(L^2)},
$$
where $\theta\in(0,1)$ is a sufficiently small number.
Its proof follows from the ideas of Lemma \ref{LG1} (ii), that is, we can repeat all the computations replacing $|u|^2 v$ by $|u|vw$ or, to be more precise, replacing $|u|^2 v=|u|^\theta |u|^{2-\theta }v$ by $|u| vw=|u|^\theta |u|^{1-\theta }vw$.
\end{remark}

\begin{lemma}\label{LG3} Let $0<b<1/2$. There exist $c>0$ and $\theta\in (0,2)$ sufficiently small such that
 \begin{equation*}
 \left\|\nabla (|x|^{-b}|u|^2 u)\right\|_{S'(L^2)}\leq c\| u\|^{\theta}_{L^\infty_tH^1_x}\|u\|^{2-\theta}_{S(\dot{H}^{s_c})} \|\nabla u\|_{S(L^2)}.
 \end{equation*}
 \begin{proof} Since $(2,6)$ is $L^2$-admissible in 3D and applying the product rule for derivatives we have
 \begin{eqnarray*}
\left\|\nabla\left(|x|^{-b}|u|^2 u \right)\right\|_{S'(L^2)}&\leq& \left\||x|^{-b}\nabla\left(|u|^2 u \right)\right\|_{S'(L^2)}+\left\|\nabla\left(|x|^{-b}\right)|u|^2 u \right\|_{S'(L^2)}\\
&\leq & \left\||x|^{-b}\nabla\left(|u|^2 u \right)\right\|_{L_t^{\widehat{q}'}L^{\widehat{r}'}_x} +  \left\|\nabla\left(|x|^{-b}\right)|u|^2 u \right\|_{L_t^{2'}L^{6'}_x}\\
&\leq & N_1+N2.
\end{eqnarray*}
 
\ First, we estimate $N_1$ (dividing in $B$ and $B^C$). It follows from H\"older's inequality that
\begin{eqnarray}\label{LG3Hs3}
\left\||x|^{-b}\nabla\left(|u|^2 u \right)\right\|_{L^{\widehat{r}'}_x(B)}  &\leq& \||x|^{-b}\|_{L^\gamma(B)}  \|u\|^{\theta}_{L^{\theta r_1}_x}   \|u\|^{2-\theta}_{L_x^{(2-\theta)r_2}}  \|\nabla u\|_{ L^{\widehat{r}}_x}\nonumber \\
&=&   \||x|^{-b}\|_{L^\gamma(B)}   \|u\|^{\theta}_{L^{\theta r_1}_x}  \|u\|^{2-\theta}_{L_x^{\widehat{r}}}  \|\nabla u\|_{ L^{\widehat{r}}_x},
\end{eqnarray}
where 
\begin{equation*}\label{LG3Hs4}
\frac{1}{\widehat{r}'}=\frac{1}{\gamma}+\frac{1}{r_1}+\frac{1}{r_2}+\frac{1}{\widehat{r}}\;\;\;\;\textnormal{and}\;\;\;\; \widehat{r}=(2-\theta)r_2.
\end{equation*}  
Notice that the right hand side of \eqref{LG3Hs3} is the same as the right hand side of \eqref{LG1Hs1}, with $v=\nabla u$. Thus, arguing in the same way as in Lemma \ref{LG1} (i) we obtain 
\begin{equation*}
\left \|  |x|^{-b}\nabla\left(|u|^2 u \right) \right\|_{L^{\widehat{r}'}_x(B)} \leq c\|u\|^{\theta}_{H^1_x}  \|u\|^{2-\theta}_{L_x^{\widehat{r}}} \|\nabla u\|_{L^{\widehat{r}}_x}.
\end{equation*}
We also obtain, by Lemma \ref{LG1} (i)
$$
\left \|  |x|^{-b}\nabla\left(|u|^2 u \right) \right\|_{L^{\widehat{r}'}_x(B^C)} \leq c\|u\|^{\theta}_{H^1_x}  \|u\|^{2-\theta}_{L_x^{\widehat{r}}} \|\nabla u\|_{L^{\widehat{r}}_x}.
$$
Moreover, the H\"older inequality in the time variable leads to (since $\frac{1}{\widetilde{q}'}=\frac{2-\theta}{\widehat{a}}+\frac{1}{\widehat{q}}$)
\begin{eqnarray}\label{SLG0}
N_1=\left \|  |x|^{-b}|u|^2 \nabla u \right\|_{L_t^{\widetilde{q}'}L^{\widehat{r}'}_x}&\leq& c \|u\|^{\theta}_{L^\infty_tH^1_x} \|u\|^{2-\theta}_{ L_t^{(2-\theta)a_1} L_x^{\widehat{r}}} \|\nabla u\|_{L^{\widehat{q}}_tL^{\widehat{r}}_x} \nonumber  \\
&=& c \|u\|^{\theta}_{L^\infty_tH^1_x} \|u\|^{2-\theta}_{ L_t^{\widehat{a}} L_x^{\widehat{r}}} \|\nabla u\|_{L^{\widehat{q}}_tL^{\widehat{r}}_x}.
\end{eqnarray}
 
\ To estimate $N_2$ we use the pairs $(\bar{a},\bar{r})=\left(8(1-\theta), \frac{12(1-\theta)}{3-2b-\theta(4-2b)}\right)$ $\dot{H}^{s_c}$-admissible and $(q,r)=\left(\frac{8(1-\theta)}{2-3\theta},\frac{12(1-\theta)}{4-3\theta}\right)$ $L^2$-admissible.\footnote{Note that $\frac{6}{2-b}=\frac{6}{3-2s_c}<\bar{r}<6$ (condition of $H^s$-admissible pair \eqref{Hsadmissivel}). Indeed, it is easy to check that $\bar{r}>\frac{6}{2-b}$. On the other hand, $\bar{r}<6\Leftrightarrow \theta(2-2b)<1-2b$, which is true by the assumption $b<1/2$ and $\theta>0$ is a small number. Moreover it is easy to see that $2<r<6$, i.e., $r$ satisfies the condition of admissible pair \eqref{L2admissivel}.}           . Let $A\subset \mathbb{R}^N$ such that $A=B$ or $A=B^C$. The H\"older inequality and the Sobolev embedding \eqref{SEI}, with $s=1$ imply
\begin{eqnarray}\label{SLG32}
\left\|  \nabla \left(|x|^{-b}\right)|u|^2 u \right \|_{L^{6'}_x(A)} & \leq& c\left\||x|^{-b-1} \right\|_{L^\gamma(A)} \|u\|^{\theta}_{L^{\theta r_1}_x}   \|u\|^{2-\theta}_{L_x^{(2-\theta)r_2}} \| u\|_{L_x^{ r_3}} \nonumber \\ 
&\leq & c  \left\||x|^{-b-1}\right\|_{L^\gamma(A)}  \|u\|^{\theta}_{L^{\theta r_1}_x} \| u\|^{2-\theta}_{L_x^{\bar{r}}}  \|\nabla u\|_{L_x^r},
\end{eqnarray}
where 
\begin{equation}\label{SLG3}
\frac{1}{6'}=\frac{1}{\gamma}+\frac{1}{r_1}+\frac{1}{r_2}+\frac{1}{r_3};\;\;\;\;1=\frac{3}{r}-\frac{3}{r_3};\;\;\;\bar{r}=(2-\theta)r_2. 
\end{equation}   
%From the H\"older inequality and the Sobolev embedding \eqref{SEI}, with $s=1$, one has
%\begin{eqnarray}\label{SLG32}
% M_2(t,A)& \leq& \||x|^{-b-1} \|_{L^d(A)} \|u\|^{\theta}_{L^{\theta r_1}_x}   \|u\|^{2-\theta}_{L_x^{(2-\theta)r_2}} \| u\|_{L_x^{ r_3}} \nonumber \\ 
%&\leq &   \||x|^{-b-1}\|_{L^d(A)}  \|u\|^{\theta}_{L^{\theta r_1}_x} \| u\|^{2-\theta}_{L_x^{\bar{r}}}  \|\nabla u\|_{L_x^r}
%\end{eqnarray}
%if
%\begin{equation*}
%\left\{\begin{array}{cl}
%\frac{1}{e}=&\frac{1}{r_1}+\frac{1}{r_2}+\frac{1}{r_3}\\ 
% 1=&\frac{3}{r}-\frac{3}{r_3}\\ 
%\bar{r}=&(\alpha-\theta)r_2.
%\end{array}\right.
%\end{equation*}
Note that the second equation in \eqref{SLG3} is valid since $r<3$. On the other hand, in order to show that $\||x|^{-b-1}\|_{L^d(A)}$ is bounded, we need $\frac{3}{d}-b-1>0$ when $A$ is the ball $B$ and $\frac{3}{d}-b-1<0$ when $A=B^C$, by Remark \ref{RIxb}. Indeed, using \eqref{SLG3} and the values of $q$, $r$, $\bar{q}$ and $\bar{r}$ defined above one has
\begin{eqnarray}\label{SLG33}
\frac{3}{\gamma}-b-1&=& \frac{5}{2}-b-\frac{3}{r_1}-\frac{3(2-\theta)}{\bar{r}}-\frac{3}{r}=\frac{\theta(2-b)}{2}-\frac{3}{r_1}.  
\end{eqnarray}
Now choosing $r_1$ such that 
$$
\theta r_1>\frac{6}{2-b} \textrm{ when } A=B \quad \textrm{and} \quad \theta r_1<\frac{6}{2-b} \textrm{ when } A=B^C,
$$
we get $\frac{3}{d}-b-1>0$ when $A=B$ and $\frac{3}{d}-b-1<0$ when $A=B^C$, so $|x|^{-b-1}\in L^d(A)$. In addition, we have by the Sobolev embedding \eqref{SEI22} (since $2<\frac{6}{2-b}<6$) and \eqref{SLG32}   
\begin{equation*}
 \left\|  \nabla \left(|x|^{-b}\right)|u|^2 u \right \|_{L^{6'}_x(A)} \leq c \|u\|^{\theta}_{H^1_x}\|u\|^{2-\theta}_{L^{\bar{r}}_x}   \|\nabla  u\|_{L_x^r}.
\end{equation*}
Finally, by H\"older's inequality in the time variable and the fact that $\frac{1}{2^{'}}=\frac{2-\theta}{\bar{a}}+\frac{1}{q}$,
%$$
%\frac{1}{2^{'}}=\frac{2-\theta}{\bar{a}}+\frac{1}{q},
%$$
we conclude 
 \begin{equation}\label{SLG35}
N_2= \left\|  \nabla \left(|x|^{-b}\right)|u|^2 u \right\|_{{L^{2'}_t L^{6'}_x}} \leq  c \|u\|^{\theta}_{L^\infty_tH^1_x}\|u\|^{2-\theta}_{L^{\bar{a}}_tL^{\bar{r}}_x}   \|\nabla u\|_{L^q_tL_x^r}.
\end{equation}	 
The proof is completed combining \eqref{SLG0} and \eqref{SLG35}.
\end{proof}
\end{lemma}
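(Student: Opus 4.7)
The plan is to use the product rule
\[
\nabla(|x|^{-b}|u|^2u) \;=\; |x|^{-b}\nabla(|u|^2u) \;+\; \nabla(|x|^{-b})\,|u|^2u,
\]
and bound the two pieces separately, which I will call $N_1$ and $N_2$. The first piece $N_1$ is essentially the same cubic nonlinearity already handled in Lemma \ref{LG1}(ii), with $v$ replaced by $\nabla u$: the bound I need is exactly
\[
\|N_1\|_{S'(L^2)} \;\lesssim\; \|u\|_{L^\infty_t H^1_x}^{\theta}\,\|u\|_{S(\dot H^{s_c})}^{2-\theta}\,\|\nabla u\|_{S(L^2)}.
\]
So for $N_1$ I would just rerun the proof of Lemma \ref{LG1}(ii), splitting into $B$ and $B^C$, applying H\"older with exponents $\gamma, r_1, r_2, \widehat r$ where $\widehat r = (2-\theta)r_2$, picking $\theta r_1 = 6$ on $B$ and $\theta r_1 = 2$ on $B^C$ to make $|x|^{-b}$ integrable in each region, then using $H^1 \hookrightarrow L^{\theta r_1}$ on the factor $\|u\|_{L^{\theta r_1}_x}^\theta$ and H\"older in time with $(\widehat q, \widehat r)$ on the left and $(\widehat a, \widehat r)$ on the factor $\|u\|^{2-\theta}$.

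The main work — and the place where the hypothesis $b<1/2$ is going to bite — is the second piece $N_2$, which modulo a harmless constant is $|x|^{-b-1}|u|^2u$. Since the singularity at the origin is now $-b-1$ rather than $-b$, the exponent $\gamma$ for which $|x|^{-b-1}\in L^\gamma(B)$ requires $3/\gamma > b+1$, which is more restrictive. To compensate I would give up one of the $u$ factors by moving it into $\nabla u$ via Sobolev embedding: write
\[
\|N_2\|_{L^{6'}_x(A)} \;\lesssim\; \bigl\||x|^{-b-1}\bigr\|_{L^\gamma(A)}\,\|u\|_{L^{\theta r_1}_x}^\theta\,\|u\|_{L^{(2-\theta)r_2}_x}^{2-\theta}\,\|u\|_{L^{r_3}_x},
\]
with $A=B$ or $B^C$, and then invoke Sobolev with $1 = 3/r - 3/r_3$ to replace $\|u\|_{L^{r_3}_x}$ by $\|\nabla u\|_{L^r_x}$. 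The natural choice on the left is the $L^2$-admissible pair $(2,6)$ (hence $L^{6'}_x$); the $u$-factor raised to $2-\theta$ should sit in the $\dot H^{s_c}$-admissible pair $(\bar a,\bar r)$, and $\nabla u$ in an $L^2$-admissible pair $(q,r)$ with $r<3$. After computing the scaling relation analogous to \eqref{LG1Hs3}, the condition $3/\gamma - b - 1$ (positive on $B$, negative on $B^C$) reduces to choosing $\theta r_1$ above or below $6/(2-b)$.

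The delicate point, and where I expect the real obstacle, is the simultaneous requirement that $(\bar a,\bar r)$ be $\dot H^{s_c}$-admissible: the admissibility range $6/(3-2s_c) \le \bar r \le 6^-$ forces $\bar r < 6$, and working out the numerology this translates precisely into $\theta(2-2b) < 1-2b$, which demands $b<1/2$ (with $\theta$ taken sufficiently small). With the pairs chosen to make this work, one last H\"older in time with the identity $\frac{1}{2'} = \frac{2-\theta}{\bar a} + \frac{1}{q}$ closes the estimate for $N_2$:
\[
\|N_2\|_{S'(L^2)} \;\lesssim\; \|u\|_{L^\infty_t H^1_x}^{\theta}\,\|u\|_{L^{\bar a}_t L^{\bar r}_x}^{2-\theta}\,\|\nabla u\|_{L^q_t L^r_x} \;\le\; \|u\|_{L^\infty_t H^1_x}^{\theta}\,\|u\|_{S(\dot H^{s_c})}^{2-\theta}\,\|\nabla u\|_{S(L^2)}.
\]
Adding the bounds for $N_1$ and $N_2$ yields the lemma. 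The only nontrivial algebra is verifying that all the Sobolev/admissibility inequalities can be met simultaneously, and tracking where $b<1/2$ is forced — both come from the $N_2$ piece.
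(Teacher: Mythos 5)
Your proposal is correct and follows essentially the same route as the paper: the same product-rule splitting into $N_1$ and $N_2$, the reuse of the Lemma \ref{LG1} machinery for $N_1$ with $v=\nabla u$, and for $N_2$ the same choice of the $(2,6)$ pair on the left, the Sobolev trade $1=\tfrac{3}{r}-\tfrac{3}{r_3}$ to convert $\|u\|_{L^{r_3}}$ into $\|\nabla u\|_{L^{r}}$, and the identification that $\bar r<6$ forces $\theta(2-2b)<1-2b$, i.e.\ $b<1/2$. No gaps.
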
 
\begin{remark} We notice that in Lemma \ref{LG1} and Remark \ref{RGP} we assume $0<b<1$. On the other hand, in Lemma \ref{LG3} the required assumption is $0<b<1/2$ (see footnote $5$). For this reason in our main result, Theorem \ref{SCATTERING}), the restriction on $b$ is different than the one in Theorem \ref{TG}.   
\end{remark}

\begin{remark}\label{RSglobal} A consequence of the previous lemma is the following estimate
$$
\left\| |x|^{-b-1}|u|^2 v  \right\|_{S'(L^2)}\lesssim  \| u\|^{\theta}_{L^\infty_tH^1_x}\|u\|^{2-\theta}_{S(\dot{H}^{s_c})}  \|\nabla v\|_{S(L^2)}.
$$
\end{remark} 

\ Our first result in this section concerning the IVP \eqref{INLS} is the following
\begin{proposition}\label{GWPH1}{\bf (Small data global theory in $H^1$)}
Let $0<b<1/2$ and $u_0 \in H^1(\mathbb{R}^3)$. Assume $\|u_0\|_{H^1}\leq A$. There there exists $\delta=\delta(A)>0$ such that if $\|U(t)u_0\|_{S(\dot{H}^{s_c})}<\delta$, then there exists a unique global solution $u$ of \eqref{INLS} such that
\begin{equation*}\label{NGWP3}
\|u\|_{S(\dot{H}^{s_c})}\leq 2\|U(t)u_0\|_{S(\dot{H}^{s_c})} 
\end{equation*}
and
\begin{equation*}\label{NGWP4}
\|u\|_{S\left(L^2\right)}+\|\nabla  u\|_{S\left(L^2\right)}\leq 2c\|u_0\|_{H^1}.
\end{equation*}
\begin{proof} %See Guzm\'an \cite[Corollary 1.11, with $(N,\alpha)=(3,2)$]{CARLOS}.
To this end, we use the contraction mapping principle. Define
$$
B=\{ u:\;\|u\|_{S(\dot{H}^{s_c})}\leq 2\|U(t)u_0\|_{S(\dot{H}^{s_c})}\;\textnormal{and}\;\|u\|_{S(L^2)}+\|\nabla u\|_{S(L^2)}\leq 2c\|u_0\|_{H^1}\}.
$$ We prove that $G$ defined below
\begin{equation*}\label{OPERATOR} % IE= integral equation
G(u)(t)=U(t)u_0+i \int_0^t U(t-t')F(x,u)(t')dt',
\end{equation*}
where $F(x,u)=|x|^{-b}|u|^2 u$ is a contraction on $B$ equipped with the metric 
$$
d(u,v)=\|u-v\|_{S(L^2)}+\|u-v\|_{S(\dot{H}^{s_c})}.
$$
		
\ Indeed, we deduce by the Strichartz inequalities (\ref{SE1}), (\ref{SE2}), \eqref{SE3} and \eqref{SE5}
\begin{equation}\label{GHs1}
\|G(u)\|_{S(\dot{H}^{s_c})}\leq \|U(t)u_0\|_{S(\dot{H}^{s_c})}+ c\| F \|_{S'(\dot{H}^{-s_c})}
\end{equation}
\begin{equation}\label{GHs2}
\|G(u)\|_{S(L^2)}\leq c\|u_0\|_{L^2}+ c\| F \|_{S'(L^2)}
\end{equation}
%and
\begin{equation}\label{GHs3}
 \|\nabla G(u)\|_{S(L^2)}\leq c \|\nabla  u_0\|_{L^2}+ c\|\nabla F\|_{S'(L^2)}.
\end{equation}
On the other hand, it follows from Lemmas \ref{LG1} and \ref{LG3} that
\begin{eqnarray*}
\|F\|_{S'(\dot{H}^{-s_c})}&\leq & c\| u \|^\theta_{L^\infty_tH^1_x}\| u \|^{2-\theta}_{S(\dot{H}^{s_c})}\| u \|_{S(\dot{H}^{s_c})}\\
\|F\|_{S'(L^2)}&\leq& c\| u \|^\theta_{L^\infty_tH^1_x}\| u \|^{2-\theta}_{S(\dot{H}^{s_c})}\| u \|_{S(L^2)}
\end{eqnarray*}
%and	
\begin{eqnarray*}
\|\nabla F\|_{S'(L^2)}&\leq& c\| u\|^{\theta}_{L^\infty_tH^1_x}\|u\|^{2-\theta}_{S(\dot{H}^{s_c})} \|\nabla u\|_{S(L^2)}.
\end{eqnarray*}
Combining \eqref{GHs1}-\eqref{GHs3} and the last inequalities, we get for $u\in B$
\begin{align*}\label{TGHS}
\|G(u)\|_{S(\dot{H}^{s_c})}\leq& \|U(t)u_0\|_{S(\dot{H}^{s_c})} +c\| u \|^\theta_{L^\infty_tH^1_x}\| u \|^{2-\theta}_{S(\dot{H}^{s_c})}\| u \|_{S(\dot{H}^{s_c})} \nonumber \\
\leq & \|U(t)u_0\|_{S(\dot{H}^{s_c})}+8c^{\theta+1}\|u_0\|^\theta_{H^1}\| U(t)u_0 \|^{3-\theta}_{S(\dot{H}^{s_c})}.
\end{align*}
In addition, setting $X=\|\nabla u\|_{S(L^2)}+\| u\|_{S(L^2)}$ then
\begin{eqnarray*}
\|G(u)\|_{S(L^2)}+\|\nabla G(u)\|_{S(L^2)}&\leq & c\|u_0\|_{H^1}+c\| u \|^\theta_{L^\infty_tH^1_x}\| u \|^{2-\theta}_{S(\dot{H}^{s_c})}X \nonumber\\
&\leq & c\|u_0\|_{H^1}+16c^{\theta+2}\|u_0\|_{H^1}^{\theta+1}\| U(t)u_0 \|^{2-\theta}_{S(\dot{H}^{s_c})},
\end{eqnarray*}
where we have have used the fact that $X\leq 2^2c\|u_0\|_{H^1}$ since $u\in B$.

\ Now if $\| U(t)u_0 \|_{S(\dot{H}^{s_c})}<\delta$ with 
\begin{equation}\label{WD}
\delta\leq \min\left\{\sqrt[2-\theta]{\frac{1}{16c^{\theta+1}A^\theta}}     , \sqrt[2-\theta]{ \frac{1}{32c^{\theta+1}A^\theta}}\right\},
\end{equation}
where $A>0$ is a number such that $\|u_0\|_{H^1}\leq A$, we get 
$$\|G(u)\|_{S(\dot{H}^{s_c})}\leq 2\|U(t)u_0\|_{S(\dot{H}^{s_c})}$$ and  $$\|G(u)\|_{S(L^2)}+\|\nabla G(u)\|_{S(L^2)}\leq 2c\|u_0\|_{H^1},$$ that is  $G(u)\in B$. The contraction property can be obtained by similar arguments. Therefore, by the Banach Fixed Point Theorem, $G$ has a unique fixed point $u\in B$, which is a global solution of \eqref{INLS}.

\end{proof}
\end{proposition}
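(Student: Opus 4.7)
The plan is to apply the Banach fixed point theorem to the Duhamel integral operator
\[ G(u)(t) = U(t)u_0 + i\int_0^t U(t-t')\bigl(|x|^{-b}|u|^2 u\bigr)(t')\,dt' \]
on the complete metric space
\[ B = \bigl\{u : \|u\|_{S(\dot{H}^{s_c})} \leq 2\|U(t)u_0\|_{S(\dot{H}^{s_c})},\ \|u\|_{S(L^2)} + \|\nabla u\|_{S(L^2)} \leq 2c\|u_0\|_{H^1}\bigr\}, \]
equipped with the metric $d(u,v) = \|u-v\|_{S(L^2)} + \|u-v\|_{S(\dot{H}^{s_c})}$. The three norms are chosen so that the critical Strichartz norm carries the smallness coming from $\delta$, while the mass and gradient Strichartz norms absorb the size $A$ of the $H^1$ initial data.

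For the self-mapping property, I would combine the linear and inhomogeneous Strichartz estimates \eqref{SE1}--\eqref{SE5} with Lemma \ref{LG1}(i) to control the nonlinearity in $S'(\dot{H}^{-s_c})$, with Lemma \ref{LG1}(ii) to control it in $S'(L^2)$, and with Lemma \ref{LG3} to control its gradient in $S'(L^2)$. These give
\[ \|G(u)\|_{S(\dot{H}^{s_c})} \leq \|U(t)u_0\|_{S(\dot{H}^{s_c})} + c\|u\|^\theta_{L^\infty_t H^1_x}\|u\|^{3-\theta}_{S(\dot{H}^{s_c})} \]
and, writing $X = \|u\|_{S(L^2)} + \|\nabla u\|_{S(L^2)}$,
\[ \|G(u)\|_{S(L^2)} + \|\nabla G(u)\|_{S(L^2)} \leq c\|u_0\|_{H^1} + c\|u\|^\theta_{L^\infty_t H^1_x}\|u\|^{2-\theta}_{S(\dot{H}^{s_c})}\, X. \]
Inserting the defining bounds of $B$ together with $\|u_0\|_{H^1} \leq A$, I choose $\delta = \delta(A)$ so small that $c A^\theta \delta^{2-\theta}$ is below any prescribed threshold, which closes $G: B \to B$.

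The contraction property follows from the same estimates applied to $F(x,u) - F(x,v)$, using the pointwise bound $|F(x,u) - F(x,v)| \lesssim |x|^{-b}(|u|^2+|v|^2)|u-v|$ from \eqref{FEI} together with the trilinear variant in Remark \ref{RGP}. For $u,v \in B$ this produces
\[ d(G(u), G(v)) \leq c A^\theta \delta^{2-\theta}\, d(u,v), \]
a strict contraction for the same choice of $\delta$. Banach's theorem then delivers a unique fixed point in $B$, whose membership in $B$ is exactly the pair of bounds stated in the proposition.

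The main obstacle is the careful balancing among the three Strichartz norms. The exponent $\theta \in (0,2)$ produced by Lemmas \ref{LG1} and \ref{LG3} must be small enough that the nonlinear terms carry a factor $\delta^{2-\theta}$ that can be absorbed, while the accompanying factor $\|u\|^\theta_{L^\infty_t H^1_x}$ is controlled only by the fixed size $A^\theta$. The restriction $0 < b < 1/2$ enters precisely through Lemma \ref{LG3}, via the local integrability of $|x|^{-b-1}$ needed to handle the $\nabla(|x|^{-b})$ term; without it the gradient Strichartz norm cannot be closed, which is exactly why the hypothesis on $b$ in the proposition differs from the one in Theorem \ref{TG}.
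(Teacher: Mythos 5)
Your proposal is correct and follows essentially the same route as the paper: the same set $B$, the same metric, the same Duhamel operator, and the same combination of the Strichartz estimates \eqref{SE1}--\eqref{SE5} with Lemma \ref{LG1} and Lemma \ref{LG3}, closing the argument by choosing $\delta(A)$ so that $cA^\theta\delta^{2-\theta}$ is small. Your sketch of the contraction step via \eqref{FEI} and Remark \ref{RGP} correctly fills in what the paper dismisses as ``similar arguments,'' and your remark on where $0<b<1/2$ enters matches the paper's own footnoted explanation.
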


\ We now show Proposition \ref{SCATTERSH1} (this result gives us the criterion to establish scattering).
	
\begin{proof}[\bf{Proof of Proposition \ref{SCATTERSH1}}] First, we claim that 
 \begin{equation}\label{SCATTER1}
 \|u\|_{S(L^2)}+\|\nabla u\|_{S(L^2)}<+\infty.
 \end{equation}
 
 \ Indeed, since $\|u\|_{S(\dot{H}^{s_c})}<+\infty$, given $\delta>0$ we can decompose $[0,\infty)$ into $n$ many intervals $I_j=[t_j,t_{j+1})$ such that $\|u\|_{S(\dot{H}^{s_c};I_j)}<\delta$ for all $j=1,...,n$. On the time interval $I_j$ we consider the integral equation
 \begin{equation*}\label{SCATTER2}
  u(t)=U(t-t_j)u(t_j)+i\int_{t_j}^{t_{j+1}}U(t-s)(|x|^{-b}|u|^2 u)(s)ds.
 \end{equation*}
 It follows from the Strichartz estimates \eqref{SE1} and \eqref{SE3} that
\begin{equation}\label{SCATTER3} %scattering 1
  \|u\|_{S(L^2;I_j)}\leq c\|u(t_j)\|_{L^2_x}+c\left\||x|^{-b}|u|^2 u \right\|_{S'(L^2;I_j)}	
\end{equation}	 
 %and
 \begin{equation}\label{SCATTER4}
  \|\nabla u\|_{S(L^2;I_j)}\leq c\|\nabla u(t_j)\|_{L^2_x}+c\left\|\nabla(|x|^{-b}|u|^2 u) \right\|_{S'(L^2;I_j)}.
 \end{equation}
From Lemmas \ref{LG1} (ii) and \ref{LG3} we have
\begin{eqnarray*}
 \left\||x|^{-b}|u|^2 u \right\|_{S'(L^2;I_j)}  &\leq& c \| u \|^\theta_{L^\infty_{I_j}H^1_x}\| u \|^{2-\theta}_{S(\dot{H}^{s_c};I_j)}\|u\|_{S(L^2;I_j)},
\end{eqnarray*}
 \begin{equation*}
\|\nabla(|x|^{-b}|u|^2 u)\|_{S'(L^2;I_j)}\leq c\| u\|^{\theta}_{L^\infty_{I_j}H^1_x}\|u\|^{2-\theta}_{S(\dot{H}^{s_c};I_j)}  \|\nabla u\|_{S(L^2;I_j)} .
\end{equation*}
 Thus, using \eqref{SCATTER3}, \eqref{SCATTER4} and the last two estimates we get	
$$
\|u\|_{S(L^2;I_j)}\leq c B+cB^\theta\delta^{2-\theta}\|u\|_{S(L^2;I_j)}
$$
and
\begin{equation}\label{nablau}
\|\nabla u\|_{S(L^2;I_j)}\leq c B+ cB^{\theta+1}\delta^{2-\theta}+cB^\theta\delta^{2-\theta}\|\nabla u\|_{S(L^2;I_j)},
\end{equation}
where we have used the assumption $\sup\limits_{t\in \mathbb{R}}\|u(t)\|_{H^1}\leq B$. Taking $\delta>0$ such that $c B^\theta\delta^{2-\theta}<\frac{1}{2}$ we obtain $\| u\|_{S(L^2;I_j)}+	\|\nabla u\|_{S(L^2;I_j)} \leq cB$,
%$$
%\| u\|_{S(L^2;I_j)}+	\|\nabla u\|_{S(L^2;I_j)} \leq cB,
%$$
and by summing over the $n$ intervals, we conclude the proof of \eqref{SCATTER1}.	
	
 \ Returning to the proof of the proposition, let
$$
\phi^+=u_0+i\int\limits_{0}^{+\infty}U(-s)|x|^{-b}(|u|^2 u)(s)ds,
$$
Note that, $\phi^+ \in H^1(\mathbb{R}^3)$. Indeed, by the same arguments as ones used before we deduce 
 \begin{equation*}
 \|\phi^+\|_{L^2}+\|\nabla\phi^+\|_{L^2}\leq c\|u_0\|_{H^1}+c \| u \|^\theta_{L^\infty_tH^1_x}\| u \|^{2-\theta}_{S(\dot{H}^{s_c})}\left(  \|u\|_{S(L^2)}+\| \nabla u\|_{S(L^2)} \right).
\end{equation*}	 
%and
%\begin{eqnarray*}
%\|\nabla\phi^+\|_{L^2}&\leq & c\|\nabla u_0\|_{L^2}+c\| u\|^{\theta}_{L^\infty_tH^1_x}\|u\|^{2-\theta}_{S(\dot{H}^{s_c})} \| \nabla u\|_{S(L^2)} .  
%\end{eqnarray*}
Therefore, \eqref{SCATTER1} yields $\|\phi\|_{H^1}<+\infty$.
  
\ On the other hand, since $u$ is a solution of \eqref{INLS} we get
$$
 u(t)-U(t)\phi^+=-i\int\limits_{t}^{+\infty}U(t-s)|x|^{-b}(|u|^2 u)(s)ds.
$$
Similarly as before, we have 
$$
 \|u(t)-U(t)\phi\|_{H^1_x}  \leq c \| u \|^\theta_{L^\infty_tH^1_x}\| u \|^{2-\theta}_{S(\dot{H}^{s_c};[t,\infty))} \left( \|u\|_{S(L^2)}+ \| \nabla u\|_{S(L^2)}  \right)
$$
%and
%\begin{eqnarray*}
%\|\nabla(u(t)-U(t)\phi)\|_{L^2_x}   &\leq & c  \| u\|^{\theta}_{L^\infty_tH^1_x}\|u\|^{2-\theta}_{S(\dot{H}^{s_c};[t,\infty))}  \| \nabla u\|_{S(L^2)}   
%\end{eqnarray*}
The proof is completed after using \eqref{SCATTER1} and  $\|u\|_{S(\dot{H}^{s_c};[t,\infty))}\rightarrow 0$ as $t \rightarrow +\infty$. 
\end{proof}	
\begin{remark} In the same way we define 
$$
\phi^-=u_0+i\int_0^{-\infty}U(-s)|x|^{-b}(|u|^2 u)(s)ds,
$$
and using the same argument as before we have $\phi^-\in H^1$ and 
%$$
% u(t)-U(t)\phi^-=i\int\limits_{-\infty}^{t}U(t-s)|x|^{-b}(|u|^2 u)(s)ds,
%$$
%which also satisfies (using the same argument as before) 
$$
\|u(t)-U(t)\phi^-\|_{H^1_x}\rightarrow 0 \,\,\textnormal{as}\,\,t\rightarrow -\infty.
$$
\end{remark}

\ Next, we study the perturbation theory for the IVP \eqref{INLS} following the exposition in Killip-Kwon-Shao-Visan \cite[Theorem $3.1$]{KKSV}. We first obtain a short-time perturbation which can be iterated to obtain a long-time perturbation result. 
\begin{proposition}\label{STP}{\bf (Short-time perturbation theory for the INLS)} Let $I\subseteq \mathbb{R}$ be a time interval containing zero and let $\widetilde{u}$ defined on $I\times \mathbb{R}^3$ be a solution (in the sense of the appropriated integral equation) to 
\begin{equation*}\label{PE}%perturb eq
i\partial_t \widetilde{u} +\Delta \widetilde{u} + |x|^{-b} |\widetilde{u}|^2 \widetilde{u} =e,
\end{equation*}  
with initial data $\widetilde{u}_0\in H^1(\mathbb{R}^3)$, satisfying 
\begin{equation}\label{PC11}  %pertub cond 1
\sup_{t\in I}  \|\widetilde{u}(t)\|_{H^1_x}\leq M \;\; \textnormal{and}\;\; \|\widetilde{u}\|_{S(\dot{H}^{s_c}; I)}\leq \varepsilon,
\end{equation}
for some positive constant $M$ and some small $\varepsilon>0$.
	
\indent  Let $u_0\in H^1(\mathbb{R}^3)$ such that 
\begin{equation}\label{PC22}
 \|u_0-\widetilde{u}_0\|_{H^1}\leq M'\;\; \textnormal{and}\;\; \|U(t)(u_0-\widetilde{u}_0)\|_{S(\dot{H}^{s_c}; I)}\leq \varepsilon,\;\;\textnormal{for }\; M'>0.
\end{equation}
In addition, assume the following conditions
\begin{equation}\label{PC33}
\|e\|_{S'(L^2; I)}+\|\nabla e\|_{S'(L^2; I)}+  \|e\|_{S'(\dot{H}^{-s_c}; I)}\leq \varepsilon.
\end{equation}
\indent There exists $\varepsilon_0(M,M')>0$ such that if $\varepsilon<\varepsilon_0$, then there is a unique solution $u$ to \eqref{INLS} on $I\times \mathbb{R}^3$ with initial data $u_0$, at the time $t=0$, satisfying 
 \begin{equation}\label{C} %conlusao 
 \|u\|_{S(\dot{H}^{s_c}; I)}\lesssim \varepsilon %\;\;\;\;\;\; \textnormal{and}
 \end{equation}
 and
  \begin{equation}\label{C1}
  \|u\|_{S(L^2; I)}+\|\nabla u\|_{S(L^2; I)}\lesssim c(M,M').
  \end{equation}
	\begin{proof} We use the following claim (we will show it later): there exists $\varepsilon_0>0$ sufficiently small such that, if $\|\widetilde{u}\|_{S(\dot{H}^{s_c};I)}\leq \varepsilon_0$ then 
  \begin{equation}\label{ST1} % short time 1
	\|\widetilde{u}\|_{S(L^2;I)} \lesssim M\;\;\;\textnormal{and}\;\;\;\;\|\nabla\widetilde{u}\|_{S(L^2;I)} \lesssim M.
  \end{equation}
   
   \ We may assume, without loss of generality, that $0=\inf I$. Let us first prove the existence of a solution $w$ for the 
    %$u=\widetilde{u}+w$, then $w$ solves the 
    following initial value problem  
	
  \begin{equation}\label{IVPP} %IVP perturbado
  \left\{\begin{array}{cl}
  i\partial_tw +\Delta w + H(x,\widetilde{u},w)+e= 0,&  \\
  w(0,x)= u_0(x)-\widetilde{u}_0(x),& 
  \end{array}\right.
  \end{equation}
  where $H(x,\widetilde{u},w)=|x|^{-b} \left(|\widetilde{u}+w|^2 (\widetilde{u}+w)-|\widetilde{u}|^2 \widetilde{u}\right)$.	
	
\ To this end, let  
\begin{equation}\label{IEP} % IE=integral equation perturbado
G (w)(t):=U(t)w_0+i \int_0^t U(t-s)(H(x,\widetilde{u},w)+e)(s)ds
\end{equation}
and define
$$
B_{\rho,K}=\{ w\in C(I;H^1(\mathbb{R}^3)):\;\|w\|_{S(\dot{H}^{s_c};I)}\leq \rho\;\textnormal{and}\;\|w\|_{S(L^2;I)}+\|\nabla w\|_{S(L^2;I)}\leq K    \}.
 $$
For a suitable choice of the parameters $\rho>0$ and $K>0$, we need to show that $G$ in \eqref{IEP} defines a contraction on $B_{\rho,K}$.
%under the metric given by
%$$
%d(w,z)=\|w-z\|_{S(L^2)}+\|w-z\|_{S(\dot{H}^{s_c})}.
%$$
Indeed, applying Strichartz inequalities (\ref{SE1}), (\ref{SE2}), (\ref{SE3}) and \eqref{SE5} we have
\begin{equation}\label{SP1}% short pertub 1
\|G(w)\|_{S(\dot{H}^{s_c};I)}\lesssim \|U(t)w_0\|_{S(\dot{H}^{s_c};I)}+ \| H(\cdot,\widetilde{u},w) \|_{S'(\dot{H}^{-s_c};I)}+\|e \|_{S'(\dot{H}^{-s_c};I)}
\end{equation}
\begin{equation}\label{SP2}
\|G(w)\|_{S(L^2;I)}\lesssim \|w_0\|_{L^2}+ \| H(\cdot,\widetilde{u},w) \|_{S'(L^2;I)}+\|e\|_{S'(L^2;I)}
\end{equation}
%and	
\begin{equation}\label{SP3}
\|\nabla G(w)\|_{S(L^2;I)}\lesssim  \|\nabla w_0\|_{L^2}+ \| \nabla H(\cdot,\widetilde{u},w)\|_{S'(L^2;I)}+\|\nabla e\|_{S'(L^2;I)}.
\end{equation}	
On the other hand, since 
\begin{equation}\label{EI} % elelmenta ineq
\left| |\widetilde{u}+w|^2(\widetilde{u}+w)-|\widetilde{u}|^2\widetilde{u}\right|\lesssim |\widetilde{u}|^2|w|+|w|^{3}
\end{equation}
by \eqref{FEI}, we get
$$
\|H(\cdot,\widetilde{u},w)\|_{S'(\dot{H}^{-s_c};I)}\leq \||x|^{-b}|\widetilde{u}|^2 w\|_{S'(\dot{H}^{-s_c};I)}+\||x|^{-b}|w|^2 w\|_{S'(\dot{H}^{-s_c};I)},
$$
which implies using Lemma \ref{LG1} (i) that
\begin{align}\label{SP4}
 \|H(\cdot,\widetilde{u},w)\|_{S'(\dot{H}^{-s_c};I)}\lesssim  \left(\| \widetilde{u} \|^\theta_{L^\infty_tH^1_x}\| \widetilde{u} \|^{2-\theta}_{S(\dot{H}^{s_c};I)}+ \| w \|^\theta_{L^\infty_tH^1_x} \| w\|^{2-\theta}_{S(\dot{H}^{s_c};I)}    \right) \| w \|_{S(\dot{H}^{s_c};I)}.
\end{align}
The same argument and Lemma \ref{LG1} (ii) also yield
\begin{align}\label{SP5}
\|H(\cdot,\widetilde{u},w)\|_{S'(L^2;I)}\lesssim \left(\| \widetilde{u} \|^\theta_{L^\infty_tH^1_x}\| \widetilde{u} \|^{2-\theta}_{S(\dot{H}^{s_c};I)} + \| w \|^\theta_{L^\infty_tH^1_x}\| w\|^{2-\theta}_{S(\dot{H}^{s_c};I)} \right)\| w \|_{S(L^2;I)}.
\end{align}
 Now, we estimate $\|\nabla H(\cdot,\widetilde{u},w)\|_{S'(L^2;I)}$. It follows from \eqref{SECONDEI} and \eqref{EI} that
 \begin{equation*} %\label{SP6}
 |\nabla H(x,\widetilde{u},w)| \lesssim |x|^{-b-1}(|\widetilde{u}|^{2}+|w|^{2})|w|+|x|^{-b}(|\widetilde{u}|^2+|w|^2) |\nabla w| +E,
 \end{equation*}
 where $E \lesssim 
 |x|^{-b}\left(|\widetilde{u}|+|w|\right)|w||\nabla \widetilde{u}|. 
$
%\begin{eqnarray*} %\label{NONLI10}
% E &\lesssim& 
% |x|^{-b}\left(|\widetilde{u}|+|w|\right)|w||\nabla \widetilde{u}| 
%\end{eqnarray*}
Thus, Lemma \ref{LG1} (ii), Remark \ref{RSglobal} and Remark \ref{RGP} lead to
$$
\|\nabla H(\cdot,\widetilde{u},w)\|_{S'(L^2;I)} \lesssim \left(\| \widetilde{u} \|^\theta_{L^\infty_tH^1_x}\| \widetilde{u} \|^{2-\theta}_{S(\dot{H}^{s_c};I)} + \| w \|^\theta_{L^\infty_tH^1_x}\| w\|^{2-\theta}_{S(\dot{H}^{s_c};I)} \right)\|\nabla w \|_{S(L^2;I)}
$$
\begin{align}\label{SP6}
\hspace{1.5cm}+\left(\| \widetilde{u} \|^\theta_{L^\infty_tH^1_x} \| \widetilde{u} \|^{1-\theta}_{S(\dot{H}^{s_c};I)} + \| w \|^\theta_{L^\infty_tH^1_x} \| w \|^{1-\theta}_{S(\dot{H}^{s_c};I)} \right) \| w \|_{S(\dot{H}^{s_c};I)}  \|\nabla \widetilde{u} \|_{S(L^2;I)}
\end{align}
%$$
%\hspace{2.0cm} +\left(\| \widetilde{u} \|^\theta_{L^\infty_tH^1_x}\| \widetilde{u} \|^{2-\theta}_{S(\dot{H}^{s_c};I)} + \| w \|^\theta_{L^\infty_tH^1_x}\| w\|^{2-\theta}_{S(\dot{H}^{s_c};I)} \right)\| w \|_{L^\infty_tH^1_x}
%$$
%$$
%\hspace{2.4cm}+\left(\| \widetilde{u} \|^\theta_{L^\infty_tH^1_x}\| \widetilde{u} \|^{2-\theta}_{S(\dot{H}^{s_c};I)} + \| w \|^\theta_{L^\infty_tH^1_x}\| w\|^{2-\theta}_{S(\dot{H}^{s_c};I)} \right)\|\nabla w \|_{S(L^2;I)} +E_1,
%$$
%$$
%\hspace{3.2cm}\lesssim \left(\| \widetilde{u} \|^\theta_{L^\infty_tH^1_x}\| \widetilde{u} \|^{2-\theta}_{S(\dot{H}^{s_c};I)} + \| w \|^\theta_{L^\infty_tH^1_x}\| w\|^{2-\theta}_{S(\dot{H}^{s_c};I)} \right)\|\nabla w \|_{S(L^2;I)}
%$$
%\begin{equation}\label{SP6}
%\hspace{2.8cm} +\left(\| \widetilde{u} \|^\theta_{L^\infty_tH^1_x}\| \widetilde{u} \|^{2-\theta}_{S(\dot{H}^{s_c};I)} + \| w \|^\theta_{L^\infty_tH^1_x}\| w\|^{2-\theta}_{S(\dot{H}^{s_c};I)} \right)\| w \|_{L^\infty_tH^1_x}+E_1.
%\end{equation}
%Moreover, using Remark \ref{RGP},
%\begin{align*}
%E_1\lesssim & 
%\left(\| \widetilde{u} \|^\theta_{L^\infty_tH^1_x} \| \widetilde{u} \|^{1-\theta}_{S(\dot{H}^{s_c};I)} + \| w \|^\theta_{L^\infty_tH^1_x} \| w \|^{1-\theta}_{S(\dot{H}^{s_c};I)} \right) \| w \|_{S(\dot{H}^{s_c};I)}  \|\nabla \widetilde{u} \|_{S(L^2;I)}
%\end{align*}
%where $\theta\in (0,1)$.
 
\ Hence, combining \eqref{SP4}, \eqref{SP5} and if $u\in B(\rho,K)$, we have
\begin{eqnarray}\label{SP7}
 \|H(\cdot,\widetilde{u},w)\|_{S'(\dot{H}^{-s_c};I)} \lesssim \left(M^\theta\varepsilon^{2-\theta}+K^\theta \rho^{2-\theta}\right)\rho
\end{eqnarray}
%and
\begin{eqnarray}\label{SP8}
\|H(\cdot,\widetilde{u},w)\|_{S'(L^2;I)}\lesssim \left(M^\theta\varepsilon^{2-\theta}+K^\theta \rho^{2-\theta}\right)K.
\end{eqnarray}
Furthermore, \eqref{SP6} and \eqref{ST1} imply
\begin{align}\label{SP9}
 \|\nabla H(\cdot,\widetilde{u},w)\|_{S'(L^2;I)} \lesssim \left(M^\theta\varepsilon^{2-\theta}+K^\theta \rho^{2-\theta}\right)K +\left( M^\theta \varepsilon^{1-\theta} + K^\theta \rho^{1-\theta} \right) \rho M. %E_1
\end{align}
%where $E_1 \lesssim  
%\left( M^\theta \varepsilon^{1-\theta} + K^\theta \rho^{1-\theta} \right) \rho M
%$.
%\begin{eqnarray*}
%E_1&\lesssim & 
%\left( M^\theta \varepsilon^{1-\theta} + K^\theta \rho^{1-\theta} \right) \rho M
%\end{eqnarray*}
Therefore, we deduce by \eqref{SP1}-\eqref{SP2} together with \eqref{SP7}- \eqref{SP8} that 
$$
\|G(w)\|_{S(\dot{H}^{s_c};I)}\leq  c\varepsilon+ cA\rho
$$
%and
$$
\|G(w)\|_{S(L^2;I)}\leq cM'+c\varepsilon +cAK,
 $$
 where we also used the hypothesis \eqref{PC22}-\eqref{PC33} and $A=M^\theta\varepsilon^{2-\theta}+K^\theta \rho^{2-\theta}$. We also have, using \eqref{SP3}, \eqref{SP9}
 \begin{equation*}
\|\nabla G(w)\|_{S(L^2;I)}\leq cM'+c\varepsilon +cAK+cB \rho M,
\end{equation*}
where $B= M^\theta \varepsilon^{1-\theta} + K^\theta \rho^{1-\theta}$.\\
Choosing $\rho=2c\varepsilon$, $K=3cM'$ and $\varepsilon_0$ sufficiently small such that 
$$
cA<\frac{1}{3}\;\;\;\;\textnormal{and}\;\;\;c(\varepsilon+B \rho M+K^\theta \rho^{2-\theta} M)<\frac{K}{3},
$$ 
we obtain
  \begin{equation*}
    	\|G(w)\|_{S(\dot{H}^{s_c};I)}\leq \rho\;\;\;\textnormal{and}\;\;\;\|G(w)\|_{S(L^2;I)}+\|\nabla G(w)\|_{S(L^2;I)}\leq K.
   \end{equation*}
The above calculations establish that $G$ is well defined on $B(\rho,K)$. The contraction property can be obtained by  similar arguments. Hence, by the Banach Fixed Point Theorem we obtain a unique solution $w$ on $I\times \mathbb{R}^N$ such that 
 $$
 \|w\|_{S(\dot{H}^{s_c};I)}\lesssim \varepsilon \;\;\;\textnormal{and}\;\;\;\|w\|_{S(L^2;I)}+\|w\|_{S(L^2;I)} \lesssim M'.
 $$ 
 Finally, it is easy to see that $u=\widetilde{u}+w$ is a solution to \eqref{INLS} satisfying \eqref{C} and \eqref{C1}. 
 
 \ To complete the proof we now show \eqref{ST1}. Indeed, we first show that 
 \begin{equation}\label{widetilde{u}}
 \|\nabla\widetilde{u}\|_{S(L^2;I)}\lesssim M.
 \end{equation}
 Using the same arguments as before, we have
  $$
  \|\nabla\widetilde{u}\|_{S(L^2;I)}\lesssim  \|\nabla\widetilde{u}_0\|_{L^2}+ \left\|\nabla(|x|^{-b}|\widetilde{u}|^2\widetilde{u})\right\|_{S'(L^2;I)}+\|\nabla e\|_{S'(L^2;I)}.
 $$
 Lemma \ref{LG3} implies
 \begin{eqnarray*}
  \|\nabla\widetilde{u}\|_{S(L^2;I)}&\lesssim& M+ \| \widetilde{u} \|^\theta_{L^\infty_tH^1_x}\| \widetilde{u} \|^{2-\theta}_{S(\dot{H}^{s_c};I)} \|\nabla \widetilde{u} \|_{S(L^2;I)} +\varepsilon\\
 &\lesssim& M+\varepsilon+  M^\theta \varepsilon_0^{2-\theta}\|\nabla \widetilde{u} \|_{S(L^2;I)}.
\end{eqnarray*}
Therefore, choosing $\varepsilon_0$ sufficiently small the linear term $M^\theta \varepsilon_0^{2-\theta}\|\nabla \widetilde{u} \|_{S(L^2;I)}$ may be absorbed by the left-hand term and we conclude the proof of \eqref{widetilde{u}}. Similar estimates also imply $\|\widetilde{u}\|_{S(L^2;I)}\lesssim M$. 
\end{proof}	
\end{proposition}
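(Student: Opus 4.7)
The plan is to write the sought solution as $u = \widetilde u + w$, so that the unknown perturbation $w$ satisfies
\[
i\partial_t w + \Delta w + H(x,\widetilde u, w) + e = 0, \qquad w(0,x) = u_0(x) - \widetilde u_0(x),
\]
where $H(x,\widetilde u, w) = |x|^{-b}\bigl(|\widetilde u + w|^2(\widetilde u + w) - |\widetilde u|^2 \widetilde u\bigr)$. By Duhamel's formula this is equivalent to the fixed point equation $w = G(w)$ with
\[
G(w)(t) = U(t)(u_0 - \widetilde u_0) + i\int_0^t U(t-s)\bigl[H(\cdot,\widetilde u, w) + e\bigr](s)\,ds,
\]
which I would solve by the Banach fixed point theorem on a closed ball
\[
B_{\rho,K} = \bigl\{\, w \in C(I;H^1) : \|w\|_{S(\dot H^{s_c};I)} \le \rho,\; \|w\|_{S(L^2;I)} + \|\nabla w\|_{S(L^2;I)} \le K \,\bigr\}
\]
with $\rho \sim \varepsilon$ and $K \sim M'$, equipped with a suitable Strichartz-type metric.

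Before running the contraction argument I would first establish the a priori bound $\|\widetilde u\|_{S(L^2;I)} + \|\nabla \widetilde u\|_{S(L^2;I)} \lesssim M$. This follows by applying the linear and inhomogeneous Strichartz estimates from Lemma \ref{Lemma-Str} to the integral equation satisfied by $\widetilde u$ (with source $-e$), bounding $|x|^{-b}|\widetilde u|^2 \widetilde u$ in $S'(L^2;I)$ and its gradient in $S'(L^2;I)$ by Lemmas \ref{LG1}(ii) and \ref{LG3}, and using $\|\widetilde u(t)\|_{H^1} \le M$, $\|\widetilde u\|_{S(\dot H^{s_c};I)} \le \varepsilon$ together with the hypothesis \eqref{PC33}. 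The term proportional to $\|\nabla\widetilde u\|_{S(L^2;I)}$ that arises carries the small factor $M^\theta\varepsilon^{2-\theta}$ and is absorbed into the left-hand side provided $\varepsilon_0$ is small.

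The contraction then requires three separate Strichartz estimates on $G(w)$, one for each component of the defining norms of $B_{\rho,K}$. Using the pointwise bound $|H(x,\widetilde u, w)| \lesssim |x|^{-b}(|\widetilde u|^2 |w| + |w|^3)$ from Remark \ref{nonlinerity}, together with the corresponding gradient estimate producing additional terms of the form $|x|^{-b-1}(|\widetilde u|^2 + |w|^2)|w|$, $|x|^{-b}(|\widetilde u|^2 + |w|^2)|\nabla w|$, and $|x|^{-b}(|\widetilde u| + |w|)|w||\nabla\widetilde u|$, I would apply Lemmas \ref{LG1} and \ref{LG3} together with Remarks \ref{RGP} and \ref{RSglobal} to control each piece in the dual Strichartz spaces $S'(\dot H^{-s_c};I)$ and $S'(L^2;I)$. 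Every resulting coefficient takes the form $M^\theta\varepsilon^{2-\theta} + K^\theta\rho^{2-\theta}$, which can be made arbitrarily small by choosing $\varepsilon_0 = \varepsilon_0(M,M')$. Combined with the hypotheses \eqref{PC22} and \eqref{PC33}, this allows $G$ to map $B_{\rho,K}$ into itself with $\rho = 2c\varepsilon$ and $K = 3cM'$; the contraction property is obtained by the same estimates applied to differences $H(\cdot,\widetilde u,w_1) - H(\cdot,\widetilde u,w_2)$.

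The main obstacle I expect is the gradient estimate, specifically the piece $|x|^{-b}(|\widetilde u| + |w|)|w||\nabla\widetilde u|$ in which the derivative lands on $\widetilde u$. Since $\|\nabla\widetilde u\|_{S(L^2;I)}$ is only controlled by $M$, not by $\varepsilon$, the requisite smallness must be extracted entirely from a product of $\dot H^{s_c}$ Strichartz norms of $\widetilde u$ and $w$, giving a coefficient of order $\bigl(M^\theta\varepsilon^{1-\theta} + K^\theta\rho^{1-\theta}\bigr)\rho\, M$. Balancing this against the large factor $M$ forces $\theta \in (0,2)$ to be taken sufficiently small (as in Lemma \ref{LG3}) and $\varepsilon_0$ to depend on both $M$ and $M'$, which is precisely the reason the a priori bound on $\|\nabla\widetilde u\|_{S(L^2;I)}$ has to be secured first. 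Once that step is in place, the rest is the same fixed point machinery already deployed in Proposition \ref{GWPH1}.
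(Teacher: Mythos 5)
Your proposal follows essentially the same route as the paper's proof: the same decomposition $u=\widetilde u+w$, the same fixed point map on $B_{\rho,K}$ with $\rho\sim\varepsilon$, $K\sim M'$, the same nonlinear estimates from Lemmas \ref{LG1}, \ref{LG3} and Remarks \ref{RGP}, \ref{RSglobal}, and the same a priori bound $\|\nabla\widetilde u\|_{S(L^2;I)}\lesssim M$ (which the paper also proves by absorption, merely deferring it to the end). You have also correctly isolated the delicate term where the derivative falls on $\widetilde u$, which is exactly the term handled by \eqref{SP6} and \eqref{SP9} in the paper.
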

\begin{remark}\label{RSP} %remark de short perturb
From Proposition \ref{STP}, we also have the following estimates:
\begin{equation}\label{RSP1}
\|H(\cdot,\widetilde{u},w)\|_{S'(\dot{H}^{-s_c}; I)}\leq C(M,M') \varepsilon
\end{equation}
and
\begin{equation}\label{RSP2}
\|H(\cdot,\widetilde{u},w)\|_{S'(L^2; I)}+\|\nabla H(\cdot,\widetilde{u},w)\|_{S'(L^2; I)}\leq C(M,M')\varepsilon^{2-\theta},
\end{equation}
with $\theta>0$ small enough. Indeed, the relations \eqref{SP7}, \eqref{SP8} and \eqref{SP9} imply
\begin{eqnarray*}
\|H(\cdot,\widetilde{u},w)\|_{S'(\dot{H}^{-s_c}; I)} \lesssim \left(M^\theta\varepsilon^{2-\theta}+K^\theta \rho^{2-\theta}\right)\rho,
\end{eqnarray*}
\begin{eqnarray*}
\|H(\cdot,\widetilde{u},w)\|_{S'(L^2; I)}\lesssim \left(M^\theta\varepsilon^{2-\theta}+K^\theta \rho^{2-\theta}\right)K
 \end{eqnarray*}
and
\begin{align*}
\|\nabla H(\cdot,\widetilde{u},w)\|_{S'(L^2; I)} \lesssim \left(M^\theta\varepsilon^{2-\theta}+K^\theta \rho^{2-\theta}\right)K+\left( M^\theta \varepsilon^{1-\theta} + K^\theta \rho^{1-\theta} \right) \rho M.
\end{align*}
%where $E_1\lesssim 
%\left( M^\theta \varepsilon^{1-\theta} + K^\theta \rho^{1-\theta} \right) \rho M.
%$
%\begin{eqnarray*}
% E_1\lesssim & 
%\left( M^\theta \varepsilon^{1-\theta} + K^\theta \rho^{1-\theta} \right) \rho M.
%\end{eqnarray*}
Therefore, the choice $\rho=2c\varepsilon$ and $K=3cM'$ in Proposition \ref{STP} yield \eqref{RSP1} and \eqref{RSP2}. 
\end{remark}

\ In the sequel, we prove the long-time perturbation result.
\begin{proposition}\label{LTP}{\bf (Long-time perturbation theory for the INLS)} 
Let $I\subseteq \mathbb{R}$ be a time interval containing zero and let $\widetilde{u}$ defined on $I\times \mathbb{R}^3$ be a solution (in the sense of the appropriated integral equation) to 
\begin{equation*}
i\partial_t \widetilde{u} +\Delta \widetilde{u} + |x|^{-b} |\widetilde{u}|^2 \widetilde{u} =e,
\end{equation*}  
with initial data $\widetilde{u}_0\in H^1(\mathbb{R}^3)$, satisfying 
\begin{equation}\label{HLP1} %hypot de long perturb 1
\sup_{t\in I}  \|\widetilde{u}\|_{H^1_x}\leq M \;\; \textnormal{and}\;\; \|\widetilde{u}\|_{S(\dot{H}^{s_c}; I)}\leq L,
\end{equation}
for some positive constants $M,L$. 
    
\indent  Let $u_0\in H^1(\mathbb{R}^3)$ such that 
\begin{equation}\label{HLP2}
\|u_0-\widetilde{u}_0\|_{H^1}\leq M'\;\; \textnormal{and}\;\; \|U(t)(u_0-\widetilde{u}_0)\|_{S(\dot{H}^{s_c}; I)}\leq \varepsilon,
\end{equation}
	for some positive constant $M'$ and some $0<\varepsilon<\varepsilon_1=\varepsilon_1(M,M',L)$.
	Moreover, assume also the following conditions
	\begin{equation*}
	\|e\|_{S'(L^2; I)}+\|\nabla e\|_{S'(L^2; I)}+  \|e\|_{S'(\dot{H}^{-s_c}; I)}\leq \varepsilon.
	\end{equation*}
	\indent Then, there exists a unique solution $u$ to \eqref{INLS} on $I\times \mathbb{R}^3$ with initial data $u_0$ at the time $t=0$ satisfying 
	
\begin{equation}\label{CLP} %conclu long per
\|u-\widetilde{u}\|_{S(\dot{H}^{s_c}; I)}\leq C(M,M',L)\varepsilon\;\;\;\;\;\;\;\textnormal{and}
\end{equation}
\begin{equation}\label{CLP1}
\|u\|_{S(\dot{H}^{s_c}; I)} +\|u\|_{S(L^2; I)}+\|\nabla u\|_{S(L^2; I)}\leq C(M,M',L).
\end{equation}
\begin{proof} First observe that since $\|\widetilde{u}\|_{S(\dot{H}^{s_c}; I)}\leq L$, given\footnote{$\varepsilon_0$ is given by the previous result and $\varepsilon$ to be determined later.} $\varepsilon<\varepsilon_0(M,2M')$ we can partition $I$ into $n = n(L,\varepsilon)$ intervals $I_j = [t_j ,t_{j+1})$ such that for each $j$, the quantity $\|\widetilde{u}\|_{S(\dot{H}^{s_c};I_j)}\leq \varepsilon$. Note that $M'$ is  being replaced by $2M'$, as the $H^1$-norm of the difference of two different initial data may increase in each iteration.

\ Again, we may assume, without loss of generality, that $0=\inf I$. Let $w$ be defined by $u = \widetilde{u} + w$, then $w$ solves IVP \eqref{IVPP} with initial time $t_j$. Thus, the integral equation in the interval $I_j = [t_j ,t_{j+1})$ reads as follows
\begin{equation*}
  w(t)=U(t-t_j)w(t_j)+i\int_{t_j}^{t}U(t-s)(H(x,\widetilde{u},w)+e)(s)ds,
\end{equation*}
where $H(x,\widetilde{u},w)=|x|^{-b} \left(|\widetilde{u}+w|^2 (\widetilde{u}+w)-|\widetilde{u}|^2 \widetilde{u}\right)$.	
	
\ Thus, choosing $\varepsilon_1$ sufficiently small (depending on $n$, $M$, and $M'$), we may
	apply Proposition \ref{STP} (Short-time perturbation theory) to obtain for each $0\leq j<n$ and all $\varepsilon<\varepsilon_1$, 
	\begin{equation}\label{LP1}
	\|u-\widetilde{u}\|_{S(\dot{H}^{s_c};I_j)}\leq C(M,M',j)\varepsilon
	\end{equation}
	and
	\begin{equation}\label{LP2}
	\|w\|_{S(\dot{H}^{s_c};I_j)}+\|w\|_{S'(L^2;I_j)}+\|\nabla w\|_{S'(L^2;I_j)}\leq C(M,M',j)
	\end{equation}
 provided we can show
 \begin{equation}\label{LP3}
 \|U(t-t_j)(u(t_j)-\widetilde{u}(t_j))\|_{S(\dot{H}^{s_c};I_j)}\leq C(M,M',j)\varepsilon\leq \varepsilon_0
 \end{equation}
 and
 \begin{equation}\label{LP4}
 \|u(t_j)-\widetilde{u}(t_j)\|_{H^1_x}\leq 2M',
 \end{equation}
 For each $0\leq j<n$.
 
  \ Indeed, by the Strichartz estimates \eqref{SE2} and \eqref{SE5}, we have   
  \begin{eqnarray*}
  \|U(t-t_j)w(t_j)\|_{S(\dot{H}^{s_c};I_j)}&\lesssim& \|U(t)w_0\|_{S(\dot{H}^{s_c}; I)}+\|H(\cdot,\widetilde{u},w)\|_{S'(\dot{H}^{-s_c};[0,t_j])}\\
  &&+\|e\|_{S'(\dot{H}^{-s_c};I)},
  \end{eqnarray*}
which implies by \eqref{RSP1} that	
 $$
  \|U(t-t_j)(u(t_j)-\widetilde{u}(t_j))\|_{S(\dot{H}^{s_c}; I_j)}\lesssim \varepsilon+\sum_{k=0}^{j-1}C(k,M,M')\varepsilon.
 $$
  
 \ Similarly, it follows from Strichartz estimates \eqref{SE1}, \eqref{SE3} and \eqref{RSP2} that
 \begin{eqnarray*}
 \|u(t_j)-\widetilde{u}(t_j)\|_{H^1_x}&\lesssim & \|u_0-\widetilde{u}_0\|_{H^1}+\|e\|_{S'(L^2; I)}+\|\nabla e\|_{S'(L^2;I)}\\
 &&+\| H(\cdot,\widetilde{u},w)\|_{S'(L^2;[0,t_j])}+\|\nabla H(\cdot,\widetilde{u},w)\|_{S'(L^2;[0,t_j])}\\
 &\lesssim& M'+\varepsilon+\sum_{k=0}^{j-1}C(k,M,M')\varepsilon^{2-\theta}.
 \end{eqnarray*}
 Taking $\varepsilon_1=\varepsilon(n,M,M')$ sufficiently small, we see that \eqref{LP3} and \eqref{LP4} hold and so, it implies \eqref{LP1} and \eqref{LP2}.
 
 \ Finally, summing this over all subintervals $I_j$ we obtain 
 $$
 \|u-\widetilde{u}\|_{S(\dot{H}^{s_c}; I)}\leq C(M,M',L)\varepsilon
 $$
 and
 $$
  \|w\|_{S(\dot{H}^{s_c}; I)}+\|w\|_{S'(L^2; I)}+\|\nabla w\|_{S'(L^2; I)}\leq C(M,M',L).
 $$
 This completes the proof.
 \end{proof} 
\end{proposition}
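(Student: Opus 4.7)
The plan is to reduce the long-time statement to repeated application of the short-time perturbation result (Proposition \ref{STP}), using a partition of the interval $I$ based on the $S(\dot{H}^{s_c})$-norm of the approximate solution $\widetilde{u}$. Since $\|\widetilde{u}\|_{S(\dot{H}^{s_c};I)}\leq L$, I can split $I$ into $n=n(L,\varepsilon_0)$ consecutive subintervals $I_j=[t_j,t_{j+1})$ on each of which $\|\widetilde{u}\|_{S(\dot{H}^{s_c};I_j)}\leq \varepsilon_0$, where $\varepsilon_0=\varepsilon_0(M,2M')$ is the smallness threshold from Proposition \ref{STP}. I allow $2M'$ rather than $M'$ in this threshold because the $H^1$-distance $\|u(t_j)-\widetilde{u}(t_j)\|_{H^1}$ may grow from one subinterval to the next, so I need to leave room for that growth when calling STP.

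Next I build $u$ inductively in $j$. On $I_0$ the hypotheses of Proposition \ref{STP} are directly inherited from \eqref{HLP2}, so STP produces $u$ on $I_0$ with estimates \eqref{C} and \eqref{C1}. To pass from $I_j$ to $I_{j+1}$, I examine the Duhamel formula for $w=u-\widetilde{u}$, which solves \eqref{IVPP} with data $w(t_j)$ at time $t_j$. Using Strichartz \eqref{SE1}--\eqref{SE5}, the nonlinear estimates of Lemmas \ref{LG1} and \ref{LG3}, and the refined bounds \eqref{RSP1}--\eqref{RSP2} summed over the previous subintervals, I expect
$$
\|U(t-t_j)w(t_j)\|_{S(\dot{H}^{s_c};I_j)}\lesssim \varepsilon+\sum_{k=0}^{j-1}C(M,M',k)\,\varepsilon,
$$
$$
\|w(t_j)\|_{H^1}\lesssim M'+\varepsilon+\sum_{k=0}^{j-1}C(M,M',k)\,\varepsilon^{2-\theta}.
$$
Choosing $\varepsilon_1=\varepsilon_1(n,M,M')$ small enough, the right-hand sides stay below $\varepsilon_0$ and $2M'$ respectively, so the hypotheses of Proposition \ref{STP} on $I_{j+1}$ are verified and the induction closes. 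Summing the per-subinterval output over all $n$ intervals then gives \eqref{CLP}--\eqref{CLP1}.

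The main obstacle is the potentially exponential (in $n$) growth of the constants $C(M,M',j)$: the small error $\varepsilon$ picks up an extra multiplicative constant at each step of the induction, so $\varepsilon_1$ must be chosen as a highly nonlinear function of the \emph{a priori} number $n=n(L,\varepsilon_0)$ of subintervals, which itself depends on $L$. The decisive ingredient that makes this work is the extra factor $\varepsilon^{2-\theta}$ (with $\theta>0$ small) coming from the nonlinear estimates in \eqref{RSP2}: it governs the $H^1$-growth of $w(t_j)$ from step to step and keeps that growth well under the $2M'$ buffer that was built into the choice of $\varepsilon_0$. A secondary technical point is that the forcing term $H(\cdot,\widetilde{u},w)$ must be handled in three different Strichartz spaces simultaneously ($S'(\dot{H}^{-s_c})$, $S'(L^2)$, and the gradient in $S'(L^2)$), which is exactly what Remark \ref{RSP} packages for direct use.
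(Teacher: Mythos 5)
Your proposal follows essentially the same route as the paper's proof: partition $I$ into $n=n(L,\varepsilon)$ subintervals with small $S(\dot{H}^{s_c})$-norm of $\widetilde{u}$, iterate the short-time perturbation result with the $2M'$ buffer, control the data at each step via Duhamel together with the estimates \eqref{RSP1}--\eqref{RSP2}, and choose $\varepsilon_1$ depending on $n$, $M$, $M'$. The argument is correct and matches the paper, including the role of the $\varepsilon^{2-\theta}$ gain in keeping the $H^1$-growth below $2M'$.
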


\section[Ground state and wave operator]{Properties of the ground state, energy bounds and wave operator}%GSS=ground state solution

\ In this section, we recall some properties that are related to our problem. In \cite{LG} the first author proved the following Gagliardo-Nirenberg inequality
\begin{equation}\label{GNI} %GNI= GAGLIARDO NIRENBERG INEQUALITY
\left\||x|^{-b}|u|^{4} \right\|_{L^1_x}\leq C_{GN}\|\nabla u\|^{3+b}_{L^2_x}\|u\|^{1-b}_{L^2_x},
\end{equation}
with the sharp constant (recalling $s_c=\frac{1+b}{2}$)
\begin{equation}\label{GNI1}%GNE1: gagliardo nerinberg inequality1
C_{GN}=\frac{4}{3+b}\left(\frac{1-b}{3+b}\right)^{s_c}\frac{1}{\|Q\|^{2}_{L^2}}
\end{equation}
where $Q$ is the ground state solution of \eqref{GSE}. Moreover, $Q$ satisfies the following relations 
\begin{equation}\label{GS1}
\|\nabla Q\|^2_{L^2}=\frac{3+b}{1-b}\|Q\|^2_{L^2}
\end{equation}
and 
\begin{equation}\label{GS2}
\left\||x|^{-b}|Q|^{4} \right\|_{L^1}=\frac{4}{3+b}\|\nabla Q\|^2_{L^2}.
\end{equation}
Note that, combining \eqref{GNI1}, \eqref{GS1} and \eqref{GS2} one has
\begin{equation}\label{GNI2}%GNE: gagliardo nerinberg inequality2
C_{GN}=\frac{4}{(3+b)\|\nabla Q\|^{2 s_c}_{L^2}\|Q\|^{2(1-s_c)}_{L^2}}.
\end{equation}
On the other hand, we also have 
\begin{equation}\label{EGS} % energy ground state
E[Q]=\frac{1}{2}\|\nabla Q\|^2_{L^2}-\frac{1}{4}\left\||x|^{-b}|Q|^{4}\right\|_{L^1}=\frac{s_c}{3+b}\|\nabla Q\|^2_{L^2}.
\end{equation}

\ The next lemma provides some estimates that will be needed for the compactness and rigidity results. 

\begin{lemma}\label{LGS} %{\bf (Lower bound on the convexity of the variance).} 
Let $v \in H^1(\mathbb{R}^3)$ such that 
 \begin{equation}\label{LGS1}
 \|\nabla v\|^{s_c}_{L^2}\|v\|_{L^2}^{1-s_c}\leq \|\nabla Q\|^{s_c}_{L^2}\|Q\|_{L^2}^{1-s_c}.
\end{equation}	
Then, the following statements hold
\begin{itemize}
\item [(i)] $\frac{s_c}{3 +b}\|\nabla v\|^2_{L^2}\leq E(v)\leq \frac{1}{2} \|\nabla v\|^{2}_{L^2}$,
\item [(ii)] $\|\nabla v\|^{s_c}_{L^2}\|v\|^{1-s_c}_{L^2}\leq w^{\frac{1}{2}} \|\nabla Q\|^{s_c}_{L^2}\|Q\|^{1-s_c}_{L^2}$, 
\item [(iii)] $16A E[v]\leq 8A \|\nabla v\|_{L^2}^2\leq 8 \|\nabla v\|^2_{L^2}-2(3+b)\left\||x|^{-b}|v|^{4}\right\|_{L^1}$,
\end{itemize}
where $w=\frac{E[v]^{s_c}M[v]^{1-s_c}}{E[Q]^{s_c}M[Q]^{1-s_c}}$ and $A=(1-w)$.
\begin{proof}
 (i) The second inequality is immediate from the definition of Energy \eqref{energy}. The first one is obtained by observing that
\begin{eqnarray*}
E[v] &\geq& \frac{1}{2}\|\nabla v\|^2_{L^2} -  \frac{C_{GN}}{4}\|\nabla v\|^{3+b}_{L^2}\|v\|^{1-b}_{L^2}\\
&=&\frac{1}{2}\|\nabla v\|^2_{L^2}\left(1- \frac{C_{GN}}{2} \|\nabla v\|^{2 s_c}_{L^2}\|v\|^{2(1-s_c)}_{L^2}  \right)\\
&\geq& \frac{1}{2}\|\nabla v\|^2_{L^2}\left(1- \frac{C_{GN}}{2} \|\nabla Q\|^{2 s_c}_{L^2}\|Q\|^{2(1-s_c)}_{L^2}  \right)\\
&=&\frac{1+b}{2(3+b)}\|\nabla v\|^2_{L^2}=\frac{ s_c}{3+b}\|\nabla v\|^2_{L^2},
\end{eqnarray*}	  
where we have used \eqref{GNI}, \eqref{GNI2} and \eqref{LGS1}.
	
 (ii) The first inequality in (i) yields $\|\nabla v\|^2_{L^2}\leq \frac{3 +b}{ s_c}E(v)$, multiplying it by $M[v]^\sigma=\|v\|_{L^2}^{2\sigma}$, where $\sigma=\frac{1-s_c}{s_c}$, we have
\begin{eqnarray*}
\|\nabla v\|^2_{L^2}\| v\|^{2\sigma}_{L^2}&\leq& \frac{3 +b}{s_c}E[v]M[v]^\sigma\\
 & = & \frac{3 +b}{ s_c}\frac{E[v]M[v]^\sigma}{E[Q]M[Q]^\sigma} E[Q]M[Q]^\sigma\\
&=& w \|\nabla Q\|^2\|Q\|^{2\sigma}_{L^2},
\end{eqnarray*}
where we have used \eqref{EGS}.
		
(iii) The first inequality obviously holds. Next, let $B=8 \|\nabla v\|^2_{L^2}-2(3+b)\left\||x|^{-b}|v|^{4}\right\|_{L^1}$. Applying the Gagliardo-Niremberg inequality \eqref{GNI} and item (ii) we deduce  
\begin{eqnarray*}
B&\geq& 8\|\nabla v\|^2_{L^2}- 2(3+b)C_{GN}\|\nabla v\|^{3+b}_{L^2}\|v\|^{1-b}_{L^2}   \\      %( \|\nabla v\|^{s_c}_{L^2}\|v\|^{1-s_c}_{L^2})^\alpha  \\
&\geq& \|\nabla v\|^2_{L^2}\left(8- 2(3+b)C_{GN}w \|\nabla Q\|^{2 s_c}_{L^2}\|Q\|^{2(1-s_c)}_{L^2}  \right)\\
&=& \|\nabla v\|^2_{L^2}8(1-w),
\end{eqnarray*} 
where in the last equality, we have used \eqref{GNI2}.	
\end{proof}	
\end{lemma}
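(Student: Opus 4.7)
The plan is to prove the three items in sequence, using the sharp Gagliardo--Nirenberg inequality \eqref{GNI}--\eqref{GNI2} together with the ground-state identities \eqref{GS1}--\eqref{GS2} and the energy formula \eqref{EGS}. The conceptual point is that the subcritical hypothesis \eqref{LGS1} lets us absorb the quartic potential term as a fixed fraction of the kinetic term $\|\nabla v\|_{L^2}^2$.

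For item (i), the upper bound $E(v)\le \tfrac{1}{2}\|\nabla v\|_{L^2}^2$ is immediate from the definition \eqref{energy} since the potential term is nonnegative. For the lower bound, I would apply \eqref{GNI} to the potential term and factor to obtain
$$E(v)\ge \tfrac{1}{2}\|\nabla v\|_{L^2}^2\Bigl(1-\tfrac{C_{GN}}{2}\|\nabla v\|_{L^2}^{2s_c}\|v\|_{L^2}^{2(1-s_c)}\Bigr).$$
The hypothesis \eqref{LGS1} then allows me to replace the $v$-quantity inside the parenthesis by the corresponding $Q$-quantity, and \eqref{GNI2} evaluates the resulting constant as $\tfrac{2}{3+b}$, yielding the coefficient $\tfrac{s_c}{3+b}$.

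For item (ii), I would rearrange the lower bound in (i) into $\|\nabla v\|_{L^2}^2\le\tfrac{3+b}{s_c}E(v)$, multiply by $\|v\|_{L^2}^{2(1-s_c)/s_c}$, and raise to the power $s_c$ to arrive at
$$\|\nabla v\|_{L^2}^{2s_c}\|v\|_{L^2}^{2(1-s_c)}\le \Bigl(\tfrac{3+b}{s_c}\Bigr)^{s_c}E(v)^{s_c}M(v)^{1-s_c}.$$
Inserting the factor $E(Q)^{s_c}M(Q)^{1-s_c}/E(Q)^{s_c}M(Q)^{1-s_c}$ to produce the definition of $w$ and using the ground-state identity \eqref{EGS} to convert $\bigl(\tfrac{3+b}{s_c}\bigr)^{s_c}E(Q)^{s_c}$ back into $\|\nabla Q\|_{L^2}^{2s_c}$ gives the claim after extracting a square root.

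For item (iii), the left inequality $16AE(v)\le 8A\|\nabla v\|_{L^2}^2$ is just twice the upper bound in (i), valid once $A=1-w\ge 0$. The right inequality is the content: I would bound the potential by \eqref{GNI}, split as $\|\nabla v\|_{L^2}^{3+b}\|v\|_{L^2}^{1-b}=\|\nabla v\|_{L^2}^2\cdot\|\nabla v\|_{L^2}^{2s_c}\|v\|_{L^2}^{2(1-s_c)}$, and apply (ii) to bound the last factor by $w\|\nabla Q\|_{L^2}^{2s_c}\|Q\|_{L^2}^{2(1-s_c)}$. The constants collapse via \eqref{GNI2} to $2(3+b)\||x|^{-b}|v|^4\|_{L^1}\le 8w\|\nabla v\|_{L^2}^2$, which rearranges to the stated bound. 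I do not expect any serious obstacle here; the argument is essentially careful bookkeeping with the sharp constants of the ground-state analysis, the only subtle point being that hypothesis \eqref{LGS1} must be invoked before \eqref{GNI2} so that the combination $\|\nabla Q\|_{L^2}^{2s_c}\|Q\|_{L^2}^{2(1-s_c)}$ appears in exactly the form where its sharp constant collapses cleanly.
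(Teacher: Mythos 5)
Your proposal is correct and follows essentially the same route as the paper: the sharp Gagliardo--Nirenberg inequality \eqref{GNI} with constant \eqref{GNI2}, the hypothesis \eqref{LGS1} to replace the $v$-quantity by the $Q$-quantity, the identity \eqref{EGS} for item (ii), and item (ii) itself to close item (iii). The only differences are presentational (you raise to the power $s_c$ explicitly in (ii), and you correctly flag that the first inequality in (iii) needs $A\geq 0$).
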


\ Now, applying the ideas introduced by C\^ote \cite{COTE} for the KdV equation (see also Guevara \cite{GUEVARA} Proposition $2.18$, with $(N,\alpha)=(3,2)$), we show the existence of the Wave Operator. Before stating our result, we prove the following lemma.

\begin{lemma}\label{LEWO}%lema exist de wav opera.
Let $0<b<1$. If $f$ and $g\in H^1(\mathbb{R}^3)$ then 
 \begin{itemize}
\item [(i)] $\left\|  |x|^{-b} |f|^{3}g \right\|_{L^1} 
 \leq c \|f\|^{3}_{L^{4}} \|g\|_{L^{4}} + c\|f\|^{3}_{L^{r}}\|g\|_{L^{r}}$
\item [(ii)] $\left\|  |x|^{-b} |f|^{3}g \right\|_{L^1} 
 \leq c \|f\|^{3}_{{H^1}} \|g\|_{H^1}$
\item [(iii)] $ \lim\limits_{|t|\rightarrow +\infty}  \left\|  |x|^{-b} |U(t) f|^{3}g \right\|_{L^1_x}=0.$ 
 \end{itemize}
where $\frac{12}{3-b}<r<6$.
% $r^*=\frac{2N}{N-2}$ if $N \geq 3$ and $r^*=2(\alpha+2)$ if $N=2$. 
\begin{proof} (i) We divide the estimate in $B^C$ and $B$. Applying the H\"older inequality, since $1=\frac{3}{4}+\frac{1}{4}$, one has
 \begin{eqnarray}\label{LEWO1}
 \left\|  |x|^{-b} |f|^{3}g \right\|_{L^1} 
 &\leq &\left\|  |x|^{-b} |f|^{3}g \right\|_{L^1(B^C)} 
  +\left\|  |x|^{-b} |f|^{3}g \right\|_{L^1(B)} \nonumber\\
 &\leq& \|f\|^{3}_{L^{4}}\|g\|_{L^{4}}+\|x|^{-b}|\|_{L^\gamma(B)}\|f\|^{3}_{L^{3\beta}}\|g\|_{L^r} \nonumber\\
 &=& \|f\|^{3}_{L^{4}}\|g\|_{L^{4}}+\|x|^{-b}|\|_{L^\gamma(B)} \|f\|^{3}_{L^{r}} \|g\|_{L^r},
\end{eqnarray}
where
 \begin{equation}\label{LEWO2}
1=\frac{1}{\gamma}+\frac{1}{\beta}+\frac{1}{r}\;\;\;\;\;\textnormal{and}\;\;\;\;\;\;r=3\beta.
\end{equation}
 To complete the proof we need to check that $\||x|^{-b}\|_{L^\gamma(B)}$ is bounded, i.e., $\frac{3}{\gamma}>b$ (see Remark \ref{RIxb}). In fact, we deduce from \eqref{LEWO2}
 $$
 \frac{3}{\gamma}=3-\frac{12}{r},
 $$
and thus, since $r>\frac{12}{3-b}$ we obtain the desired result ($\frac{3}{\gamma}-b>0$). 

\ (ii) By the Sobolev inequality \eqref{SEI22}, it is easy to see that $H^1 \hookrightarrow  L^{4} $ and $H^1 \hookrightarrow L^{r}$ (where $2<\frac{12}{3-b}<r<6$), then using \eqref{LEWO1} we get (ii).

\ (iii) Similarly as (i) and (ii), we get
 \begin{eqnarray}\label{lematecnico}
 \left\|  |x|^{-b} |U(t) f|^{3}g \right\|_{L^1_x}\leq c\|U(t)f\|^{\alpha+1}_{L^{4}} \|g\|_{H^1}+ c\|U(t)f\|^{3}_{L^{r}}\|g\|_{H^1},
\end{eqnarray}
for $\frac{12}{3-b}<r<6$. We now show that $\|U(t)f\|_{L^{r}_x}$ and $\|U(t)f\|_{L^{4}_x}$ $\rightarrow 0$ as 
$|t|\rightarrow +\infty$. Indeed, since $r$ and $4$ belong to $(2,6)$ then it suffices to show 
\begin{equation}\label{LEWO3}
\lim\limits_{|t|\rightarrow +\infty}\|U(t)f\|_{L^p_x}=0, 
\end{equation}
where $2<p<6$. Let $\widetilde{f}\in H^1\cap L^{p'}$, the Sobolev embedding \eqref{SEI22} and Lemma \ref{ILE} yield
 $$
 \|U(t)f\|_{L^{p}_x}\leq c \|f-\widetilde{f}\|_{{H}^1}+c|t|^{-\frac{3(p-2)}{2p}}\|\widetilde{f}\|_{L^{p'}}.
 $$
Since $p>2$ then the exponent of $|t|$ is negative and so approximating $f$ by $\widetilde{f}\in C^\infty_0$ in $H^1$, we deduce \eqref{LEWO3}.
\end{proof}
\end{lemma}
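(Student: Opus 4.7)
The plan is to handle parts (i) and (ii) by isolating the singularity of $|x|^{-b}$ at the origin through the decomposition $\mathbb{R}^3=B\cup B^C$, where $B$ is the closed unit ball. On the exterior region $B^C$ the weight is bounded, $|x|^{-b}\le 1$, so a direct H\"older inequality with exponent pair $(4/3,4)$ immediately yields
$$
\left\||x|^{-b}|f|^3 g\right\|_{L^1(B^C)}\le \|f\|_{L^4}^3\|g\|_{L^4}.
$$
On the ball $B$, I would use three-factor H\"older placing $|x|^{-b}\in L^\gamma(B)$, $|f|^3\in L^\beta(B)$, and $g\in L^r(B)$, subject to $\tfrac{1}{\gamma}+\tfrac{1}{\beta}+\tfrac{1}{r}=1$. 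Choosing $3\beta=r$ so that the $f$-norm collapses to $\|f\|_{L^r}^3$, a short computation gives $\tfrac{3}{\gamma}=3-\tfrac{12}{r}$; Remark \ref{RIxb} then requires $\tfrac{3}{\gamma}>b$ to ensure $|x|^{-b}\in L^\gamma(B)$, which is exactly the stated hypothesis $r>\tfrac{12}{3-b}$. Adding the two contributions produces (i), and part (ii) follows at once from (i) together with the Sobolev embeddings $H^1\hookrightarrow L^4$ and $H^1\hookrightarrow L^r$ (both valid for $r<6$), as recorded in Remark \ref{SEI2}.

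For (iii), I apply (i) with $f$ replaced by $U(t)f$ and observe that $g\in H^1$ already controls both $\|g\|_{L^4}$ and $\|g\|_{L^r}$. The problem therefore reduces to showing that $\|U(t)f\|_{L^p}\to 0$ as $|t|\to\infty$ for any fixed $p\in(2,6)$ and any $f\in H^1(\mathbb{R}^3)$. I would establish this by a density argument: pick $\widetilde f\in C_0^\infty(\mathbb{R}^3)\subset H^1\cap L^{p'}$ close to $f$ in $H^1$, and split
$$
\|U(t)f\|_{L^p}\le \|U(t)(f-\widetilde f)\|_{L^p}+\|U(t)\widetilde f\|_{L^p}.
$$
The first term is uniformly bounded in $t$ by a constant times $\|f-\widetilde f\|_{H^1}$ thanks to Sobolev embedding and the fact that $U(t)$ is an isometry on $H^1$, while the second decays like $|t|^{-\frac{3}{2}(1/p'-1/p)}\|\widetilde f\|_{L^{p'}}$ by the dispersive estimate of Lemma \ref{ILE}. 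The exponent is strictly negative precisely because $p>2$, so fixing $\widetilde f$ first to make the first term small and then letting $|t|\to\infty$ closes the argument.

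The only real technical subtlety is the careful bookkeeping of H\"older exponents in (i) that keeps $|x|^{-b}$ integrable near the origin; the restriction $r>\tfrac{12}{3-b}$ is forced by that computation, and nothing more delicate occurs in (ii)--(iii) beyond a routine combination of dispersion and density. The main obstacle I anticipate is verifying that this range of $r$ is genuinely nonempty while remaining strictly below the Sobolev endpoint $6$, but since $\tfrac{12}{3-b}<6$ whenever $b<1$, there is indeed a nontrivial interval of admissible $r$ on which both (i) and the Sobolev embeddings invoked in (ii)--(iii) work simultaneously.
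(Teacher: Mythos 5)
Your proposal is correct and follows essentially the same route as the paper: the $B$/$B^C$ splitting with the same three-factor H\"older bookkeeping forcing $r>\tfrac{12}{3-b}$ for (i), the Sobolev embeddings $H^1\hookrightarrow L^4, L^r$ for (ii), and the density-plus-dispersive-estimate argument for (iii). No gaps.
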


\begin{proposition}{\bf (Existence of Wave Operator)}\label{PEWO} Suppose $\phi \in H^1(\mathbb{R}^3)$ and, for some\footnote{Note that $(\frac{2 s_c}{3+b})^{\frac{s_c}{2}}<1$.} $0<\lambda \leq (\frac{2 s_c}{3+b})^{\frac{s_c}{2}}$, 
\begin{equation}\label{HEWO}
\|\nabla \phi\|^{2s_c}_{L^2}\|\phi\|^{2(1-s_c)}_{L^2}<\lambda^2\left(\frac{3+b}{ s_c}\right)^{s_c}E[Q]^{s_c}M[Q]^{1-s_c}.
\end{equation}
Then, there exists $u^+_0\in H^1(\mathbb{R}^3)$ such that $u$ solving \eqref{INLS} with initial data $u^+_0$ is global in $H^1(\mathbb{R}^3)$ with
\begin{itemize}
\item [(i)]$M[u]=M[\phi]$,
\item [(ii)] $E[u]=\frac{1}{2}\|\nabla \phi\|^2_{L^2}$,
\item [(iii)] $\lim\limits_{t\rightarrow +\infty} \|u(t)-U(t)\phi\|_{H^1_x}=0$,
\item [(iv)] $\|\nabla u(t)\|^{s_c}_{L^2}\| u(t)\|^{1-s_c}_{L^2}\leq \lambda\|\nabla Q\|^{s_c}_{L^2}\|Q\|^{1-s_c}_{L^2}$.
\end{itemize}
\end{proposition}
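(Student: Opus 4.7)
The plan is to construct $u$ as the unique fixed point of
$$
\Phi(v)(t) = U(t)\phi - i\int_t^{\infty} U(t-s)\,|x|^{-b}|v|^2 v(s)\,ds
$$
on a ball in a suitable Strichartz space restricted to a half-line $[T_0,\infty)$, to extend $u$ globally in time using Theorem \ref{TG}, and then to read off (i)--(iv) from this construction together with the conservation laws.

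By the Strichartz estimate \eqref{SE2}, $U(\cdot)\phi \in S(\dot H^{s_c})$, so $\|U(t)\phi\|_{S(\dot H^{s_c};[T,\infty))}\to 0$ as $T\to+\infty$ by dominated convergence in the $L^q$ time norm. Fix $\varepsilon>0$ small, depending on $\|\phi\|_{H^1}$, and choose $T_0$ so large that this quantity is less than $\varepsilon$. On the ball
$$
X = \bigl\{\,v : \|v\|_{S(\dot H^{s_c};[T_0,\infty))}\le 2\varepsilon,\ \|v\|_{S(L^2;[T_0,\infty))}+\|\nabla v\|_{S(L^2;[T_0,\infty))}\le 2c\|\phi\|_{H^1}\,\bigr\},
$$
the Strichartz inequalities \eqref{SE3}, \eqref{SE5}, together with Lemmas \ref{LG1} and \ref{LG3}, bound the nonlinear contribution to $\Phi(v)-U(\cdot)\phi$ by a multiple of $\|\phi\|_{H^1}^{\theta}\varepsilon^{2-\theta}$ in each relevant Strichartz norm. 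For $\varepsilon$ small, $\Phi$ maps $X$ into itself and is a contraction (with metric $\|\cdot\|_{S(L^2;[T_0,\infty))}+\|\cdot\|_{S(\dot H^{s_c};[T_0,\infty))}$), yielding a solution $u$ of the integral equation on $[T_0,\infty)$.

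The Duhamel identity $u(t)-U(t)\phi=-i\int_t^{\infty}U(t-s)|x|^{-b}|u|^2u(s)\,ds$ and the same nonlinear bounds give
$$
\|u(t)-U(t)\phi\|_{H^1_x}\lesssim \|u\|^{\theta}_{L^\infty_tH^1_x}\|u\|^{2-\theta}_{S(\dot H^{s_c};[t,\infty))}\bigl(\|u\|_{S(L^2;[t,\infty))}+\|\nabla u\|_{S(L^2;[t,\infty))}\bigr),
$$
which tends to $0$ as $t\to\infty$, establishing (iii). In particular $u(T)\to U(T)\phi$ in $H^1$ as $T\to\infty$; combined with the dispersive decay proved in Lemma \ref{LEWO}(i) one gets $\||x|^{-b}|U(T)\phi|^4\|_{L^1}\to 0$, hence $E[u(T)]\to\tfrac12\|\nabla\phi\|_{L^2}^2$ and $M[u(T)]\to\|\phi\|_{L^2}^2$. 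Mass and energy conservation (which hold on $[T_0,\infty)$) then yield (i) and (ii). To extend $u$ to all of $\mathbb{R}$, pick $T$ large and apply Theorem \ref{TG} with initial time $T$: property (EMC) follows from (i)--(ii) and the bound $\lambda^2\le (2s_c/(3+b))^{s_c}$ via
$$
E[u]^{s_c}M[u]^{1-s_c}=\tfrac{1}{2^{s_c}}\|\nabla\phi\|_{L^2}^{2s_c}\|\phi\|_{L^2}^{2(1-s_c)}<E[Q]^{s_c}M[Q]^{1-s_c},
$$
while (GFC) at $T$ follows from $\|\nabla u(T)\|_{L^2}^{s_c}\|u(T)\|_{L^2}^{1-s_c}\to \|\nabla\phi\|_{L^2}^{s_c}\|\phi\|_{L^2}^{1-s_c}<\lambda\|\nabla Q\|_{L^2}^{s_c}\|Q\|_{L^2}^{1-s_c}\le \|\nabla Q\|_{L^2}^{s_c}\|Q\|_{L^2}^{1-s_c}$ (recall $\lambda\le 1$). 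Set $u_0^+=u(0)$.

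Finally, (iv) follows from Lemma \ref{LGS}(ii) applied to $u(t)$, using conservation of the ratio $E[u]^{s_c}M[u]^{1-s_c}/(E[Q]^{s_c}M[Q]^{1-s_c})$ together with (i), (ii) and the ground-state identity \eqref{EGS}. The main obstacle in this scheme is the fixed-point construction: one must calibrate $\varepsilon$ and the radii of $X$ so that Lemmas \ref{LG1}--\ref{LG3} produce a genuine contraction whose asymptotic behaviour matches the linear flow $U(t)\phi$; once this is in place, the remaining statements reduce to the conservation of mass and energy plus the sharp identities for $Q$.
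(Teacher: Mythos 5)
Your proposal is correct and follows essentially the same route as the paper: a fixed-point argument on a half-line $[T_0,\infty)$ exploiting the smallness of $\|U(t)\phi\|_{S(\dot H^{s_c};[T_0,\infty))}$, item (iii) from the resulting Duhamel tail estimate, items (i)--(ii) from (iii) together with Lemma \ref{LEWO} and the conservation laws, and globalization plus (iv) via Theorem \ref{TG} and the ground-state identities. The only cosmetic difference is that you contract for $u$ itself whereas the paper contracts for the correction $w=u-U(t)\phi$ (and the decay of $\||x|^{-b}|U(t)\phi|^4\|_{L^1}$ is Lemma \ref{LEWO}(iii), not (i)); these are equivalent.
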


\begin{proof} We will divide the proof in two parts. First, we construct the wave operator for large time. Indeed, let $I_T=[T,+\infty)$ for $T\gg 1$ and define  
\begin{equation*}\label{IEWO1} % IE=integral equation wave oper 1
G(w)(t)=- i \int_t^{+\infty} U(t-s)(|x|^{-b}|w+U(t)\phi|^2 (w+U(t)\phi)(s)ds,\;\;t\in I_T
\end{equation*}
and
$$
B(T,\rho)=\{w\in C\left(I_T;H^1(\mathbb{R}^3)\right): \|w\|_{T}\leq \rho   \},
$$
where
$$
\|w\|_T=\|w\|_{S(\dot{H}^{s_c};I_T)}+\|w\|_{S(L^2;I_T)}+\|\nabla w\|_{S(L^2;I_T)}. 
$$ 
Our goal is to find a fixed point for $G$ on $B(T,\rho)$. 
  
\ Applying the Strichartz estimates \eqref{SE3} \eqref{SE5} and Lemmas \ref{LG1}-\ref{LG3}, we deduce  
\begin{align}\label{EWO1}    % esisten wave opera 1
\| G(w) \|_{S(\dot{H}^{s_c};I_T)} \lesssim &  \| w+U(t)\phi\|^\theta_{L^\infty_{T}H^1_x}\| w+U(t)\phi \|^{2-\theta}_{S(\dot{H}^{s_c};I_T)}\| w+U(t)\phi \|_{S(\dot{H}^{s_c};I_T)}
\end{align}
\begin{align}\label{EWO2}
\| G(w) \|_{S(L^2;I_T)} \lesssim &  \| w+U(t)\phi\|^\theta_{L^\infty_{T}H^1_x}\| w+U(t)\phi \|^{2-\theta}_{S(\dot{H}^{s_c};I_T)}\| w+U(t)\phi \|_{S(L^2;I_T)}
\end{align}
and
\begin{align}\label{EWO3}
\|\nabla G(w) \|_{S(L^2;I_T)} \lesssim &  \| w+U(t)\phi\|^\theta_{L^\infty_{T}H^1_x}\| w+U(t)\phi \|^{2-\theta}_{S(\dot{H}^{s_c};I_T)}\| \nabla(w+U(t)\phi) \|_{S(L^2;I_T)}  
\end{align}	  
Thus,
\begin{eqnarray*}\label{EWO4}
\|G(w)\|_{T} &\lesssim &  \| w+U(t)\phi\|^\theta_{L^\infty_{T}H^1_x}\| w+U(t)\phi \|^{2-\theta}_{S(\dot{H}^{s_c};I_T)}\| w+U(t)\phi \|_{T}. \nonumber  \\
\end{eqnarray*}
   
\ Since\footnote{Note that \eqref{U(t)phi} is possible not true using the norm $L^{\infty}_{I_T}L^{\frac{6}{3-2s_c}}_x$ and for this reason we remove the pair $\left(\infty,\frac{6}{3-2s_c}\right)$ in the definition of $\dot{H}^{s}$-admissible pair.} 
\begin{equation}\label{U(t)phi}
\| U(t)\phi \|_{S(\dot{H}^{s_c};I_T)}\rightarrow 0
\end{equation}
as $T\rightarrow +\infty$, we can find $T_0>0$ large enough and $\rho>0$ small enough such that $G$ is well defined on $B(T_0,\rho)$. The same computations show that $G$ is a contraction on $B(T_0,\rho)$. Therefore, $G$ has a unique fixed point, which we denote by $w$. 
 
\ On the other hand, from \eqref{EWO1} and since 
$$
\| w+U(t)\phi\|_{L^\infty_{T}H^1_x}\leq \|w\|_{H^1} +\|\phi\|_{H^1}<+\infty,
$$ 
one has (recalling $G(w)=w$)
\begin{eqnarray*}    % esisten wave opera 1
\| w \|_{S(\dot{H}^{s_c};I_T)} &\lesssim &  \| w+U(t)\phi \|^{2-\theta}_{S(\dot{H}^{s_c};I_T)}\| w+U(t)\phi \|_{S(\dot{H}^{s_c};I_T)}\\
 &\lesssim &  A\| w\|_{S(\dot{H}^{s_c};I_T)} +A\|U(t)\phi \|_{S(\dot{H}^{s_c};I_T)}
\end{eqnarray*}
where $A=\| w+U(t)\phi \|^{2-\theta}_{S(\dot{H}^{s_c};I_T)}$. In addition, if $\rho$ has been chosen small enough and since $\|U(t)\phi\|_{S(\dot{H}^{s_c};I_T)}$ is also sufficiently small for $T$ large, we deduce
$$
A\leq c\| w\|^{2-\theta}_{S(\dot{H}^{s_c};I_T)}+c\| U(t)\phi \|^{2-\theta}_{S(\dot{H}^{s_c};I_T)}<\frac{1}{2},
$$
and so (using the last two inequalities)
$$
\frac{1}{2} \| w \|_{S(\dot{H}^{s_c};I_T)} \lesssim A \|U(t)\phi \|_{S(\dot{H}^{s_c};I_T)},
$$
which implies,
\begin{equation}\label{EWO5}
  \| w \|_{S(\dot{H}^{s_c};I_T)}\rightarrow 0\;\;\;\;\textnormal{as}\;\;\;\; T\rightarrow +\infty.
\end{equation} 
 Hence, \eqref{EWO2}, \eqref{EWO3} and $\eqref{EWO5}$ also yield that\footnote{Observe that $\| w+U(t)\phi \|_{S(\dot{H}^{s_c};I_T)}\leq \| w \|_{S(\dot{H}^{s_c};I_T)}+\| U(t)\phi \|_{S(\dot{H}^{s_c};I_T)} \rightarrow 0$ as $T\rightarrow +\infty$ by \eqref{EWO5} and $\| w+U(t)\phi\|^\theta_{L^\infty_{T}H^1_x}, \| w+U(t)\phi \|_{S(L^2;I_T)}, \| \nabla(w+U(t)\phi) \|_{S(L^2;I_T)} <\infty$ since $w\in B(T,\rho)$ and $\phi \in H^1(\mathbb{R}^3)$.}
$$
 \| w \|_{S(L^{2};I_T)}\;,\,\|\nabla
 w \|_{S(L^{2};I_T)}\rightarrow 0\;\;\;\;\textnormal{as}\;\;\;\; T\rightarrow +\infty,
 $$
and finally
\begin{equation}\label{EWO6}
   \|w\|_{T}\rightarrow 0 \;\; \textnormal{as}\;\; T\rightarrow +\infty.  
 \end{equation}
   
  \ Next, we claim that $u(t)=U(t)\phi+w(t)$ satisfies \eqref{INLS} in the time interval $[T_0,\infty)$. To do this, we need to show that
 \begin{equation}\label{CWO} %calim de wave operat
 u(t)=U(t-T_0)u(T_0)+i\int_{T_0}^{t}U(t-s)(|x|^{-b}|u|^2 u)(s)ds,
 \end{equation}
 for all $t\in [T_0,\infty)$. Indeed, since
  	$$
  	w(t)=- i \int_t^\infty U(t-s)|x|^{-b}|w+U(t)\phi|^2 (w+U(t)\phi)(s)ds,
  $$
then 
  \begin{eqnarray*}
   	U(T_0-t)w(t)&=&- i \int_t^\infty U(T_0-s)|x|^{-b}|w+U(t)\phi|^2 (w+U(t)\phi)(s)ds\\
   	&=& i\int_{T_0}^t U(T_0-s)|x|^{-b}|w+U(t)\phi|^2 (w+U(t)\phi)(s)ds+w(T_0),
  \end{eqnarray*}
  and so applying $U(t-T_0)$ on both sides, we get
  $$
  w(t)=U(t-T_0)w(T_0)+ i \int_{T_0}^t U(t-s)|x|^{-b}|w+U(t)\phi|^2 (w+U(t)\phi)(s)ds.
  $$
  Finally, adding $U(t)\phi$ in both sides of the last equation, we deduce \eqref{CWO}.
  
 \ Now we show relations (i)-(iv). Since $u(t)=U(t)\phi+w$ then  
 \begin{align}\label{EWO7}
 \|u(t)-U(t)\phi\|_{L^\infty_TH^1_x}=\|w\|_{L^\infty_TH^1_x}\leq c\|w\|_{S(L^2;I_T)}+c\|\nabla w\|_{S(L^2;I_T)}\leq c\|w\|_{T}
  \end{align}
 and so from $\eqref{EWO2}$ we obtain (iii). Furthermore, using \eqref{EWO7} it is clear that
 \begin{equation}\label{EWO81}
 \lim_{t\rightarrow\infty}\|u(t)\|_{L^2_x}=\| \phi\|_{L^2}.
 \end{equation}
 and
 \begin{equation}\label{EWO8}
 \lim_{t\rightarrow\infty}\|\nabla u(t)\|_{L^2_x}=\|\nabla \phi\|_{L^2}.
 \end{equation}
By the mass conservation \eqref{mass} we have $\|u(t)\|_{L^2}=\|u(T_0)\|_{L^2}$ for all $t$, so from \eqref{EWO81} we deduce $\|u(T_0)\|_{L^2}=\|\phi\|_{L^2}$, i.e., item (i) holds. On the other hand, it follows from Lemma \ref{LEWO} (ii) 
 \begin{eqnarray*}
\left\| |x|^{-b}|u(t)|^{4} \right\|_{L^1_x}&\leq& c\left\| |x|^{-b}|u(t)-U(t)\phi|^{4} \right\|_{L^1_x}+c\left\| |x|^{-b}|U(t)\phi|^{4} \right\|_{L^1_x}\\
&\leq & c\left\|u(t)-U(t)\phi| \right\|^{4}_{H^1_x}+c\left\| |x|^{-b}|U(t)\phi|^{4} \right\|_{L^1_x},
\end{eqnarray*}
which goes to zero as $t\rightarrow +\infty$, by item (iii) and Lemma \ref{LEWO} (iii), i.e.
\begin{equation}\label{EWO9}
 \lim_{t\rightarrow\infty}\left\| |x|^{-b}|u(t)|^{4} \right\|_{L^1_x}=0.
 \end{equation} 
Combining \eqref{EWO8} and \eqref{EWO9}, it is easy to deduce (ii).
 
 \ Next, in view of \eqref{HEWO}, (i) and (ii) we have
 $$
 E[u]^{s_c}M[u]^{1-s_c}=\frac{1}{2^{s_c}}\|\nabla \phi\|^{2s_c}_{L^2}\|\phi\|^{2(1-s_c)}_{L^2}<\lambda^2\left(\frac{3+b}{2 s_c}\right)^{s_c}E[Q]^{s_c}M[Q]^{1-s_c}
  $$
 and by our choice of $\lambda$ we conclude
 \begin{equation*}\label{EWO10}
 E[u]^{s_c}M[u]^{1-s_c}<E[Q]^{s_c}M[Q]^{1-s_c}.
 \end{equation*}
 Moreover, from \eqref{EWO81}, \eqref{EWO8} and \eqref{HEWO}
\begin{eqnarray*}
\lim_{t\rightarrow \infty}\|\nabla u(t)\|^{2s_c}_{L^2_x}\|u(t)\|^{2(1-s_c)}_{L^2_x}&=&\|\nabla \phi\|^{2s_c}_{L^2}\|\phi\|^{2(1-s_c)}_{L^2}\\
&<& \lambda^2\left(\frac{3+b}{ s_c}\right)^{s_c}E[Q]^{s_c}M[Q]^{1-s_c}\\
&=&\lambda^2\|\nabla Q\|^{2s_c}_{L^2}\|Q\|^{2(1-s_c)}_{L^2},
\end{eqnarray*}
where we have used \eqref{EGS}. Thus, one can take $T_1>0$ sufficiently large such that
\begin{equation*}\label{EWO11}
\|\nabla u(T_1)\|^{s_c}_{L^2_x}\|u(T_1)\|^{1-s_c}_{L^2_x}<\lambda \|\nabla Q\|^{s_c}_{L^2}\|Q\|^{1-s_c}_{L^2}.
\end{equation*}
Therefore, since $\lambda<1$, we deduce that relations \eqref{EMC} and \eqref{GFC} hold with $u_0=u(T_1)$ and so, by Theorem \ref{TG}, we have in fact that $u(t)$ constructed above is a global solution of \eqref{INLS}.  %we can envolve $u(t)$ from back to the initial time $0$. 
\end{proof}		
\begin{remark}\label{backward}
A similar Wave Operator construction also holds when the time limit is taken as $t\rightarrow -\infty$ (backward in time).
\end{remark}

%%%%%%%%%%%%%%%%%%%%%%%%%%%%%%%%%%%%%%%%%%%%%%%%%%%%%%%%%%%%%%%%%%%%%%%%%%%%%%%%%%%%%%%%%%%%%%%%%%%%%%%%%%%%%5
%   WAVE OPERATORS
%%%%%%%%%%%%%%%%%%%%%%%%%%%%%%%%%%%%%%%%%%%%%%%%%%%%%%%%%%%%%%%%%%%%%%%%%%%%%%%%%%%%%%%%%%%%%%%%%%%%%%%%%%%%%5

%%%%%%%%%%%%%%%%%%%%%%%%%%%%%%%%%%%%%%%%%%%%%%%%%%%%%%%%%%%%%%%%%%%%%%%%%%%%%%%%%%%%%%%%%%%%%%%%%%%%%%%%%%%%%%%%%%%%%%%%

%%%%%%%%%%%%%%%%%%%%%%%%%%%%%%%%%%%%%%%%%%%%%%%%%%%%%%%%%%%%%%%%%%%%%%%%%%%%%%%%%%%%%%%%%%%%%%%%%%%%%%%%%%%%%%%%%%%%%%%%

%%%%%%%%%%%%%%%%%%%%%%%%%%%%%%%%%%%%%%%%%%%%%%%%%%%%%%%%%%%%%%%%%%%%%%%%%%%%%%%%%%%%%%%%%%%%
%%%%%%%%%%%%%%%%%%%%%%%%%%%%%%%%%%%%%%%%%%%%%%%%%%%%%%%%%%%%%%%%%%%%%%%%%%%%%%%%%%%%%%%%%%%%

\section{Existence and compactness of a critical solution}

\ The goal of this section is to construct a critical solution (denoted by $u_c$) of \eqref{INLS}. We divide the study in two parts, first we establish a profile decomposition result and also an Energy Pythagorean expansion for such decomposition. In the sequel, using the results of the first part we construct $u_c$ and discuss some of its properties.

\ We start this section recalling some elementary inequalities (see G\'erard \cite{Ge98} inequality (1.10) and Guevara \cite{GUEVARA} page 217). Let $(z_j)\subset\mathbb{C}^M$ with $M\geq 2$. For all $q>1$ there exists $C_{q,M}>0$ such that 
\begin{equation}\label{FI}% first inequality
 \left|\;\left | \sum_{j=1}^M z_j \right|^q-\sum_{j=1}^M|z_j|^q \right|  \leq C_{q,M}\sum_{j\neq k}^{M} |z_j| |z_k|^{q-1},
\end{equation}  
and for $\beta>0$ there exists a constant $C_{\beta,M}>0$ such that 
%for all $(z_j)_{1\leq j\leq M}\subset \mathbb{C}^M$,
\begin{equation}\label{EIerror} %elemet ineq error
\left| \left|\sum_{j=1}^{M}z_j\right|^\beta\sum_{j=1}^{M}z_j-\sum_{j=1}^{M}  |z_j|^\beta z_j\right|\leq C_{\beta,M}\sum_{j=1}^{M}\sum_{1\leq j\neq k\leq M}|z_j|^\beta|z_k|.
\end{equation} 

%%%%%%%%%%%%%%%%%%%%%%%%%%%%%%%%%%%%%%%%%%%%%%%%%%%%%%%%%%%%%%%%%%%%%%%%%%%%%%%%%%%%%%%%%%%%%%%%%%%%%%%%%%%%%%%%%%%%%%%%%%%%%%%%%%%%%%%%%%%%%555555555555555
% PROFILE DECOMPOSITION
%%%%%%%%%%%%%%%%%%%%%%%%%%%%%%%%%%%%%%%%%%%%%%%%%%%%%%%%%%%%%%%%%%%%%%%%%%%%%%%%%%%%%%%%%%%%%%%%%%%%%%%%%%%%%%%%%%%%%%%%%%%%%%%%%%%%%%%%%%%%%%%5555555555555

\subsection{Profile expansion}

This subsection contains a profile decomposition and  an energy Pythagorean expansion results. We use similar arguments as the ones in Holmer-Roudenko \cite[Lemma $5.2$]{HOLROU} (see also Fang-Xie-Cazenave \cite[Theorem 5.1]{JIANCAZENAVE}, with $(N,\alpha)=(3,2)$) and, for the sake of completeness, we provide the details here. 
\begin{proposition}\label{LPD} {\bf (Profile decomposition)}% LPD=LINEAR PROF DECOMP
	Let $\phi_n(x)$ be a radial uniformly bounded sequence in $H^1(\mathbb{R}^3)$. Then for each $M\in \mathbb{N}$ there exists a  subsequence of $\phi_n$ (also denoted by $\phi_n$), such that, for each $1\leq j\leq M$, there exist a profile $\psi^j$ in $H^1(\mathbb{R}^3)$, a sequence $t_n^j$ of time shifts and a sequence $W_n^M$ of remainders in $H^1(\mathbb{R}^3)$, such that	
	\begin{equation}\label{Aproximation}
	\phi_n(x)=\sum_{j=1}^{M}U(-t_n^j)\psi^j(x)+W_n^M(x)
	\end{equation}
	with the properties: 	  	
 \begin{itemize}
  \item \textsl{Pairwise divergence} for the time sequences. For $1\leq k\neq j\leq M$,
 \begin{equation}\label{PD} %PD pairse diverg
  \lim\limits_{n \rightarrow +\infty}|t_n^j-t_n^k|=+\infty.
  \end{equation}  % AS=asymp small
  \item \textsl{Asymptotic smallness} for the remainder sequence\footnote{Recalling that $s_c=\frac{1+b}{2}$.} 
		
\begin{equation}\label{AS}
\lim\limits_{M \rightarrow +\infty}\left(\lim\limits_{n \rightarrow +\infty}\|U(t)W_n^M\|_{S(\dot{H}^{s_c})}\right)=0.
\end{equation}
		
\item \textsl{Asymptotic Pythagoream expansion}. For fixed $M\in \mathbb{N}$ and any $s\in [0,1]$, we have
		
\begin{equation}\label{PDNHs}%PDNHs prof deco norm Hs  
\|\phi_n\|^2_{\dot{H}^s}=\sum_{j=1}^{M}\|\psi^j\|^2_{\dot{H}^s}+\|W_n^M\|^2_{\dot{H}^s}+o_n(1)
 \end{equation}
 where $o_n(1) \rightarrow 0$ as $n\rightarrow +\infty$.
 \end{itemize}
  \begin{proof}  Let $C_1>0$ such that $\|\phi_n\|_{H^1}\leq C_1$.
  For every $(a,r)$ $\dot{H}^{s_c}$-admissible we can define $r_1=2r$ and $a_1=\frac{4r}{r(3-2s_c)-3}$. Note that $(a_1,r_1)$ is also $\dot{H}^{s_c}$-admissible, then combining the interpolation inequality with $\eta=\frac{3}{r(3-2s_c)-3}\in (0,1)$ and the Strichartz estimate \eqref{SE2}, we have
 \begin{eqnarray}\label{LPD1} %linear prof decomp 1
  \|U(t)W_n^M\|_{L_t^aL^r_x}&\leq&\|U(t)W_n^M\|^{1-\eta}_{L_t^{a_1}L^{r_1}_x}\|U(t)W_n^M\|^\eta_{L_t^\infty L^{\frac{6}{3-2s_c}}_x}\nonumber\\
  & \leq & \|W_n^M\|^{1-\eta}_{\dot{H}^{s_c}}\|U(t)W_n^M\|^\eta_{L_t^\infty L^{\frac{6}{3-2s_c}}_x}.
\end{eqnarray}
 Since we will have $\|W_n^M\|_{\dot{H}^{s_c}}\leq C_1$, then we need to show that the second norm in the right hand side of \eqref{LPD1} goes to zero as $n$ and $M$ go to infinite, that is
 \begin{equation}\label{LPD2}
\lim\limits_{M \rightarrow +\infty}\left(\limsup\limits_{n \rightarrow +\infty}\|U(t)W_n^M\|_{L_t^\infty L^{\frac{6}{3-2s_c}}_x}\right)=0.
\end{equation}

\ First we construct $\psi^1_n$, $t_n^1$ and $W_n^1$. Let
 $$
 A_1=\limsup \limits_{n\rightarrow +\infty} \|U(t)\phi_n\|_{L_t^\infty L^{\frac{6}{3-2s_c}}_x}.
 $$
If $A_1=0$, the proof is complete with $\psi^j=0$ for all $j=1,\dots,M$.  
Assume that $A_1>0$. Passing to a subsequence, we may consider $A_1=\lim \limits_{n\rightarrow +\infty} \|U(t)\phi_n\|_{L_t^\infty L^{\frac{6}{3-2s_c}}_x}$. We claim that there exist a time sequence $t_n^1$ and $\psi^1$ such that $U(t_n^1)\phi_n \rightharpoonup \psi^1$ and
\begin{equation}\label{LPD22}
\beta C_1^ { \frac{3-2s_c}{2s_c(1-s_c)} }\|\psi^1\|_{\dot{H}^{s_c}}\geq A_1^{\frac{3-2s_c^2}{2s_c(1-s_c)}}, 
\end{equation} 
where $\beta>0$ is independent of $C_1$, $A_1$ and $\phi_n$.
Indeed, let $\zeta\in C^\infty_0(\mathbb{R}^3)$ a real-valued and radially symmetric function such that $0\leq \zeta \leq 1$, $\zeta(\xi)=1$ for $|\xi|\leq 1$ and $\zeta(\xi)=0$ for $|\xi|\geq 2$. Given $r>0$, define $\chi_r$ by $\widehat{\chi_r}(\xi)=\zeta(\frac{\xi}{r})$. From the Sobolev embedding \eqref{SEI} and since the operator $U(t)$ is an isometry in $H^{s_c}$, we deduce (recalling $0<s_c<1$)    
\begin{align*}\label{LPD23}
 \|U(t)\phi_n -\chi_r*U(t)\phi_n\|^2_{L^\infty_tL_x^{\frac{6}{3-2s_c}}} &\leq c\|U(t)\phi_n -\chi_r*U(t)\phi_n\|^2_{L^\infty_tH_x^{s_c} } \nonumber \\
 &\leq c \int |\xi|^{2s_c}|(1-\widehat{\chi_r})^2|\widehat{\phi}_n(\xi)|^2d\xi  \nonumber \\
 &\leq c\int_{|\xi|>r} |\xi|^{-2(1-s_c)} |\xi|^2 |\widehat{\phi}_n(\xi)|^2d\xi \nonumber \\
 &\leq c r^{-2(1-s_c)}\|\phi\|^2_{\dot{H}^1} \leq c r^{-2(1-s_c)}C_1^2.
 \end{align*}
Choosing    
\begin{equation}\label{LPD24}
 r=\left(\frac{4\sqrt{c}C_1}{A_1}\right)^{\frac{1}{1-s_c}}
\end{equation}
 and for $n$ large enough we have
 \begin{equation}\label{LPD3}
\|\chi_r*U(t)\phi_n\|_{L^\infty_tL_x^{\frac{6}{3-2s_c}}}\geq \frac{A_1}{2}.
\end{equation} 
Note that, from the standard interpolation in Lebesgue spaces
 \begin{eqnarray}\label{LPD4}
 \|\chi_r*U(t)\phi_n\|^3_{L^\infty_tL_x^{\frac{6}{3-2s_c}}}&\leq&\|\chi_r*U(t)\phi_n\|^{3-2s_c}_{L^\infty_tL_x^2} \|\chi_r*U(t)\phi_n\|^{2s_c}_{L^\infty_tL_x^\infty}\nonumber  \\
 &\leq & C_1^{3-2s_c}\|\chi_r*U(t)\phi_n\|^{2s_c}_{L^\infty_tL_x^\infty},
\end{eqnarray}
 thus inequalities \eqref{LPD3} and \eqref{LPD4} lead to
 $$
 \|\chi_r*U(t)\phi_n\|_{L^\infty_tL_x^\infty}\geq \left(\frac{A_1}{2C_1^{\frac{3-2s_c}{3}}}  \right)^{\frac{3}{2s_c}}.
 $$
 It follows from the radial Sobolev Gagliardo-Nirenberg inequality (since all $\phi_n$ are radial functions and so are $\chi_r*U(t)\phi_n$) that
\begin{eqnarray*}
\|\chi_r*U(t)\phi_n\|_{L^\infty_tL_x^\infty(|x|\geq R)}&\leq &\frac{1}{R} \|\chi_r*U(t)\phi_n\|^{\frac{1}{2}}_{L^2_x}\|\nabla(\chi_r*U(t)\phi_n)\|^{\frac{1}{2}}_{L^2_x}\leq \frac{C_1}{R},
\end{eqnarray*} 
 which implies for $R>0$ sufficiently large
 $$
 \|\chi_r*U(t)\phi_n\|_{L^\infty_tL_x^\infty(|x|\leq R)}\geq \frac{1}{2} \left(\frac{A_1}{2C_1^{\frac{3-2s_c}{3}}}  \right)^{\frac{3}{2s_c}},
 $$
 where we have used the two last inequalities. Now, let $t_n^1$ and $x_n^1$, with $|x_n^1|\leq R$, be sequences such that for each $n\in \mathbb{N}$  
 $$
 \left|\chi_r*U(t_n^1)\phi_n(x_n^1)\right|\geq \frac{1}{4}\left(\frac{A_1}{2C_1^{\frac{3-2s_c}{3}}}  \right)^{\frac{3}{2s_c}}
 $$
 or 
\begin{equation}\label{A_1}
 \frac{1}{4}\left(\frac{A_1}{2C_1^{\frac{3-2s_c}{3}}}  \right)^{\frac{3}{2s_c}}\leq \left|\int \chi_r(x_n^1-y)U(t_n^1)\phi_n(y)dy\right|. 
 \end{equation}
 On the other hand, since $\|U(t_n^1)\phi_n\|_{H^1}=\|\phi_n\|_{H^1}\leq C_1$  then $U(t^1_n)\phi_n$ converges weakly in $H^1$, i.e., there exists $\psi^1$ a radial function such that (up to a subsequence) $U(t_n^1)\phi_n \rightharpoonup \psi^1$ in $H^1$ and $\|\psi^1\|_{H^1}\leq \limsup \limits_{n\rightarrow +\infty}\|\phi_n\|_{H^1}\leq C_1$. In addition, $x_n^1\rightarrow x^1$ (also up to a subsequence) since $x_n^1$ is bounded. Hence the inequality \eqref{A_1}, the Plancherel formula and the Cauchy-Schwarz inequality yield
 $$ 
 \frac{1}{8}\left(\frac{A_1}{2C_1^{\frac{3-2s_c}{3}}}  \right)^{\frac{3}{2s_c}}\leq \left|\int \chi_r(x^1-y) \psi^1(y)dy \right|\leq \|\chi_r\|_{\dot{H}^{-s_c}}\|\psi^1\|_{\dot{H}^{s_c}},
 $$
 which implies $\frac{1}{8}\left(\frac{A_1}{2C_1^{\frac{3-2s_c}{3}}}  \right)^{\frac{3}{2s_c}}\leq cr^{\frac{3-2s_c}{2}}\|\psi^1\|_{\dot{H}^{s_c}}$, where we have used 
 $$
 \|\chi_r\|_{\dot{H}^{-s_c}}=\left(\int_{0<|\xi|<2r}|\xi|^{-2s_c}|\widehat{\chi_r}(\xi)|^2d\xi\right)^\frac{1}{2}\leq c\left(\int_{0}^{2r}\rho^{-2s_c}\rho^{2}d\rho\right)^\frac{1}{2}\leq cr^{\frac{3-2s_c}{2}}.
 $$
 Therefore in view of our choice of $r$ (see \eqref{LPD24}) we obtain \eqref{LPD22}, concluding the claim. 
 
 \ Next, define $W^1_n =\phi_n-U(-t_n^1)\psi^1$. It is easy to see that, for any $0\leq s\leq 1$,  
 \begin{itemize}
\item $U(t_n^1)W^1_n \rightharpoonup 0$ in $H^1$ (since $U(t_n^1)\phi_n \rightharpoonup \psi^1$),
\item  $\langle \phi_n,U(-t^1_n)\psi^1 \rangle_{\dot{H}^s}=\langle U(t^1_n)\phi_n,\psi^1 \rangle_{\dot{H}^s}\rightarrow \|\psi^1\|^2_{\dot{H}^s}$,
\item $\|W_n^1\|^2_{\dot{H}^s}=\|\phi_n\|^2_{\dot{H}^s}-\|\psi^1\|^2_{\dot{H}^s}+o_n(1)$.
\end{itemize}
%The group property immediately gives \eqref{PDNHs} in the case $M=1$.
The last item, with $s=0$ and $s=1$, implies $\|W_n^1\|_{H^1}\leq C_1$.

\ Let $A_2=\limsup \limits_{n\rightarrow +\infty}\|U(t)W_n^1\|_{L_t^\infty L^{\frac{6}{3-2s}}_x}$. If $A_2=0$ the result follows taking $\psi^j=0$ for all $j=2,\dots,M$.. Let $A_2>0$, repeating the above argument with $\phi_n$ replaced by $W_n^1$ we obtain a sequence $t_n^2$ and a function $\psi^2$ such that $U(t_n^2)W_n^1\rightharpoonup \psi^2$ in $H^1$ and $\beta C_1^ { \frac{3-2s_c}{2s_c(1-s_c)} }\|\psi^2\|_{\dot{H}^{s_c}}\geq A_2^{\frac{3-2s_c^2}{2s_c(1-s_c)}}.
$ 

\ We now prove that $|t_n^2-t_n^1|\rightarrow +\infty$. In fact, if we suppose (up to a subsequence) $t_n^2-t_n^1\rightarrow t^*$ finite, then 
$$
 U(t_n^2-t_n^1)\left(U(t_n^1)\phi_n-\psi^1 \right)=U(t_n^2)\left(\phi_n-U(-t_n^1)\psi^1 \right)=U(t_n^2)W_n^1\rightharpoonup \psi^2.
$$
On the other hand, since $U(t_n^1)\phi_n\rightharpoonup \psi^1$, the left side of the above expression converges weakly to $0$, and thus $\psi^2=0$, a contradiction. Define $W_n^2=W_n^1-U(-t_n^2)\psi^2$. For any $0\leq s\leq 1$, since $|t_n^1-t_n^2|\rightarrow +\infty$, we deduce
\begin{eqnarray*}
\langle \phi_n,U(-t_n^2)\psi^2 \rangle_{\dot{H}^{s}}&=&\langle U(t_n^2)\phi_n,\psi^2 \rangle_{\dot{H}^{s}}\\
&=&\langle U(t_n^2)\left(W_n^1+U(-t_n^1)\psi^1\right),\psi^2 \rangle_{\dot{H}^{s}}\\%+o_n(1)\rightarrow \|\psi^2\|_{\dot{H}^{s}}\\
&=&\langle U(t_n^2)W_n^1,\psi^2 \rangle_{\dot{H}^{s}}+\langle U(t_n^2-t_n^1)\psi^1,\psi^2 \rangle_{\dot{H}^{s}} \\
&\rightarrow& \|\psi^2\|^2_{\dot{H}^{s}}.
\end{eqnarray*}
In addition, the definition of $W_n^2$ implies that 
$$
\|W_n^2\|^2_{\dot{H}^s}=\|W_n^1\|^2_{\dot{H}^{s_c}}-\|\psi^2\|^2_{\dot{H}^s}+o_n(1)
$$
and $\|W_n^2\|_{H^1}\leq C_1$.

\ By induction we can construct $\psi^M$, $t_n^M$ and $W_n^M$ such that $U(t_n^M)W_n^{M-1}\rightharpoonup \psi^M$ in $H^1$ and 
\begin{equation}\label{LPD5}
\beta C_1^ { \frac{3-2s_c}{2s_c(1-s_c)} }\|\psi^M\|_{\dot{H}^{s_c}}\geq A_M^{\frac{3-2s_c^2}{2s_c(1-s_c)}},
\end{equation} 
where $A_M=\lim \limits_{n\rightarrow +\infty}\|U(t)W_n^{M-1}\|_{L_t^\infty L^{\frac{6}{3-2s_c}}_x}$. 

\ Next, we show \eqref{PD}. Suppose $1\leq j<M$, we prove that $|t^M_n-t_n^j|\rightarrow +\infty$ by induction assuming $|t^M_n-t_n^k|\rightarrow +\infty$ for $k=j+1, \dots, M-1$. Indeed, let $t^M_n-t_n^j\rightarrow t_0$ finite (up to a subsequence) then it is easy to see
$$
U(t_n^M-t_n^j)\left(U(t_n^j)W_n^{j-1}-\psi^j\right)-U(t_n^M-t_n^{j+1})\psi^{j+1}-...-U(t_n^M-t_n^{M-1})\psi^{M-1}
$$
$$
=U(t_n^M)W_n^{M-1}\rightharpoonup \psi^M.
$$
 Since the left side converges weakly to $0$, we have $\psi^M=0$, a contradiction. 
 
 We now consider 
$$
W_n^M=\phi_n-U(-t_n^1)\psi^1-U(-t_n^2)\psi^2-...-U(-t_n^M)\psi^M.
$$
Similarly as before, by \eqref{PD} we get for any $0\leq s\leq 1$
$$
\langle \phi_n,U(-t_n^M)\psi^M \rangle_{\dot{H}^s}=\langle U(t_n^M)W_n^{M-1},\psi^M \rangle_{\dot{H}^s}+o_n(1),
$$
and so $\langle \phi_n,U(-t_n^M)\psi^M \rangle_{\dot{H}^s}\rightarrow \|\psi^M\|^2_{\dot{H}^s}$. Thus expanding $\|W_n^M\|^2_{\dot{H}^s}$ we deduce that \eqref{PDNHs} also holds. 

 \ Finally, the inequality \eqref{LPD5} together with the relation \eqref{PDNHs} yield 
$$
\sum_{M\geq 1} \left(\frac{A_M^{\frac{3-2s_c^2}{s_c(1-s_c)}}}{\beta^2C_1^{ \frac{3-2s_c}{s_c(1-s_c)}  }}\right)\leq \lim_{n\rightarrow+\infty}\|\phi_n\|^2_{\dot{H}^{s_c}}<+\infty,
$$
 which implies that $A_M\rightarrow 0$ as $M\rightarrow +\infty$ i.e., \eqref{LPD2} holds\footnote{ Note that $3-2s_c^2>0$ since $s_c=\frac{1+b}{2}<1$.}. Therefore, from \eqref{LPD1} we get \eqref{AS}. This completes the proof.
\end{proof} 
\end{proposition}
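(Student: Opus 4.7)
The plan is to construct the profiles via an iterative weak-compactness extraction driven by the endpoint quantity
\[
A_M := \limsup_{n\to\infty}\|U(t)W_n^{M-1}\|_{L_t^\infty L_x^{6/(3-2s_c)}},
\]
with $W_n^0 := \phi_n$; if $A_M = 0$ we stop and set the remaining profiles to zero. The reason this endpoint suffices is interpolation: for any $\dot H^{s_c}$-admissible pair $(a,r)$ with $r<6/(3-2s_c)$, one can write $(a,r)$ as an interpolation between another admissible pair $(a_1,r_1)$ (with $r_1=2r$) and the endpoint $(\infty,6/(3-2s_c))$, so the Strichartz estimate \eqref{SE2} together with $\|W_n^{M-1}\|_{\dot H^{s_c}} \leq C_1$ gives $\|U(t)W_n^{M-1}\|_{L_t^a L_x^r} \lesssim C_1^{1-\eta}A_M^\eta$, and hence \eqref{AS} whenever $A_M\to 0$.

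To extract $\psi^1$ from a sequence with $A_1>0$, I would frequency-truncate $U(t)\phi_n$ by a smooth multiplier $\chi_r$ at scale $r \sim (C_1/A_1)^{1/(1-s_c)}$, chosen so that the high-frequency tail is at most $r^{-(1-s_c)}C_1 \leq A_1/2$ in $L_t^\infty L_x^{6/(3-2s_c)}$. Interpolating the truncated piece between $L_x^2$ and $L_x^\infty$ converts the norm lower bound into a pointwise lower bound, and the radial Strauss decay $|g(x)| \lesssim |x|^{-1}\|g\|_{L^2}^{1/2}\|\nabla g\|_{L^2}^{1/2}$ localizes the maximizer $x_n^1$ to a fixed ball $|x_n^1|\leq R$. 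Along subsequences where $U(t_n^1)\phi_n \rightharpoonup \psi^1$ in $H^1$ and $x_n^1 \to x^1$, testing against $\chi_r(x^1-\cdot)$ by Plancherel and Cauchy--Schwarz in $\dot H^{s_c}\times\dot H^{-s_c}$, with the scaling $\|\chi_r\|_{\dot H^{-s_c}} \lesssim r^{(3-2s_c)/2}$, delivers the quantitative lower bound \eqref{LPD22} on $\|\psi^1\|_{\dot H^{s_c}}$ in terms of $A_1$ and $C_1$.

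Setting $W_n^1 := \phi_n - U(-t_n^1)\psi^1$ and iterating produces $\psi^M, t_n^M, W_n^M$ with the analogous bound $\|\psi^M\|_{\dot H^{s_c}} \gtrsim A_M^{\alpha}C_1^{-\beta}$ for explicit exponents. The pairwise divergence \eqref{PD} is then a double contradiction: fixing $j<M$ and assuming by inner induction that $|t_n^M - t_n^\ell|\to\infty$ for $j<\ell<M$, if $t_n^M - t_n^j \to t^\ast$ were finite one could rewrite
\[
U(t_n^M)W_n^{M-1} = U(t_n^M-t_n^j)\bigl(U(t_n^j)W_n^{j-1}-\psi^j\bigr) - \sum_{j<\ell<M}U(t_n^M-t_n^\ell)\psi^\ell,
\]
where the first term tends weakly to zero (from $U(t_n^j)W_n^{j-1}\rightharpoonup\psi^j$) and each summand tends weakly to zero by unitarity plus the diverging time shifts; this contradicts $U(t_n^M)W_n^{M-1}\rightharpoonup\psi^M\neq 0$. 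The Pythagorean expansion \eqref{PDNHs} for $s\in[0,1]$ then follows by induction from the one-step identity $\|W_n^j\|_{\dot H^s}^2=\|W_n^{j-1}\|_{\dot H^s}^2-\|\psi^j\|_{\dot H^s}^2+o_n(1)$ (given by $U(t_n^j)W_n^{j-1}\rightharpoonup\psi^j$), together with the vanishing of the mixed inner products $\langle U(-t_n^j)\psi^j,U(-t_n^k)\psi^k\rangle_{\dot H^s}$, a Riemann--Lebesgue-type consequence of \eqref{PD} acting unitarily on $\dot H^s \cap H^1$.

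The main technical obstacle is step two, namely securing the clean quantitative bound \eqref{LPD22}: one must simultaneously balance the frequency cutoff scale $r$, the radial spatial localization radius $R$, and the dual Sobolev cost $\|\chi_r\|_{\dot H^{-s_c}}$ so that the resulting inequality depends only on $A_1$, $C_1$, and universal constants with the right exponents. Once this bound and its iterate are in place, taking $s=s_c$ in \eqref{PDNHs} forces $\sum_M\|\psi^M\|_{\dot H^{s_c}}^2\leq C_1^2$, hence summability of the series in powers of $A_M$, whence $A_M\to 0$ and \eqref{AS} follows via the interpolation of the first paragraph.
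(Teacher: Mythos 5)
Your proposal follows essentially the same route as the paper's proof: the same endpoint quantity $A_M$ controlling the remainder via interpolation against a doubled admissible pair, the same frequency truncation at scale $r\sim (C_1/A_1)^{1/(1-s_c)}$ combined with the radial Strauss decay to localize a near-maximizer, the same $\dot H^{s_c}\times\dot H^{-s_c}$ duality test against $\chi_r$ to obtain the quantitative lower bound \eqref{LPD22}, and the same inductive weak-convergence arguments for \eqref{PD}, \eqref{PDNHs}, and the summability forcing $A_M\to 0$. The only blemish is the direction of the inequality in your first paragraph: admissible exponents satisfy $r\geq 6/(3-2s_c)$, so the interpolation places $L^r$ between the endpoint $L^{6/(3-2s_c)}$ and $L^{2r}$, not the other way around; this is a slip of notation, not of substance.
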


\begin{remark}\label{RLPD} %RLPD = remark linear prof decomp 
It follows from the proof of Proposition \ref{LPD} that 
\begin{equation}\label{RLPD1}
\lim\limits_{M,n\rightarrow \infty} \|W_n^{M}\|_{L^{p}} =0,
\end{equation}
where $2<p<6$. Indeed, first we show
\begin{equation}\label{RLPD2}
\lim\limits_{M \rightarrow +\infty}\left(\lim\limits_{n \rightarrow +\infty}\|U(t)W_n^M\|_{L_t^\infty L^p_x}\right)=0.
\end{equation}
Note that, $\dot{H}^{s}\hookrightarrow L^{p}$ where $s=\frac{3}{2}-\frac{3}{p}$ (see inequality \eqref{SEI}). Since $2<p<6$ then $0<s<1$, thus repeating the argument used for showing \eqref{LPD2} with $\frac{6}{3-2s_c}$ replaced by $p$ and $s_c$ by $s$, we obtain \eqref{RLPD2}. On the other hand, \eqref{RLPD1} follows directly from \eqref{RLPD2} and the inequality
$$
\|W_n^{M}\|_{L_x^{p}}\leq \|U(t)W_n^{M}\|_{L^\infty_tL_x^{p}},
$$
since $W_n^{M}=U(0)W_n^{M}$.
\end{remark}

\begin{proposition}\label{EPE}{\bf (Energy Pythagoream Expansion)} Under the hypothesis of Proposition \ref{LPD} we obtain
\begin{equation}\label{PDE} %pdf prof decomp energy
E[\phi_n]=\sum_{j=1}^{M}E[U(-t_n^j)\psi^j]+E[W_n^M]+o_n(1).
\end{equation}

\begin{proof} By definition of $E[u]$ and \eqref{PDNHs} with $s=1$, we have
	$$
E[\phi_n]-\sum_{j=1}^{M}E[U(-t_n^j)\psi^j]-E[W_n^M]=-\frac{A_n}{\alpha+2}+o_n(1),
$$
where 
$$
A_n=\left\|  |x|^{-b}|\phi_n|^{4}  \right\|_{L^1}-\sum_{j=1}^{M}\left\|  |x|^{-b} |U(-t_n^j)\psi^j|^{4}  \right\|_{L_x^1}-\left\|  |x|^{-b}|W_n^M|^{4}  \right\|_{L^1}.
$$

\ For a fixed $M\in \mathbb{N}$, if $A_n \rightarrow 0$ as $n\rightarrow +\infty$ then \eqref{PDE} holds. To prove this fact, pick $M_1\geq M$ and rewrite the last expression as
\begin{eqnarray*}
A_n&=& \int \left(  |x|^{-b}|\phi_n|^{4}  -  \sum_{j=1}^{M}   |x|^{-b}  |U(-t_n^j)\psi^j|^{4} -  |x|^{-b} |W_n^M|^{4}   \right)dx     \\
&=&  I^1_n+I^2_n+I^3_n,     
\end{eqnarray*}   
where 
\begin{eqnarray*}
I^1_n&=&  \int   |x|^{-b} \left[ |\phi_n|^{4}-|\phi_n-W_n^{M_1}|^{4}\right]dx, \\
I^2_n&=& \int |x|^{-b} \left[ |W_n^{M_1}-W_n^M|^{4} -|W_n^M|^{4}\right]dx, 
\end{eqnarray*}
\begin{eqnarray*}
I^3_n&=& \int  |x|^{-b}\left[ |\phi_n-W_n^{M_1}|^{4}- \sum_{j=1}^{M} |U(-t_n^j)\psi^j|^{4}- |W_n^{M_1}-W_n^M|^{4}   \right] dx .
\end{eqnarray*}

\ We first estimate $I^1_n$. Combining \eqref{FI} and Lemma \ref{LEWO} (i)-(ii) we have %%%%5(recaling $r$ is defined in )
\begin{eqnarray*}
|I^1_n|&\lesssim& \int |x|^{-b}\left( |\phi_n|^{3}|W_n^{M_1}|+|\phi_n||W_n^{M_1}|^{3}  + |W_n^{M_1}|^{4}  \right)dx\\
&\lesssim&  \left(  \|\phi_n\|^{3}_{L^r}\|W_n^{M_1}\|_{L^r}+ \|\phi_n\|_{L^r}\|W_n^{M_1}\|^{3}_{L^{r}} +  \|W_n^{M_1}\|^{4}_{L^{r}} \right)+\\
& & \left(  \|\phi_n\|^{3}_{L^{4}}\|W_n^{M_1}\|_{L^{4}}+ \|\phi_n\|_{L^{4}}\|W_n^{M_1}\|^{3}_{L^{4}} + \|W_n^{M_1}\|^{4}_{{L^{4}}} \right)\\
&\lesssim&    \|\phi_n\|^{3}_{H^1}\|W_n^{M_1}\|_{L^r}+ \|\phi_n\|_{H^1}\|W_n^{M_1}\|^{3}_{L^r} +  \|W_n^{M_1}\|^{3}_{L^r} +\\
 & &  \|\phi_n\|^{3}_{H^1}\|W_n^{M_1}\|_{L^{4}}+ \|\phi_n\|_{H^1}\|W_n^{M_1}\|^{3}_{L^{4}} +  \|W_n^{M_1}\|^{4}_{L^{4}} ,
\end{eqnarray*}
where $\frac{12}{3-b}<r<6$. In view of inequality \eqref{RLPD1} and since $\{\phi_n\}$ is uniformly bounded in $H^1$, we conclude that\footnote{We can apply Remark \ref{RLPD} since $r$ and $4 \in (2,6)$.} 
$$
I^1_n\rightarrow +\infty\;\;\textnormal{as}\;\;n, M_1\rightarrow +\infty.
$$  

\ Also, by similar arguments (replacing $\phi_n$ by $W_n^{M}$) we have $$
I^2_n\rightarrow +\infty\;\;\textnormal{as}\;\;n, M_1\rightarrow +\infty,
$$
where we have used that $W_n^{M}$ is uniformly bounded by \eqref{PDNHs}.

\ Finaly we consider the term $I^3_n$. Since,
$$
\phi_n-W_n^{M_1}=\sum\limits_{j=1}^{M_1}U(-t_n^j)\psi^j\,\,\,\textnormal{and}\,\,\,W_n^{M}-W_n^{M_1}=\sum\limits_{j=M+1}^{M_1}U(-t_n^j)\psi^j,
$$
we can rewrite $I^3_n$ as
\begin{eqnarray*}
I^3_n=\int |x|^{-b} \left( \left| \sum\limits_{j=1}^{M_1} U(-t_n^j)\psi^j \right|^{4}- \sum\limits_{j=1}^{M_1}| U(-t_n^j)\psi^j |^{4}\right)dx
\end{eqnarray*}
$$
-\int |x|^{-b} \left( \left| \sum\limits_{j=M+1}^{M_1} U(-t_n^j)\psi^j \right|^{4}- \sum\limits_{j=M+1}^{M_1}| U(-t_n^j)\psi^j |^{4}\right)dx.
$$

To complete the prove we make use of the following claim.

 \textit{Claim.} For a fixed $M_1\in \mathbb{N}$ and for some $j_0\in \mathbb{N}$ ($j_0< M_1$), we get
$$
D_n= \left\||x|^{-b} \left|\sum_{j=j_0}^{M_1}  U(-t_n^j)\psi \right|^{4}  \right\|_{L^1_x}  -    \sum_{j=j_0}^{M_1} \left\|   |x|^{-b} |U(-t_n^j)\psi^j|^{4}   \right\|_{L^1_x}\rightarrow 0\;,\;\;\textnormal{as}\;\;\;n\rightarrow +\infty.
$$

\ Indeed, it is clear that the last limit implies that $I^3_n\rightarrow 0\;\textnormal{as}\;n\rightarrow +\infty$ completing the proof of relation \eqref{PDE}.

To prove the claim note that \eqref{FI} implies
$$
D_n\leq \sum_{j\neq k}^{M_1}\int |x|^{-b}|U(-t_n^j)\psi^j||U(-t_n^k)\psi^k|^{3}dx.
$$
Thus, from Lemma \ref{LEWO} (i) one has
\begin{equation*}\label{lematecnico1}
E^{j,k}_n\leq  c  \| U(-t_n^k)\psi^k\|^{3}_{L^{4}_x}\| U(-t_n^j)\psi^j\|_{L^{4}_x}+c\| U(-t_n^k)\psi^k\|^{3}_{L^r_x}\| U(-t_n^j)\psi^j\|_{L^r_x},
\end{equation*}
where $2<\frac{12}{3-b}< r<6$ and $E^{j,k}_n=\int |x|^{-b}|U(-t_n^j)\psi^j||U(-t_n^k)\psi^k|^{3}dx$. In view of \eqref{PD} we can consider that $t_n^k$, $t_n^j$ or both go to infinite as $n$ goes to infinite. If $t_n^j\rightarrow +\infty$ as $n\rightarrow +\infty$, so it follow from the last inequality and since $H^{1}\hookrightarrow L^{4}$ and $H^{1}\hookrightarrow L^{r}$ that %or $t_n^k\rightarrow +\infty$ as $n\rightarrow +\infty$ we have, respectively
\begin{eqnarray*}
E^{j,k}_n & \leq&  c \|\psi^k\|^{3}_{H^1}\| U(-t_n^j)\psi^j\|_{L^{4}_x} +c\| \psi^k\|^{3}_{H^1} \| U(-t_n^j)\psi^j\|_{L^r_x}\\
& \leq&  c \| U(-t_n^j)\psi^j\|_{L^{4}_x} +c\| U(-t_n^j)\psi^j\|_{L^r_x},
\end{eqnarray*}
where in the last inequality we have used that $(\psi^k)_{k\in\mathbb{N}}$ is a uniformly bounded sequence in $H^1$. Hence, if $n\rightarrow +\infty$ we have $t_n^j\rightarrow +\infty$ and using \eqref{LEWO3} with $t=t_n^j$ and $f=\psi^j$ we conclude that $E^{j,k}_n\rightarrow 0$ as $n\rightarrow +\infty$. Similarly for the case $t^k_n\rightarrow +\infty$ as $n\rightarrow +\infty$, we have 
\begin{eqnarray*}
E^{j,k}_n & \leq&  c \| U(-t_n^k)\psi^k\|^{3}_{L^{4}_x} \|\psi^j\|_{H^1}+c\| U(-t_n^k)\psi^k\|^{3}_{L^r_x}\| \psi^j\|_{H^1}\\
& \leq&  c  \| U(-t_n^k)\psi^k\|^{3}_{L^{4}_x}+c\| U(-t_n^k)\psi^k\|^{3}_{L^r_x},
\end{eqnarray*}
which implies that $E^{j,k}_n\rightarrow 0$ as $n\rightarrow +\infty$ by \eqref{LEWO3} with $t=t_n^k$ and $f=\psi^k$. Finally, since $D_n$ is a finite sum of terms in the form of $E^{j,k}$ we deduce $D_n\rightarrow 0$ as $n\rightarrow +\infty$. 
%By \eqref{PD} we can assume without loss of generality that $t_n^k\rightarrow +\infty$. Then, in view of Lemma \ref{LEWO} (iii) with $f=\psi^k$ and $g_n=U(-t_n^j)\psi^j$, we have
%\begin{eqnarray*}
 %  \lim\limits_{n\rightarrow +\infty} \int |x|^{-b}|U(-t_n^j)\psi^j||U(-t_n^k)\psi^k|^{\alpha+1}dx =0.
%\end{eqnarray*}
%Therefore, $D_n\rightarrow 0$ as $n\rightarrow +\infty$ . 
\end{proof}
\end{proposition}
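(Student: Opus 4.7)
The plan is to reduce \eqref{PDE} to an identity for the nonlinear part of $E$, since the kinetic piece $\frac{1}{2}\|\nabla \phi_n\|_{L^2}^2$ already splits by \eqref{PDNHs} with $s=1$. Writing out $E[\phi_n]-\sum_{j=1}^M E[U(-t_n^j)\psi^j]-E[W_n^M]$ and using this orthogonality, it suffices to show that for each fixed $M$,
\begin{equation*}
A_n := \bigl\||x|^{-b}|\phi_n|^4\bigr\|_{L^1} - \sum_{j=1}^M \bigl\||x|^{-b}|U(-t_n^j)\psi^j|^4\bigr\|_{L^1} - \bigl\||x|^{-b}|W_n^M|^4\bigr\|_{L^1} \longrightarrow 0
\end{equation*}
as $n\to\infty$.

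The strategy I would follow is to introduce an auxiliary parameter $M_1\geq M$ and telescope $A_n$ into three contributions: (i) the difference between $|\phi_n|^4$ and $|\phi_n-W_n^{M_1}|^4=\bigl|\sum_{j=1}^{M_1}U(-t_n^j)\psi^j\bigr|^4$; (ii) the difference between $|W_n^M|^4$ and $|W_n^{M_1}-W_n^M|^4$; and (iii) the quartic-expansion error from treating $\bigl|\sum_{j=1}^{M_1}U(-t_n^j)\psi^j\bigr|^4$ as $\sum_{j=1}^{M_1}|U(-t_n^j)\psi^j|^4$. Pieces (i) and (ii) are controlled by applying \eqref{FI} pointwise, which produces cross terms of the form $|x|^{-b}(\cdot)^3|W_n^{M_1}|$. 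After splitting the spatial integral as in Lemma \ref{LEWO} (i) into $B$ and $B^C$ to absorb the singular weight, these reduce to products of $H^1$-bounded factors and $L^p$-norms of $W_n^{M_1}$ for $p\in\{4,r\}$ with $\frac{12}{3-b}<r<6$; by Remark \ref{RLPD} such norms vanish jointly as $n,M_1\to\infty$, so (i) and (ii) are negligible.

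For piece (iii), a second application of \eqref{FI} reduces the expansion error to a finite sum of cross integrals
\begin{equation*}
E^{j,k}_n = \int |x|^{-b}\, |U(-t_n^j)\psi^j|\, |U(-t_n^k)\psi^k|^3\, dx,\qquad j\neq k.
\end{equation*}
The pairwise divergence \eqref{PD} forces at least one of $t_n^j,t_n^k$ to tend to $\pm\infty$; bounding the remaining factor uniformly in $H^1\hookrightarrow L^4\cap L^r$ and using the weighted dispersive decay of $U(t)\psi$ in $L^p$ for $p\in(2,6)$ (the content of the proof of Lemma \ref{LEWO} (iii)) makes each $E^{j,k}_n\to 0$. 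Hence, for every fixed $M_1$, piece (iii) also vanishes as $n\to\infty$, and letting $n\to\infty$ first, then $M_1\to\infty$, yields $A_n\to 0$, proving \eqref{PDE}.

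The main obstacle is that the weight $|x|^{-b}$ is not translation invariant and is singular at the origin, so neither a naive $L^4$ orthogonality nor the pure dispersive $L^\infty$-decay of $U(t)$ suffices. The whole argument turns on the combination of (a) splitting at $|x|=1$ and exploiting $|x|^{-b}\in L^\gamma(B)$ for $\gamma$ subject to $3/\gamma>b$ (Remark \ref{RIxb}), and (b) running the cross-term estimates in the $L^4$ and $L^r$ scales (with $r>\tfrac{12}{3-b}$) where Sobolev embedding gives uniform $H^1$-control and where the Strichartz-Sobolev smallness of $W_n^M$ and the dispersive smallness of $U(-t_n^j)\psi^j$ actually hold; this is exactly what is packaged into Lemma \ref{LEWO}.
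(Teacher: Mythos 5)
Your proposal follows essentially the same route as the paper's proof: the same reduction to $A_n\to 0$ via the $\dot H^1$ Pythagorean expansion, the same telescoping with an auxiliary $M_1\geq M$ into the three pieces, the same use of \eqref{FI} together with Lemma \ref{LEWO} and Remark \ref{RLPD} for the first two pieces, and the same cross-term analysis $E^{j,k}_n\to 0$ via pairwise divergence and the dispersive decay of Lemma \ref{LEWO} (iii) for the third. No substantive differences.
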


\subsection{Critical solution}\label{CCS}

In this subsection, assuming that $\delta_c<E[u]^{s_c}M[u]^{1-s_c}$ (see \eqref{deltac}), we construct a global solution, denoted by $u_c$, of \eqref{INLS} with infinite Strichartz norm $\|\cdot\|_{S(\dot{H}^{s_c})}$ and satisfying 
$$
E[u_c]^{s_c}M[u_c]^{1-s_c}=\delta_c.
$$ 
Next, we show that the flow associated to this critical solution is precompact in $H^1(\mathbb{R}^3)$. 

\begin{proposition}\label{ECS}{\bf (Existence of a critical solution)} If $\delta_c<E[Q]^{s_c}M[Q]^{1-s_c},$
%\begin{equation*}
%\delta_c<E[Q]^{s_c}M[Q]^{1-s_c},
%\end{equation*}
then there exists a radial function $u_{c,0}\in H^1(\mathbb{R}^3)$ such that the corresponding solution $u_c$ of the IVP \eqref{INLS} is global in $H^1(\mathbb{R}^3)$. Moreover the following properties hold
\begin{itemize}
	\item [(i)] $M[u_c]=1$,
    \item [(ii)] $E[u_c]^{s_c}=\delta_c$,
    \item [(iii)] $\|  \nabla u_{c,0} \|_{L^2}^{s_c} \|u_{c,0}\|_{L^2}^{1-s_c}<\|\nabla Q \|_{L^2}^{s_c} \|Q\|_{L^2}^{1-s_c}$,
    \item [(iv)] $\|u_{c}\|_{S(\dot{H}^{s_c})}=+\infty$.
\end{itemize}
\begin{proof}
 \  Recall from Subsection \ref{SPMR} that there exists a sequence of solutions $u_n$ to \eqref{INLS} with $H^1$
 initial data $u_{n,0}$, with $\|u_n\|_{L^2} = 1$ for all $n\in \mathbb{N}$, such that 
 \begin{equation}\label{PCS1}   %   prop critical condition 1
 \|\nabla u_{n,0}\|^{s_c}_{L^2} <  \|\nabla Q\|^{s_c}_{L^2}\|Q\|^{1-s_c}_{L^2}
 \end{equation}
 and
 \begin{equation*}\label{PCS2}
 E[u_n] \searrow \delta_c^{\frac{1}{s_c}}\;\; \textnormal{as}\;\; n \rightarrow +\infty.
 \end{equation*} 	
Moreover 
\begin{equation}\label{un}
\|u_n\|_{S(\dot{H}^{s_c})}=+\infty
\end{equation}
for every $n\in \mathbb{N}$. Note that, in view of the assumption $\delta_c<E[Q]^{s_c}M[Q]^{1-s_c}$, there exists $a \in (0,1)$ such that
\begin{equation}\label{PCS21}
 E[u_n]\leq  a E[Q]M[Q]^{\sigma},
\end{equation}
where $\sigma=\frac{1-s_c}{s_c}$. Furthermore, \eqref{PCS1} implies by Lemma \ref{LGS} (ii) that
\begin{equation*}
 \|\nabla u_{n,0}\|^{2}_{L^2} \leq w^{\frac{1}{s_c}} \|\nabla Q\|^{2}_{L^2}\|Q\|^{2\sigma}_{L^2},
\end{equation*}
where $w=\frac{E[u_n]^{s_c}M[u_n]^{1-s_c}}{E[Q]^{s_c}M[Q]^{1-s_c}}$, thus we deduce from \eqref{PCS21} and $\|u_n\|_{L^2} = 1$ that $w^{\frac{1}{s_c}}\leq a$ which implies
\begin{equation}\label{PCS22}
\|\nabla u_{n,0}\|^{2}_{L^2} \leq a \|\nabla Q\|^{2}_{L^2}\|Q\|^{2\sigma}_{L^2}.
\end{equation}

\ On the other hand, the linear profile decomposition (Proposition \ref{LPD}) applied to $u_{n,0}$, which is a uniformly bounded sequence in $H^1(\mathbb{R}^3)$ by \eqref{PCS22}, yields
 \begin{equation}\label{PCS3}
 u_{n,0}(x)=\sum_{j=1}^{M}U(-t_n^j)\psi^j(x)+W_n^M(x),
 \end{equation} 
where $M$ will be taken large later. 
It follows from the Pythagorean expansion \eqref{PDNHs}, with $s=0$, that for all $M\in \mathbb{N}$
\begin{equation}\label{PCS4}
\sum_{j=1}^{M}\|\psi^j\|^2_{L^2}+\lim_{n\rightarrow +\infty}\|W_n^M\|^2_{L^2}\leq \lim_{n\rightarrow +\infty}\|u_{n,0}\|^2_{L^2}= 1,
\end{equation}
this implies that 
\begin{equation}\label{PCS41}
\sum_{j=1}^{M}\|\psi^j\|^2_{L^2}\leq 1.
\end{equation}
In addition, another application of \eqref{PDNHs}, with $s=1$, and \eqref{PCS22} lead to
\begin{equation}\label{Sumpsij}
\sum_{j=1}^{M}\|\nabla \psi^j\|^2_{L^2}+\lim_{n\rightarrow +\infty}\|\nabla W_n^M\|^2_{L^2}\leq \lim_{n\rightarrow +\infty}\|\nabla u_{n,0}\|^2_{L^2}\leq a\|\nabla Q\|^2_{L^2}\|Q\|^{2\sigma}_{L^2},
\end{equation}
and so
\begin{equation}\label{PCS5}
\|\nabla \psi^j\|^{s_c}_{L^2}\leq a^{\frac{s_c}{2}}\|\nabla Q\|^{s_c}_{L^2}\| Q\|^{1-s_c}_{L^2},\;\;j=1,\dots,M.
\end{equation}

 \ Let $\{t^j_n\}_{n\in \mathbb{N}}$ be the sequence given by Proposition \ref{LPD}. From \eqref{PCS41}, \eqref{PCS5} and the fact that $U(t)$ is an isometry in $L^2(\mathbb{R}^3)$ and $\dot{H}^1(\mathbb{R}^3)$ we deduce
 \begin{equation*}\label{U(-t_n^j)}
 \|U(-t_n^j)\psi^j\|^{1-s_c}_{L^2_x}\|\nabla U(-t_n^j)\psi^j\|^{s_c}_{L^2_x}\leq a^{\frac{s_c}{2}} \|\nabla Q\|^{s_c}_{L^2}\| Q\|^{1-s_c}_{L^2}.
\end{equation*}
Now, Lemma \ref{LGS} (i) yields
\begin{equation}\label{PCS6}
E[U(-t_n^j)\psi^j]\geq c(b)\|\nabla \psi^j\|_{L^2}\geq 0
\end{equation}
 
\ A complete similar analysis also gives, for all $M\in \mathbb{N}$,
$$
\lim_{n\rightarrow +\infty}\|W_n^M\|^2_{L^2}\leq 1,
$$	
$$
\lim_{n\rightarrow +\infty}\|\nabla W_n^M\|^{s_c}_{L^2}\leq a^{\frac{s_c}{2}}\|\nabla Q\|^{s_c}_{L^2}\|Q\|^{1-s_c}_{L^2},
$$	
and for $n$ large enough (depending on $M$)
\begin{equation}\label{PCS7}
E[W_n^M]\geq 0.
\end{equation}

\ The energy Pythagorean expansion (Proposition \ref{EPE}) allows us to deduce
$$
\sum_{j=1}^{M}\lim_{n\rightarrow+\infty}E[U(-t_n^j)\psi^j]+\lim_{n\rightarrow+\infty}E[W_n^M]=\lim_{n\rightarrow+\infty}E[u_{n,0}]=\delta_c^{\frac{1}{s_c}},
$$
which implies, by \eqref{PCS6} and \eqref{PCS7}, that  
\begin{equation}\label{PCS8}
\lim_{n\rightarrow\infty}E[U(-t_n^j)\psi^j]\leq \delta_c^{\frac{1}{s_c}},\;\textnormal{for all}\;\;j=1,...,M.
\end{equation}

 \ Now, if more than one $\psi^j\neq 0$, we show a contradiction and thus the profile expansion given by \eqref{PCS3} is reduced to the case that only one profile is nonzero. In fact, if more than one $\psi^j\neq 0$, then by \eqref{PCS4} we must have $M[\psi^j]<1$ for each $j$. Passing to a subsequence, if necessary, we have two cases to consider: 
  
  \ {\bf Case $1$}. If for a given $j$, $t^j_n\rightarrow t^*$ finite (at most only one such $j$ exists by \eqref{PD}), then the continuity of the linear flow in $H^1(\mathbb{R}^3)$ yields   
   \begin{equation}\label{widetildepsi}
  U(-t_n^j)\psi^j\rightarrow U(-t^*)\psi^j\;\;\;\;\textnormal{strongly in}\;H^1.
   \end{equation}

Let us denote the solution of \eqref{INLS} with initial data $\psi$ by INLS$(t)\psi$. Set $\widetilde{\psi}^j=\textnormal{INLS}(t^*)(U(-t^*)\psi^j)$ so that $\mbox{INLS}(-t^*)\widetilde{\psi}^j=U(-t^*)\psi^j$. Since the set
\begin{equation*}
\mathcal{K}:=\left\{u_0\in H^1(\mathbb{R}^3):\;  \textrm{relations} \; \eqref{EMC} \; \textrm{and} \; \eqref{GFC} \; \textrm{hold}\;\right\}
\end{equation*}
is closed in $H^1(\mathbb{R}^3)$ then $\widetilde{\psi}^j\in \mathcal{K}$ and therefore INLS$(t)\widetilde{\psi}^j$ is a global solution by Theorem \ref{TG}. Moreover from \eqref{U(-t_n^j)}, \eqref{PCS8} and the fact that $M[\psi^j]<1$ we have
$$
\|\widetilde{\psi}^j\|^{1-s_c}_{L^2_x}\|\nabla \widetilde{\psi}^j\|^{s_c}_{L^2_x}\leq \|\nabla Q\|^{s_c}_{L^2}\| Q\|^{1-s_c}_{L^2}
$$
and 
$$
E[\widetilde{\psi}^j]^{s_c}M[\widetilde{\psi}^j]^{1-s_c}< \delta_c.
$$
So, the definition of $\delta_c$ (see \eqref{deltac}) implies
  \begin{equation}\label{CSCP} %crit sol conlu do profi
  \|\textnormal{INLS}(t)\widetilde{\psi}^j\|_{S(\dot{H}^{s_c})}<+\infty.
  \end{equation}
  Finally, from \eqref{widetildepsi} it is easy to see
\begin{equation}\label{CSWO3}
   \lim_{n\rightarrow+\infty}\|\textnormal{INLS}(-t_n^j)\widetilde{\psi}^j-U(-t_n^j)\psi^j\|_{H^1_x}=0.
  \end{equation}

 \ {\bf Case $2$.} If $|t^j_n|\rightarrow+\infty$ then by Lemma \ref{LEWO} (iii), $\left\||x|^{-b}|U(-t_n^j)\psi^j|^{4}\right\|_{L^1_x}\rightarrow 0$.
% $$
% \left\||x|^{-b}|U(-t_n^j)\psi^j|^{\alpha+2}\right\|_{L^1_x}\rightarrow 0,
% $$
 Thus, by the definition of Energy \eqref{energy} and the fact that $U(t)$ is an isometry in $\dot{H}^1(\mathbb{R}^3)$, we deduce
 \begin{equation}\label{CS0}
 \left( \frac{1}{2}\|\nabla \psi^j\|^2_{L^2}\right)^{s_c}= \lim_{n \rightarrow \infty}  E [ U(-t_n^j) \psi^j]^{s_c} \leq  \delta_c < E[Q]^{s_c}M[Q]^{1-s_c},
 \end{equation}
  where we have used \eqref{PCS8}. Therefore, by the existence of wave operator, Proposition \ref{PEWO} with $\lambda=(\frac{2 s_c}{3+b})^{\frac{s_c}{2}}<1$ (see also Remark \ref{backward}), there exists $\widetilde{\psi}^j\in H^1(\mathbb{R}^3)$ such that
  \begin{equation}\label{CSWO1}
  M[\widetilde{\psi}^j]=M[\psi^j]\;\;\;\textrm{ and }\;\;\;\;E[\widetilde{\psi}^j]=\frac{1}{2}\|\nabla \psi^j\|^2_{L^2},
  \end{equation}
  \begin{equation}\label{CSWO2}
  \|\nabla \textnormal{INLS}(t)\widetilde{\psi}^j\|^{s_c}_{L^2_x}\|\widetilde{\psi}^j\|^{1-s_c}_{L^2}<\|\nabla Q\|^{s_c}_{L^2}\| Q\|^{1-s_c}_{L^2} 
  \end{equation}
  and \eqref{CSWO3} also holds in this case.
  
   Since $M[{\psi}^j]<1$ and using \eqref{CS0}-\eqref{CSWO1}, we get $E[\widetilde{\psi}^j]^{s_c}M[\widetilde{\psi}^j]^{1-s_c}<\delta_c$. Hence, the definition of $\delta_c$  together with \eqref{CSWO2} also lead to \eqref{CSCP}.
 
  To sum up, in either case, we obtain a new profile $\widetilde{\psi}^j$ for the given $\psi^j$ such that \eqref{CSWO3} \eqref{CSCP} hold. \\
	
Next, we define $u_n(t)=\textnormal{INLS}(t)u_{n,0}$; $v^j(t)=\textnormal{INLS}(t)\widetilde{\psi}^j$; $\widetilde{u}_n(t)=\sum_{j=1}^{M}v^j(t-t_n^j)$ and 
 %$$
 % u_n(t)=\textnormal{INLS}(t)u_{n,0},
 % $$	
 % $$
 % v^j(t)=\textnormal{INLS}(t)\widetilde{\psi}^j,
 % $$
 % $$
 % \widetilde{u}_n(t)=\sum_{j=1}^{M}v^j(t-t_n^j),
 % $$
 %and
\begin{equation}\label{CSR}%crit sol remaind
\widetilde{W}_n^M=\sum_{j=1}^{M}\left[ U(-t_n^j)\psi^j-\textnormal{INLS}(-t_n^j)\widetilde{\psi}^j \right]+W_n^M.
  \end{equation}
 Then $\widetilde{u}_n(t)$ solves the following equation
   \begin{equation}\label{widetildeun}
  i\partial_t\widetilde{u}_n+\Delta \widetilde{u}_n+|x|^{-b}|\widetilde{u}_n|^{2}\widetilde{u}_n=e_n^M,
\end{equation}
where
\begin{equation}\label{CSR1}
 e_n^M=|x|^{-b}\left( |\widetilde{u}_n|^{2}\widetilde{u}_n-\sum_{j=1}^{M}|v^j(t-t_n^j)|^{2}v^j(t-t_n^j) \right).
  \end{equation}
Also note that by definition of $\widetilde{W}_n^M$ in \eqref{CSR} and \eqref{PCS3}we can write
  \begin{equation*}\label{aproximation1}
    u_{n,0}=\sum_{j=1}^{M}\textnormal{INLS}(-t_n^j)\widetilde{\psi}^j+\widetilde{W}_n^M,
  \end{equation*}
so it is easy to see $u_{n,0}-\widetilde{u}_n(0)=\widetilde{W}_n^M$, then combining  \eqref{CSR} and the Strichartz inequality \eqref{SE2}, we estimate
  \begin{equation*}
  \|U(t)\widetilde{W}_n^M\|_{S(\dot{H}^{s_c})}\leq c\sum_{j=1}^{M}\|\textnormal{INLS}(-t_n^j)\widetilde{\psi}^j-U(-t_n^j)\psi^j\|_{H^1}+\|U(t)W_n^M\|_{S(\dot{H}^{s_c})},
  \end{equation*}
  which implies 
 \begin{equation}\label{CSR2}
 \lim_{M\rightarrow +\infty} \left[\lim_{n\rightarrow +\infty}  \|U(t)(u_{n,0}-\widetilde{u}_{n,0})\|_{S(\dot{H}^{s_c})}\right]=0, 
   \end{equation}
   where we used \eqref{AS} and \eqref{CSWO3}. 
  
  \ The idea now is to approximate $u_n$ by $\widetilde{u}_n$. Therefore, from the long time perturbation theory (Proposition \ref{LTP}) and \eqref{CSCP} we conclude $\|u_n\|_{S(\dot{H}^{s_c})}<+\infty$, 
  %$$
  %\|u_n\|_{S(\dot{H}^{s_c})}<+\infty,
  %$$    
for $n$ large enough, which is a contradiction with \eqref{un}. Indeed, we assume the following two claims for a moment to conclude the proof.\\%  first prove the following two claims to reach the contradiction.
{\bf Claim $1$.} For each $M$ and $\varepsilon>0$, there exists $n_0=n_0(M,\varepsilon)$ such that
 \begin{equation}\label{claim2}
n>n_0\;\; \Rightarrow\;\;   \|e_n^M\|_{S'(\dot{H}^{-s_c})}+\|e_n^M\|_{S'(L^2)}+\|\nabla e_n^M\|_{S'(L^2)}\leq\varepsilon.
\end{equation}
\ {\bf Claim $2$.} There exist $L>0$ and $S>0$ independent of $M$ such that for any $M$, there exists $n_1=n_1(M)$ such that
 \begin{equation}\label{claim1}
 n>n_1\;\; \Rightarrow\;\;   \|\widetilde{u}_n\|_{S(\dot{H}^{s_c})}\leq L\;\;\textnormal{and}\;\;\|\widetilde{u}_n\|_{L^\infty_tH^1_x}\leq S.
 \end{equation}

  \ Note that by \eqref{CSR2}, there exists $M_1=M_1(\varepsilon)$ such that for each $M>M_1$ there exists $n_2=n_2(M)$ such that
  $$ 
n>n_2\;\; \Rightarrow\;\;  \|U(t)(u_{n,0}-\widetilde{u}_{n,0})\|_{S(\dot{H}^{s_c})}\leq \varepsilon,
  $$
  with $\varepsilon<\varepsilon_1$ as in Proposition \ref{LTP}. Thus, if the two claims hold true, by Proposition \ref{LTP}, for $M$ large enough and $n>\max\{n_0,n_1,n_2\}$, we obtain  $\|u_n\|_{S(\dot{H}^{s_c})}<+\infty$, reaching the desired contradiction .
  
 \ Up to now, we have reduced the profile expansion to the case where $\psi^1\neq 0$ and $\psi^j= 0$ for all $j\geq 2$. We now begin to show the existence of a critical solution. From the same arguments as the ones in the previous case (the case when more than one $\psi^j \neq 0 $), we can find $\widetilde{\psi}^1$ such that 
\begin{equation*}
u_{n,0}=\textnormal{INLS}(-t_n^1)\widetilde{\psi}^1+\widetilde{W}_n^M,
\end{equation*}   
with
\begin{equation}\label{CSWO11}
M[\widetilde{\psi}^1]=M[\psi^1]\leq 1%\;\;\;\textrm{ and }\;\;\;E[\widetilde{\psi}^1]^{s_c}M[\widetilde{\psi}^1]^{1-s_c}\leq\delta_c,
\end{equation}
\begin{equation}\label{CSWO221}
 E[\widetilde{\psi}^1]^{s_c}=\left(\frac{1}{2} \|\nabla \psi^1 \|^2_{L^2}\right)^{s_c}\leq \delta_c
    \end{equation}
    \begin{equation}\label{CSWO22}
    \|\nabla \textnormal{INLS}(t)\widetilde{\psi}^1\|^{s_c}_{L^2_x}\|\widetilde{\psi}^1\|^{1-s_c}_{L^2}< \|\nabla Q\|^{s_c}_{L^2}\| Q\|^{1-s_c}_{L^2}
    \end{equation}
   and 
   \begin{equation}\label{CSCP1} %crit sol conlu do profi
   \lim_{n\rightarrow +\infty} \|U(t)(u_{n,0}-\widetilde{u}_{n,0})\|_{S(\dot{H}^{s_c})}=\lim_{n\rightarrow +\infty}\|U(t)\widetilde{W}_{n}^M\|_{S(\dot{H}^{s_c})}=0.
    \end{equation}
   
     \ Let $\widetilde{\psi}^1=u_{c,0}$ and $u_c$ be the global solution to \eqref{INLS} (in view of Theorem \ref{TG} and inequalities \eqref{CSWO11}-\eqref{CSWO22}) with initial data $\widetilde{\psi}^1$, that is, $u_c(t)=\textnormal{INLS}(t)\widetilde{\psi}^1$. We claim that
     \begin{equation}\label{claimfinal}
     \|u_c\|_{S(\dot{H}^{s_c})}=+\infty.
     \end{equation}    
	
	\ Indeed, suppose, by contradiction, that $\|u_c\|_{S(\dot{H}^{s_c})}<+\infty$. Let,
	$$
    \widetilde{u}_n(t)=\textnormal{INLS}(t-t_n^j)\widetilde{\psi}^1,
	$$ 
	then 
	$$
	\|\widetilde{u}_n(t)\|_{S(\dot{H}^{s_c})}=\|\textnormal{INLS}(t-t_n^j)\widetilde{\psi}^1\|_{S(\dot{H}^{s_c})}=\|\textnormal{INLS}(t)\widetilde{\psi}^1\|_{S(\dot{H}^{s_c})}=\|u_c\|_{S(\dot{H}^{s_c})}<+\infty.
	$$
Furthermore, it follows from \eqref{CSWO11}-\eqref{CSCP1} that   
$$
\sup_{t\in \mathbb{R}}\|\widetilde{u}_n\|_{H^1_x}=\sup_{t\in \mathbb{R}}\|u_c\|_{H^1_x}<+\infty.
$$
and 
$$
\|U(t)(u_{n,0}-\widetilde{u}_{n,0})\|_{S(\dot{H}^{s_c})}\leq \varepsilon,
$$
for $n$ large enough. Hence, by the long time perturbation theory (Proposition \ref{LTP}) with $e=0$, we obtain $\|u_n\|_{S(\dot{H}^{s_c})}<+\infty$, which is a contradiction with \eqref{un}. 
	
\ On the other hand, the relation \eqref{claimfinal} implies $E[u_c]^{s_c}M[u_c]^{1-s_c}=\delta_c$ (see \eqref{deltac}). Thus, we conclude from \eqref{CSWO11} and \eqref{CSWO221}
$$
M[u_c]=1\;\;\;\;\textnormal{and}\;\;\;\;E[u_c]^{s_c}=\delta_c.
$$ 
Also note that \eqref{CSWO22} implies (iii) in the statement of the Proposition \ref{ECS}.
   
\ To complete the proof it remains to establish Claims $1$ and $2$ (see \eqref{claim1} and \eqref{claim2}). 
%To show these claims we use the same admissible pairs and some estimates already used and proved in \cite[Section $4$, with $(N,\alpha)=(3,2)$]{CARLOS}, to show global well-posedness.             %Subsection \ref{Hstheory}.
%\begin{equation*}\label{PHsA1}  %pares Hs admissiveis1
%\widehat{q}=\frac{4(4-\theta)}{6+2b-\theta(1+b)},\;\;\;\widehat{r}\;=\;\frac{6(4-\theta)}{2(3-b)-\theta(2-b)},
%\end{equation*}
%and
%\begin{equation*}\label{PHsA2}
%\widetilde{a}\;=\;\frac{2(4-\theta)}{(7+2b-3\theta)-(2-b)(1-\theta)},\;\;\;  \widehat{a}=\frac{2(4-\theta)}{1-b}.
%\end{equation*}
%Recall that $(\widehat{q},\widehat{r})$ is $L^2$-admissible, $(\widehat{a},\widehat{r})$ is $\dot{H}^{s_c}$-admissible and $(\widetilde{a},\widehat{r})$ is $\dot{H}^{-s_c}$-admissible.

\ {\bf Proof of Claim $1$.} First, we show that for each $M$ and $\varepsilon>0$, there exists $n_0=n_0(M,\varepsilon)$ such that $\|e_n^M\|_{S'(\dot{H}^{-s_c})}< \frac{\varepsilon}{3}$. From \eqref{CSR1} and \eqref{EIerror} (with $\beta=2$), we deduce 
\begin{equation}\label{ec21} %error claim 2 eq 1
\|e_n^M\|_{S'(\dot{H}^{-s_c})}\leq C_{\alpha,M}\sum_{j=1}^{M}\sum_{1\leq j\neq k\leq M}  \left\||x|^{-b}|v^k|^2|v^j|\right\|_{L^{\widetilde{a}'}_tL^{\widehat{r}'}_x}.
\end{equation}
We claim that the norm in the right hand side of \eqref{ec21} goes to $0$ as $n\rightarrow +\infty$. Indeed, by the relation \eqref{LG1Hs5} one has
 \begin{align}\label{ec22}
  \left\||x|^{-b}|v^k|^2|v^j|\right\|_{L^{\widetilde{a}'}_tL^{\widehat{r}'}_x} \leq& c \|v^k\|^{\theta}_{L^\infty_tH^1_x} \left\|\|v^k(t-t_n^k)\|^{2-\theta}_{L_x^{\widehat{r}}} \|v^j(t-t_n^j)\|_{L^{\widehat{r}}_x}\right\|_{L^{\widetilde{a}'}_t}.   
 \end{align}
  Fix $1\leq j\neq k\leq M$. Note that, $\|v^k\|_{H^1_x}<+\infty$ (see \eqref{CSWO1} - \eqref{CSWO2}) and by \eqref{CSCP} $v^j$, $v^k\in S(\dot{H^{s_c}})$ and , so we can approximate $v^j$ by functions of $C_0^\infty(\mathbb{R}^{3+1})$. Hence, defining
 $$
 g_n(t)= \|v^k(t)\|^{2-\theta}_{L_x^{\widehat{r}}} \|v^j(t-(t_n^j-t_n^k))\|_{L^{\widehat{r}}_x},
 $$
 we deduce
 \begin{itemize}
\item [(i)] $g_n\in L^{\widetilde{a}'}_t$. Indeed, applying the H\"older inequality since $\frac{1}{\widetilde{a}'}=\frac{\alpha-\theta}{\widehat{a}}+\frac{1}{\widehat{a}}$ we get 
$$
\|g_n\|_{L^{\widetilde{a}'}_t}\leq \|v^k\|^{2-\theta}_{L^{\widehat{a}}_t L_x^{\widehat{r}}} \|v^j\|_{L^{\widehat{a}}_tL^{\widehat{r}}_x}\leq \|v^k\|^{2-\theta}_{S(\dot{H}^{s_c})} \|v^j\|_{S(\dot{H}^{s_c})}<+\infty.
$$
Furthermore, \eqref{PD} implies that $g_n(t)\rightarrow 0$ as $n\rightarrow +\infty$.
\item [(ii)] $|g_n(t)|\leq  KI_{supp(v^j)}\|v^k(t)\|^{2-\theta}_{L_x^{\widehat{r}}}\equiv g(t)$ for all $n$, where $K>0$ and $I_{supp(v^j)}$ is the characteristic function of $supp(v^j)$. Similarly as (i), we obtain 
$$
\|g\|_{L^{\widetilde{a}'}_t}\leq \|v^k\|^{2-\theta}_{L^{\widehat{a}}_t L_x^{\widehat{r}}}\|I_{supp(v^j)}\|_{L^{\widehat{a}}_t L_x^{\widehat{r}}}<+\infty.
$$
That is, $g\in L^{\widetilde{a}'}_t$.
\end{itemize}
 Then, the Dominated Convergence Theorem yields $\|g_n\|_{L^{\widetilde{a}'}_t}\rightarrow 0$ as $n\rightarrow +\infty$, and so combining this result with \eqref{ec22} we conclude the proof of the first estimate. 
 
   \ Next, we prove $\|e_n^M\|_{S'(L^2)}<\frac{\varepsilon}{3}$. Using again the elementary inequality \eqref{EIerror} we estimate
 \begin{equation*}
   \|e_n^M\|_{S'(L^2)}\leq C_{\alpha,M}\sum_{j=1}^{M}\sum_{1\leq j\neq k\leq M}  \left\||x|^{-b}|v^k|^2|v^j|\right\|_{L^{\widehat{q}'}_tL^{\widehat{r}'}_x}.
  \end{equation*}   
On the other hand, we have (see proof of Lemma \ref{LG1} (ii)) 
 \begin{eqnarray*}
	\left\|  |x|^{-b}|v^k|^2|v^j \right \|_{L_t^{\widehat{q}'}L^{\widehat{r}'}_x}&\leq&  c \|v^k\|^{\theta}_{L^\infty_tH^1_x}\left \| \|v^k(t-t_n^k)\|^{2-\theta}_{ L_x^{\widehat{r}}} \|v^j(t-t_n^j)\|_{L^{\widehat{r}}_x}\right\|_{L_t^{\widehat{q}'}}\\
  &\leq & c\|v^k\|^{\theta}_{L^\infty_tH^1_x} \|v^k\|^{2-\theta}_{L^{\widehat{a}}_t L_x^{\widehat{r}}} \|v^j\|_{L_t^{\widehat{q}}L^{\widehat{r}}_x}\\
 &\leq & c\|v^k\|^{\theta}_{L^\infty_tH^1_x}  \|v^k\|^{2-\theta}_{S(\dot{H}^{s_c})}  \|v^j\|_{S(L^2)}.
\end{eqnarray*}
Since $v^j\in S(\dot{H}^{s_c})$ then by \eqref{SCATTER1} the norms $\|v^j\|_{S(L^2)}$ and $\|\nabla v^j\|_{S(L^2)}$ are bounded quantities. This implies that the right hand side of the last inequality is finite. Therefore, using the same argument as in the previous case we get
  $$
   \left \| \|v^k(t-t_n^k) \|^{2-\theta}_{ L_x^{\widehat{r}}} \|v^j(t-t_n^j) \|_{L^{\widehat{r}}_x}\right\|_{L_t^{\widehat{q}'}}\rightarrow 0, 
  $$
  as $n\rightarrow +\infty$, which lead to % completes the proof. % we obtain the desired result ($\|e_n^M\|_{S'(L^2)}<\frac{\varepsilon}{3}$).
$
\left\|  |x|^{-b}|v^k|^2|v^j \right \|_{L_t^{\widehat{q}'}L^{\widehat{r}'}_x}\rightarrow 0.
$
  
\ Finally, we prove $\|\nabla e_n^M\|_{S'(L^2)}<\frac{\varepsilon}{3}$. Note that
\begin{eqnarray}\label{ec23}
\nabla e_n^M&=&\nabla (|x|^{-b})\left( f(\widetilde{u}_n)-\sum_{j=1}^M f(v^j)  \right)+|x|^{-b}\nabla \left(  f(\widetilde{u}_n)-\sum_{j=1}^M f(v^j)\right)\nonumber \\
&\equiv& R^1_n+R^2_n,
\end{eqnarray}
where $f(v)=|v|^2 v$. First, we consider $R^1_n$. The estimate \eqref{EIerror} yields
$$
\| R^1_n\|_{S'(L^2)}\leq c\; C_{\alpha,M}\sum_{j=1}^{M}\sum_{1\leq j\neq k\leq M}  \left\||x|^{-b-1}|v^k|^2|v^j|\right\|_{L^{\widehat{q}'}_tL^{\widehat{r}'}_x}
$$
and by Remark \ref{RSglobal} we deduce that $\left\||x|^{-b-1}|v^k|^2|v^j|\right\|_{L^{\widehat{q}'}_tL^{\widehat{r}'}_x}$ is finite, then by the same argument as before we have 
$$
 \left\||x|^{-b-1}|v^k(t-t_n^k)|^2|v^j(t-t_n^j)|\right\|_{L^{\widehat{q}'}_tL^{\widehat{r}'}_x}\rightarrow 0\;\;\textnormal{as}\;\;n\rightarrow+\infty.
$$
Therefore, the last two relations yield $\|R^1_n\|_{S'(L^2)}\rightarrow 0$ as $n\rightarrow+\infty$.

\ On the other hand, observe that
\begin{eqnarray}\label{ec231}
\nabla (  f(\widetilde{u}_n)-\sum_{j=1}^M f(v^j))&=&f'(\widetilde{u}_n)\nabla \widetilde{u}_n-\sum_{j=1}^M f'(v^j)\nabla v^j\nonumber\\
&=&\sum_{j=1}^M (f'(\widetilde{u}_n)- f'(v^j))\nabla v^j.
\end{eqnarray}
Since $|f'(\widetilde{u}_n)- f'(v^j)|\leq C_{\alpha,M}\sum_{1\leq k\neq j\leq M}|v^k|(|v^j|+|v^k|)$, %(by Remark \ref{nonlinerity})
%$$
%|f'(\widetilde{u}_n)- f'(v^j)|\leq C_{\alpha,M}\sum_{1\leq k\neq j\leq M}|v^k|(|v^j|^{\alpha-1}+|v^k|^{\alpha-1})\;\;\;\textnormal{if}\;\;\;\alpha>1
%$$
we deduce using the last two relations together with \eqref{ec23} and \eqref{ec231} 
$$
\|R_n^2\|_{S'(L^2)}\lesssim \sum_{j=1}^M\sum_{1\leq k\neq j\leq M} \left\||x|^{-b}  |v^k|(|v^j|+|v^k|)|\nabla v^j|\right\|_{S'(L^2)}.
$$
Therefore, from Lemma \ref{LG1} (ii) (see also Remark \ref{RGP}) we have that the right hand side of the last two inequalities are finite quantities and, by an analogous argument as before, we conclude that 
$$
\|R_n^2\|_{S'(L^2)}\rightarrow 0\;\;\;\textnormal{as}\;\;\;n\rightarrow+\infty.
$$
This completes the proof of Claim $1$.
   
  \ {\bf Proof of Claim $2.$} First, we show that $\|\widetilde{u}_n\|_{L^\infty_tH^1_x}$ and $\|\widetilde{u}_n\|_{L^\gamma_tL^\gamma_x}$ are bounded quantities where $\gamma=\frac{10}{3}$. Indeed, we already know (see \eqref{PCS41} and \eqref{Sumpsij}) that there exists $C_0$ such that
$$
\sum_{j=1}^{\infty}\|\psi^j\|^2_{H^1_x}\leq C_0,
$$
then we can choose $M_0\in \mathbb{N}$ large enough such that
\begin{equation}\label{SP} %soma pequena
\sum_{j=M_0}^{\infty}\|\psi^j\|^2_{H^1_x}\leq \frac{\delta}{2},
\end{equation}
where $\delta>0$ is a sufficiently small.\\
Fix $M\geq M_0$. From \eqref{CSWO3}, there exists $n_1(M)\in \mathbb{N}$ where for all $n> n_1(M)$, we obtain
$$
  \sum_{j=M_0}^{M}\|\textnormal{INLS}(-t_n^j)\widetilde{\psi}^j\|^2_{H^1_x}\leq \delta,
$$    
where we have used \eqref{SP}. This is equivalent to 
\begin{equation}\label{claim11}
\sum_{j=M_0}^{M}\|v^j(-t_n^j)\|^2_{H^1_x}\leq \delta.
\end{equation}
Therefore, by the Small Data Theory (Proposition \ref{GWPH1})\footnote{Recall that the pair $(\infty,2)$ is $L^2$-admissible.}
$$
\sum_{j=M_0}^{M}\|v^j(t-t_n^j)\|^2_{L_t^{\infty}H^1_x}\leq c\delta\;\;\textnormal{for}\;n\geq n_1(M).
$$
Note that,  
$$
\left\|\sum_{j=M_0}^{M} v^j(t-t_n^j)\right\|^2_{H^1_x}=\sum_{j=M_0}^{M}\|v^j(t-t_n^j)\|^2_{H_x^1}+2\sum_{M_0\leq l \neq k\leq M}\langle v^l(t-t_n^l),v^k(t-t_n^k)\rangle_{H^1_x},
$$ 
so, for $l\neq k$ we deduce from $\eqref{PD}$ that (see \cite[Corollary $4.4$]{JIANCAZENAVE} for more details)
$$ \sup_{t\in\mathbb{R}} |\langle v^l(t-t_n^l),v^k(t-t_n^k)\rangle_{H^1_x}|\rightarrow 0\;\;\textnormal{as}\;\;n\rightarrow +\infty.
$$
Hence, since $\|v^j\|_{L^{\infty}_tH_x^1}$ is bounded (see \eqref{CSWO1} - \eqref{CSWO2}), by definition of $\widetilde{u}_n$ there exists $S>0$ (independent of $M$) such that
 \begin{equation}\label{claim12}
 \sup_{t\in\mathbb{R}}\|\widetilde{u}_n\|^2_{H^1_x}\leq S \;\,\textnormal{for}\;\;n>n_1(M).
\end{equation}
   
   \ We now show $\|\widetilde{u}_n\|_{L^\gamma_tL^\gamma_x}\leq L_1$. Using again \eqref{claim11} with $\delta$ small enough and the Small Data Theory (noting that $(\gamma,\gamma)$ is $L^2$-admissible and $\gamma >2$), we have 
\begin{equation}\label{claim111}
 \sum_{j=M_0}^{M}\|v^j(t-t_n^j)\|^{\gamma}_{L^\gamma_tL^\gamma_x}\leq c  \sum_{j=M_0}^{M}\|v^j(-t_n^j)\|^{\gamma}_{H^1_x}\leq c  \sum_{j=M_0}^{M}\|v^j(-t_n^j)\|^2_{H^1_x}\leq c\delta,
\end{equation}
for $n\geq n_1(M)$.

 On the other hand, in view of \eqref{FI} 
  $$
   \left\|\sum_{j=M_0}^{M} v^j(t-t_n^j)\right\|^\gamma_{L^\gamma_tL^\gamma_x}\leq \sum_{j=M_0}^{M}\|v^j\|^\gamma_{L^\gamma_tL^\gamma_x}+C_M\sum_{M_0\leq j \neq k\leq M}\int_{\mathbb{R}^{3+1}} |v^j||v^k||v^k|^{\gamma-2}
  $$ 
 for all $M>M_0$. Observe that, given $j$ such that $M_0\leq j\neq k\leq M$, the H\"older inequality yields 
 \begin{eqnarray}\label{claim112}
 \int_{\mathbb{R}^{3+1}} |v^j||v^k||v^k|^{\gamma-2}& \leq &\|v^k(t-t_n^k)\|_{L^\gamma_{t}L^\gamma_x}\left( \int_{\mathbb{R}^{3+1}} |v^j|^{\frac{\gamma}{2}}|v^k|^{\frac{\gamma}{2}} \right)^{\frac{2}{\gamma}} \nonumber \\
 &\leq& c \|v^j(-t_n^j)\|_{H^1_x}\left( \int_{\mathbb{R}^{3+1}} |v^j|^{\frac{\gamma}{2}}|v^k|^{\frac{\gamma}{2}} \right)^{\frac{2}{\gamma}}.
\end{eqnarray}
Since $v^j$ and $v^k\in L^\gamma_tL^\gamma_x$ we have that the right hand side of \eqref{claim112} is bounded and so by similar arguments as in the previous claim, we deduce from \eqref{PD} that the integral in the right hand side of the previous inequality goes to $0$ as $n\rightarrow +\infty$ (another proof of this fact can be found in \cite[Lemma $4.5$]{JIANCAZENAVE}). This implies that there exists $L_1$ (independent of $M$) such that
 \begin{equation}\label{claim113}
  \|\widetilde{u}_n\|_{L^\gamma_tL^\gamma_x} \leq \sum_{j=1}^{M_0}\|v^j\|_{L^\gamma_tL^\gamma_x}+\left\|\sum_{j=M_0}^{M} v^j\right\|_{L^\gamma_tL^\gamma_x}\leq L_1\;\;\;\textnormal{for}\;n\geq n_1(M),
\end{equation}
where we have used \eqref{claim111}.  

\ To complete the proof of the Claim $2$ we will show the following inequality % Next, we obtain by interpolation inequality 
\begin{equation}\label{ineq2}
\|\widetilde{u}_n\|_{L^{\widehat{a}}_tL^{\widehat{r}}_x}\leq \|\widetilde{u}_n\|^{1-\frac{\gamma}{\widehat{a}}}_{L^\infty_tH^1_x}\|\widetilde{u}_n\|^{\frac{\gamma}{\widehat{a}}}_{L^\gamma_tL^\gamma_x},
\end{equation}
where $\widehat{a}$ and $\widehat{r}$ are defined in \eqref{PA1}-\eqref{PA2}.

\ Assuming the last inequality for a moment let us conclude the proof of the Claim $2$. Indeed combining \eqref{claim12} and \eqref{claim113} we deduce from \eqref{ineq2} that
$$
\|\widetilde{u}_n\|_{L^{\widehat{a}}_tL^{\widehat{r}}_x}\leq S^{1-\frac{\gamma}{\widehat{a}}}L_1^{\frac{\gamma}{\widehat{a}}}=L_2,\;\;\;\textnormal{for}\;n\geq n_1(M).
$$
Then, since $\widetilde{u}_n$ satisfies the perturbed equation \eqref{widetildeun} we can apply the Strichartz estimates to the integral formulation and conclude (using also Claim $1$)
\begin{eqnarray*}
\|\widetilde{u}_n\|_{S(\dot{H}^{s_c})}&\leq &c\|\widetilde{u}_{n,0}\|_{H^1_x}+c\left\|  |x|^{-b}  |\widetilde{u}_n|^2 \widetilde{u}_n \right\|_{L^{\widetilde{a}'}_tL_x^{\bar{\widehat{r}}'}}+\|e^M_n\|_{S'(\dot{H}^{-s_c})}\\
&\leq & cS+cL_2+\varepsilon = L,
\end{eqnarray*}
for $n\geq n_1(M)$, which completes the proof of the Claim $2$.  

\ We now prove the inequality \eqref{ineq2}. Indeed, the interpolation inequality implies 
$$
\|\widetilde{u}_n\|_{L^{\widehat{a}}_tL^{\widehat{r}}_x}\leq \|\widetilde{u}_n\|^{1-\frac{\gamma}{\widehat{a}}}_{L^\infty_tL^p_x}\|\widetilde{u}_n\|^{\frac{\gamma}{\widehat{a}}}_{L^\gamma_tL^\gamma_x},
$$
where $\frac{1}{\widehat{r}}=\left(1-\frac{\gamma}{\widehat{a}}\right)\left(\frac{1}{p}\right)+\frac{1}{\widehat{a}}$. Using the values of $\widehat{r}$, $\widehat{a}$ and $\gamma$ we obtain 
$$p=\frac{14-6\theta+10b}{3+b-\theta(2-b)}.
$$
Choosing $0<\theta <2/3$ and $b<1$ then it is easy to see that $2<p<6$. Thus by the Sobolev embedding $H^1 \hookrightarrow L^p$ the inequality \eqref{ineq2} holds. 
\end{proof}
\end{proposition}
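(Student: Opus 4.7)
The strategy is the standard Kenig--Merle concentration--compactness scheme adapted to the INLS setting. Starting from the assumption $\delta_c < E[Q]^{s_c}M[Q]^{1-s_c}$, I would select a sequence of radial solutions $u_n$ with $\|u_{n,0}\|_{L^2}=1$, $E[u_n]^{s_c}\searrow\delta_c$, $\|\nabla u_{n,0}\|_{L^2}^{s_c}<\|\nabla Q\|_{L^2}^{s_c}\|Q\|_{L^2}^{1-s_c}$, and $\|u_n\|_{S(\dot{H}^{s_c})}=+\infty$. By Lemma \ref{LGS}(ii) the sequence $\{u_{n,0}\}$ is uniformly bounded in $H^1$ (in fact, strictly below the ground-state threshold by a factor $a<1$), so Proposition \ref{LPD} yields a profile decomposition
$$u_{n,0}=\sum_{j=1}^M U(-t_n^j)\psi^j+W_n^M$$
together with an $H^1$-Pythagorean expansion and an energy Pythagorean expansion (Proposition \ref{EPE}).

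The first key step is to translate each linear profile $\psi^j$ into a nonlinear profile $\widetilde\psi^j$ with an associated global INLS solution $v^j(t)=\mathrm{INLS}(t)\widetilde\psi^j$ satisfying $\|v^j\|_{S(\dot{H}^{s_c})}<\infty$. If $t_n^j$ converges to some finite $t^*$, define $\widetilde\psi^j=\mathrm{INLS}(t^*)(U(-t^*)\psi^j)$ and invoke Theorem \ref{TG}; if $|t_n^j|\to\infty$, apply the wave operator construction (Proposition \ref{PEWO}, with Remark \ref{backward} for the backward case) using the fact that $\frac12\|\nabla\psi^j\|_{L^2}^2\le E[Q]^{s_c}M[Q]^{1-s_c}/M[\psi^j]^{1-s_c}$, which comes from the energy Pythagorean expansion, item (i) of Lemma \ref{LGS}, and $M[\psi^j]\le 1$. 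In either case, since $M[\widetilde\psi^j]\le 1$ and $E[\widetilde\psi^j]^{s_c}M[\widetilde\psi^j]^{1-s_c}<\delta_c$ (strict inequality coming from $M[\psi^j]<1$ when more than one profile is nonzero), the definition of $\delta_c$ gives $\|v^j\|_{S(\dot{H}^{s_c})}<\infty$.

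Now form the approximate solution $\widetilde u_n(t):=\sum_{j=1}^M v^j(t-t_n^j)$, which solves the perturbed INLS equation with error
$$e_n^M=|x|^{-b}\Bigl(|\widetilde u_n|^2\widetilde u_n-\sum_{j=1}^M|v^j(t-t_n^j)|^2 v^j(t-t_n^j)\Bigr).$$
The heart of the argument is to establish two claims: \textbf{(Claim 1)} the orthogonality \eqref{PD} forces $\|e_n^M\|_{S'(\dot H^{-s_c})}+\|e_n^M\|_{S'(L^2)}+\|\nabla e_n^M\|_{S'(L^2)}\to 0$ as $n\to\infty$ (using the cross-term estimates \eqref{EIerror}, Lemmas \ref{LG1}--\ref{LG3}, and dominated convergence driven by the time shift divergence), and \textbf{(Claim 2)} $\|\widetilde u_n\|_{L^\infty_tH^1_x}$ and $\|\widetilde u_n\|_{S(\dot{H}^{s_c})}$ are bounded uniformly in $M$ and $n$ (the key point: the tail $\sum_{j\ge M_0}\|\psi^j\|^2_{H^1}$ is small by the Pythagorean expansion, so the small-data theory Proposition \ref{GWPH1} controls the tail nonlinear profiles, while the finite head is bounded by construction, and cross interactions vanish by pairwise divergence). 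This is the main obstacle: setting up the correct interpolation between $L^\infty_tH^1_x$, $L^\gamma_tL^\gamma_x$, and the $\dot H^{s_c}$-admissible norm $L^{\widehat a}_tL^{\widehat r}_x$ requires a careful choice of exponents.

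With both claims in hand, combined with $\|U(t)(u_{n,0}-\widetilde u_n(0))\|_{S(\dot H^{s_c})}\to 0$ (which follows from \eqref{AS} and the strong $H^1$-approximation of $U(-t_n^j)\psi^j$ by $\mathrm{INLS}(-t_n^j)\widetilde\psi^j$), the long-time perturbation theorem Proposition \ref{LTP} would give $\|u_n\|_{S(\dot H^{s_c})}<\infty$ for large $n$, contradicting our choice of $u_n$. Therefore only one profile $\psi^1$ can be nonzero, and in fact we must have $M[\psi^1]=1$ (so $W_n^1\to 0$ in $L^2$) and the limit energy $E[\widetilde\psi^1]^{s_c}=\delta_c$, since otherwise the same contradiction would apply. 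Setting $u_{c,0}=\widetilde\psi^1$ and $u_c=\mathrm{INLS}(t)u_{c,0}$, properties (i)--(iii) follow from the construction and from the corresponding inequalities for $\widetilde\psi^1$; property (iv) follows because if $\|u_c\|_{S(\dot H^{s_c})}<\infty$ then choosing $\widetilde u_n(t):=\mathrm{INLS}(t-t_n^1)u_{c,0}$ as the comparison solution and applying Proposition \ref{LTP} (with $e\equiv 0$) would again give $\|u_n\|_{S(\dot H^{s_c})}<\infty$, a contradiction.
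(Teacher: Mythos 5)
Your proposal is correct and follows essentially the same route as the paper: the same minimizing sequence, the same profile decomposition with nonlinear profiles built via Theorem \ref{TG} (finite time shifts) or the wave operator (divergent time shifts), the same approximate solution $\widetilde u_n$ with Claims 1 and 2 fed into the long-time perturbation theory, and the same argument for properties (i)--(iv) of the resulting single-profile critical solution. No substantive differences or gaps.
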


\ In the next proposition, we prove the precompactness of the flow associated to the critical solution $u_c$. The argument is very similar to Holmer-Roudenko \cite[Proposition $5.5$]{HOLROU}. %For the sake of completeness are provide the details.

\begin{proposition}\label{PSC}{\bf (Precompactness of the flow of the critical solution)} Let $u_c$ be as in Proposition \ref{ECS} and define
 $$
 K=\{u_c(t)\;:\;t\in[0,+\infty)\}\subset  H^1.
 $$
 Then $K$ is precompact in $H^1(\mathbb{R}^3)$.
 \begin{proof} 
Let $\{t_n\}\subseteq [0,+\infty )$ a sequence of times and $\phi_n=u_c(t_n)$ be a uniformly bounded sequence in $H^1(\mathbb{R}^3)$. We need to show that $u_c(t_n)$ has a subsequence converging in $H^1(\mathbb{R}^3)$. If $\{t_n\}$ is bounded, we can assume $t_n \rightarrow t^*$ finite, so by the continuity of the solution in $H^1(\mathbb{R}^3)$ the result is clear. Next, assume that $t_n\rightarrow +\infty$. 
	
 \ The linear profile expansion (Proposition \ref{LPD}) implies the existence of profiles $\psi^j$ and a remainder $W_n^M$ such that
  $$
  u_c(t_n)=\sum_{j=1}^{M}U(-t_n^j)\psi^j+W_n^M,
  $$	
  with $|t_n^j-t_n^k|\rightarrow +\infty$ as $n\rightarrow +\infty$ for any $j\neq k$. Then, by the energy Pythagorean expansion (Proposition \ref{EPE}), we get
  \begin{equation}\label{ECCS}
  \sum_{j=1}^{M}\lim_{n\rightarrow +\infty}E[U(-t_n^j)\psi^j]+\lim_{n\rightarrow +\infty} E[W_n^M]=E[u_c]=\delta_c,	
  \end{equation}
 where we have used Proposition \ref{ECS} (ii). This implies that
 \begin{equation*}
 \lim_{n\rightarrow +\infty}E[U(-t_n^j)\psi^j]\leq \delta_c\;\;\;\;\forall\;j, 
 \end{equation*}
 since each energy in \eqref{ECCS} is nonnegative by Lemma \eqref{LGS} (i).	\\
 Moreover, by \eqref{PDNHs} with $s=0$ we obtain
 \begin{equation}\label{MCCS} %massa compa crit sol
 \sum_{j=1}^{M}M[\psi^j]+\lim_{n\rightarrow +\infty}M[W_n^M]=M[u_c]=1,
 \end{equation}
 by Proposition \ref{ECS} (i).
 	
  \ If more than one $\psi^j \neq 0$, similar to the proof in Proposition \ref{ECS}, we have a contradiction
  with the fact that $\|u_c\|_{S(\dot{H}^{s_c})}=+\infty$. Thus, we address the case that only $\psi^j=0$ for all $j\geq 2$, and so
  \begin{equation}\label{CCS1}%compa da solo criti 1
   u_c(t_n)=U(-t_n^1)\psi^1+W_n^M.
  \end{equation}
 Also as in the proof of Proposition \ref{ECS}, we obtain that
 \begin{equation}\label{MECS}
 M[\psi^1] =M[u_c]=1\;\;\;\textnormal{and}\;\;\;\lim_{n\rightarrow +\infty}E[U(-t_n^1)\psi^1]=\delta_c,	
 \end{equation}
 and using \eqref{ECCS}, \eqref{MCCS} together with \eqref{MECS}, we deduce that 
 \begin{equation}
 \lim_{n\rightarrow +\infty}M[W_n^M]=0\;\;\;\textnormal{and}\;\;\;\lim_{n\rightarrow +\infty}E[W_n^M]=0.
 \end{equation}	
  Thus, Lemma \ref{LGS} (i) yields
  \begin{equation}\label{ERCS}
  \lim_{n\rightarrow +\infty}\|W_n^M\|_{H^1}=0.
  \end{equation} 
	
  \ We claim now that $t^1_n$ converges to some finite $t^*$ (up to a subsequence). In this case, since $U(-t_n^1)\psi^1\rightarrow U(-t^*)\psi^1$ in $H^1(\mathbb{R}^3)$ and $\eqref{ERCS}$ holds, the relation \eqref{CCS1} implies that $u_c(t_n)$ converges in $H^1(\mathbb{R}^3)$, concluding the
  proof. 
  
  \ Assume by contradiction that $|t^1_n|\rightarrow +\infty$, then we have two cases to consider. If $t^1_n\rightarrow -\infty$, by \eqref{CCS1}  
  $$
  \|U(t)u_c(t_n)\|_{S(\dot{H}^{s_c};[0,+\infty))}\leq\|U(t-t_n^1)\psi^1\|_{S(\dot{H}^{s_c};[0,+\infty))}+\|U(t)W_n^M\|_{S(\dot{H}^{s_c};[0,+\infty))}.
  $$	
 Next, note that since $t^1_n\rightarrow -\infty$ we obtain
 \begin{equation*}\label{PPCS1}
  \|U(t-t_n^1)\psi^1\|_{S(\dot{H}^{s_c};[0,+\infty))}\leq \|U(t)\psi^1\|_{S(\dot{H}^{s_c};[-t_n^j,+\infty))}\leq \frac{1}{2}\delta,
 \end{equation*}
 and also 
 \begin{equation*}\label{PPCS2}
 \|U(t)W_n^M\|_{S(\dot{H}^{s_c})}\leq \frac{1}{2}\delta,
 \end{equation*}
given $\delta>0$ for $n, M$ sufficiently large, where in the last inequality we have used \eqref{SE2} and \eqref{ERCS}. Hence,   
 $$
 \|U(t)u_c(t_n)\|_{S(\dot{H}^{s_c};[0,+\infty))}\leq \delta.
 $$ 
 Therefore, choosing $\delta>0$ sufficiently small, by the small data theory (Proposition \ref{GWPH1}) we get that $ \|u_c\|_{S(\dot{H}^{s_c})}\leq 2\delta,$
% $$ \|u_c\|_{S(\dot{H}^{s_c})}\leq 2\delta,$$
 which is a contradiction with Proposition \ref{ECS} (iv).
 
 \ On the other hand, if $t^1_n\rightarrow +\infty$, the same arguments also give that for $n$ large,    
 $$
 \|U(t)u_c(t_n)\|_{S(\dot{H}^{s_c};(-\infty,0])}\leq  \delta,
 $$
 and again the small data theory (Proposition \ref{GWPH1}) implies $
 \|u_c\|_{S(\dot{H}^{s_c};(-\infty,t_n])}\leq 2\delta.
 $
 % $$
 %\|u_c\|_{S(\dot{H}^{s_c};(-\infty,t_n])}\leq 2\delta.
 %$$
 Since $t_n\rightarrow +\infty$ as $n\rightarrow +\infty$, from the last inequality we get  $\|u_c\|_{S(\dot{H}^{s_c})}\leq 2\delta$, which is also a contradiction. Thus, $t_n^1$ must converge to some finite $t^*$, completing the proof of Proposition \ref{PSC}.
	
  	\end{proof}

\end{proposition}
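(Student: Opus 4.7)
The plan is to argue by sequential compactness: given any sequence $\{u_c(t_n)\}_n \subset K$, extract a subsequence converging in $H^1(\mathbb{R}^3)$. Without loss of generality (passing to a subsequence and using continuity of the flow to handle bounded $t_n$), I may assume $t_n \to +\infty$. The sequence $\phi_n := u_c(t_n)$ is radial and uniformly bounded in $H^1$ by Theorem \ref{TG}, so the profile decomposition (Proposition \ref{LPD}) applies to give, for each fixed $M$,
$$
\phi_n \;=\; \sum_{j=1}^{M} U(-t_n^j)\psi^j + W_n^M,
$$
with the orthogonality \eqref{PD}, the asymptotic smallness \eqref{AS}, and the Pythagorean expansions \eqref{PDNHs} and \eqref{PDE}.

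The first key step is to reduce to a single nontrivial profile. Using Proposition \ref{ECS} (i), (ii), the mass and energy Pythagorean expansions force $\sum_j M[\psi^j] \leq 1 = M[u_c]$ and $\sum_j \lim_n E[U(-t_n^j)\psi^j] \leq \delta_c = E[u_c]^{1/s_c}$ (nonnegativity of each limit energy follows from Lemma \ref{LGS} (i) once one verifies the gradient/mass bound for each profile, exactly as in the proof of Proposition \ref{ECS}). If two or more $\psi^j$ were nontrivial, each would satisfy a \emph{strict} inequality $E[\widetilde{\psi}^j]^{s_c}M[\widetilde{\psi}^j]^{1-s_c}<\delta_c$, after passing (via the wave operator, Proposition \ref{PEWO}, in the case $|t_n^j|\to\infty$, or by continuity in the case $t_n^j\to t^*$ finite) to associated nonlinear profiles $v^j = \mathrm{INLS}(t)\widetilde{\psi}^j$. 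By definition of $\delta_c$, each $v^j$ would have finite $S(\dot H^{s_c})$ norm, and pasting them together with the long-time perturbation argument of Proposition \ref{LTP} would give $\|u_c\|_{S(\dot H^{s_c})}<+\infty$, contradicting Proposition \ref{ECS} (iv). Hence exactly one profile, say $\psi^1$, is nonzero, $M[\psi^1]=1$, $\lim_n E[U(-t_n^1)\psi^1] = \delta_c$, and consequently $M[W_n^M]\to 0$, $E[W_n^M]\to 0$; by Lemma \ref{LGS} (i) this yields $\|W_n^M\|_{H^1}\to 0$.

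The second, and main, obstacle is ruling out $|t_n^1|\to\infty$; this is where the forward-in-time structure of $u_c$ (and the hypothesis $t_n\to+\infty$) is used crucially. Suppose $t_n^1\to -\infty$. Then, since $\|U(t)\psi^1\|_{S(\dot H^{s_c};[-t_n^1,\infty))}\to 0$ by the monotone convergence of the Strichartz norm (same ingredient as \eqref{U(t)phi} in the wave-operator proof), and $\|U(t)W_n^M\|_{S(\dot H^{s_c})}\to 0$ from \eqref{ERCS} combined with \eqref{SE2}, we get
$$
\|U(t)u_c(t_n)\|_{S(\dot H^{s_c};[0,\infty))} \leq \|U(t-t_n^1)\psi^1\|_{S(\dot H^{s_c};[0,\infty))} + \|U(t)W_n^M\|_{S(\dot H^{s_c};[0,\infty))} < \delta,
$$
for $\delta>0$ arbitrarily small and $n,M$ large. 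The small-data global theory (Proposition \ref{GWPH1}) then gives $\|u_c\|_{S(\dot H^{s_c};[t_n,\infty))} \lesssim \delta$, and since $t_n\to+\infty$ this forces $\|u_c\|_{S(\dot H^{s_c})}<\infty$, contradicting Proposition \ref{ECS} (iv). A symmetric argument handles $t_n^1\to+\infty$, now controlling the backward Strichartz norm on $(-\infty, 0]$ and extracting a contradiction on $(-\infty, t_n]$.

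Therefore $\{t_n^1\}$ is bounded; extracting a further subsequence we may assume $t_n^1\to t^*\in\mathbb{R}$. Continuity of the linear group on $H^1$ gives $U(-t_n^1)\psi^1\to U(-t^*)\psi^1$ in $H^1$, and combined with $\|W_n^M\|_{H^1}\to 0$ this yields $u_c(t_n)\to U(-t^*)\psi^1$ in $H^1(\mathbb{R}^3)$, establishing precompactness of $K$.
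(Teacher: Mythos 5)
Your proposal is correct and follows essentially the same route as the paper's proof: profile decomposition applied to $u_c(t_n)$ with $t_n\to+\infty$, reduction to a single nonzero profile via the mass/energy Pythagorean expansions and the long-time perturbation argument (exactly as in Proposition \ref{ECS}), vanishing of the remainder in $H^1$ by Lemma \ref{LGS} (i), and exclusion of $|t_n^1|\to\infty$ through the small-data theory applied forward on $[0,+\infty)$ and backward on $(-\infty,0]$. No substantive differences from the paper's argument.
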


%%%%%%%%%%%%%%%%%%%%%%%%%%%%%%%%%%%%%%%%%%%%%%%%%%%%%%%%%%%%%%%%%%%%%%%%%%%%%%%%%%%%%%%%%%%%%%%%%%%

%%%%%%%%%%%%%%%%%%%%%%%%%%%%%%%%%%%%%%%%%%%%%%%%%%%%%%%%%%%%%%%%%%%%%%%%%%%%%%%%%%%%%%%%%%%%%%%%%%%
\section{Rigidity theorem}

\ The main result of this section is a rigidity theorem, which implies that the critical solution $u_c$ constructed in Section \ref{CCS} must be identically zero and so reaching a contradiction in view of Proposition \ref{ECS} (iv). Before proving this result, we begin showing some preliminaries that will help us in the proof. 

\begin{proposition}\label{PFIUL}{\bf (Precompactness of the flow implies uniform localization)} Let $u$ be a solution of \eqref{INLS} such that
$$
K=\{u(t)\;:\;t\in[0,+\infty)\}
$$
is precompact in $H^1(\mathbb{R}^3)$. Then for each $\varepsilon>0$, there exists $R>0$ so that
\begin{equation}\label{PFIUL1}
\int_{|x|>R}|\nabla u(t,x)|^2dx\leq \varepsilon,\;\textnormal{for all}\;0\leq t<+\infty.
	\end{equation}
 \begin{proof} The proof follows the same steps as in Holmer-Roudenko \cite[Lemma $5.6$]{HOLROU}. So we omit the details 
 %The proof is similar to that in Holmer-Roudenko \cite[Lemma $5.6$]{HOLROU}. If \eqref{PFIUL1} does not hold, then there exists $\varepsilon>0$ and a sequence $t_n\rightarrow +\infty$ such that, for each $n\in \mathbb{N}$,
 %\begin{equation}\label{PC1}%Precomp 1
 %\int_{|x|>n}|\nabla u(t,x)|^2dx\geq 2\varepsilon.
%\end{equation}
% The fact that $K$ is precompact yields that there exists some $\phi\in H^1$ such that, up to a subsequence of $t_n$, $u(t_n)\rightarrow \phi$ in $H^1$, which implies
% \begin{equation}\label{PC2}
%  \int |\nabla(u(t_n)-\phi)|^2dx<\frac{1}{4}\varepsilon.
%\end{equation}
% On the other hand, since $\phi\in H^1$, taking $n$ sufficiently large we can get
% \begin{equation}\label{PC3}
% \int_{|x|>n}|\nabla \phi|^2dx\leq \frac{1}{4}\varepsilon.
%\end{equation}
% Thus, \eqref{PC2} and \eqref{PC3} lead to
% $$
% \int_{|x|>n}|\nabla u(t,x)|^2dx\leq  2\int |\nabla(u(t_n)-\phi)|^2dx+ 2\int_{|x|>n}|\nabla \phi|^2dx<\varepsilon,
 %$$
 %which is a contradiction with \eqref{PC1}.
		\end{proof}
\end{proposition}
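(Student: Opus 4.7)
The plan is to exploit the precompactness of $K$ in $H^1(\mathbb{R}^3)$ directly through a finite covering argument, reducing the uniform-in-time tail estimate to a tail estimate at finitely many fixed times. The key observation is that any individual element of $H^1(\mathbb{R}^3)$ has its gradient mass decaying at spatial infinity, and precompactness lets us upgrade this pointwise-in-$t$ property to a uniform one.

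More precisely, fix $\varepsilon>0$. Since $K$ is precompact in $H^1(\mathbb{R}^3)$, it is totally bounded, so there exist finitely many times $t_1,\dots,t_N\in[0,+\infty)$ such that the balls $B_{H^1}(u(t_j),\sqrt{\varepsilon}/2)$ cover $K$. For each $j$, because $u(t_j)\in H^1(\mathbb{R}^3)$, the Dominated Convergence Theorem applied to $|\nabla u(t_j,\cdot)|^2\,\mathbf{1}_{\{|x|>R\}}$ gives some $R_j>0$ with
\begin{equation*}
\int_{|x|>R_j}|\nabla u(t_j,x)|^2\,dx<\frac{\varepsilon}{4}.
\end{equation*}
Set $R=\max_{1\le j\le N}R_j$.

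For an arbitrary $t\in[0,+\infty)$, choose $j$ such that $\|u(t)-u(t_j)\|_{H^1}<\sqrt{\varepsilon}/2$. Then by the triangle inequality in $L^2(\{|x|>R\})$,
\begin{equation*}
\left(\int_{|x|>R}|\nabla u(t,x)|^2\,dx\right)^{1/2}\le\left(\int_{|x|>R}|\nabla u(t_j,x)|^2\,dx\right)^{1/2}+\|\nabla(u(t)-u(t_j))\|_{L^2}<\frac{\sqrt{\varepsilon}}{2}+\frac{\sqrt{\varepsilon}}{2}=\sqrt{\varepsilon},
\end{equation*}
which yields \eqref{PFIUL1}.

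There is no real obstacle here: the argument is a routine total-boundedness plus triangle-inequality reduction, and does not use the equation \eqref{INLS} or any specific structure of $u$ beyond membership in $H^1$. The only subtlety worth noting is that one must cover $K$ (the orbit) itself rather than a specific trajectory-in-time, which is exactly what precompactness of $K$ in $H^1(\mathbb{R}^3)$ delivers.
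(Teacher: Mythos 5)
Your argument is correct and is precisely the standard covering argument from Holmer--Roudenko \cite[Lemma 5.6]{HOLROU}, which is exactly what the paper invokes (it omits the details and refers to that lemma). No issues.
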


We will also need the following local virial identity. %VI=virial identity

\begin{proposition}\label{VI}  {\bf (Virial identity)} Let $\phi\in C^\infty_0(\mathbb{R}^3)$, $\phi \geq 0$ and $T>0$. For $R>0$ and $t\in [0,T]$ define
\begin{equation*}\label{DFZ}% DFZ=definition function z
z_R(t)=\int_{\mathbb{R}^3} R^2 \phi\left(\frac{x}{R}\right)|u(t,x)|^2dx,
\end{equation*}  
where $u$ is a solution of \eqref{INLS}. Then we have
\begin{equation}\label{FD}%First derivate
z'_R(t)=2R Im\int_{\mathbb{R}^3} \nabla\phi\left(\frac{x}{R}\right)\cdot\nabla u \bar{u}dx
\end{equation}
and 
\begin{align}\label{SD}
z''_R(t)&= 4\sum_{j,k} Re\int \frac{\partial u}{\partial_{x_k}}  \frac{\partial \bar{u}}{\partial_{x_j}}\frac{\partial^2\phi}{\partial x_k\partial x_j}\left(\frac{x}{R}\right)dx-\frac{1}{R^2}\int |u|^2\Delta^2 \phi\left(\frac{x}{R}\right) dx \nonumber \\
	& -\int |x|^{-b}|u|^{4}\Delta \phi\left(\frac{x}{R}\right) dx+R\int \nabla (|x|^{-b})\cdot\nabla \phi\left(\frac{x}{R}\right) |u|^{4}dx.
 \end{align}

 \begin{proof} 
 We first compute $z'_R$. Note that
 $$
 \partial_t|u|^2=2Re( u_t\bar{u})=2Im(iu_t\bar{u}). 
 $$
Since $u$ satisfies \eqref{INLS} and using integration by parts, we have
\begin{eqnarray*}
z'_R(t)&=&2Im\int  R^2\phi\left(\frac{x}{R}\right)iu_t \bar{u}dx\\
&=&-2Im\int R^2\phi\left(\frac{x}{R}\right)\left(\Delta u\bar{u}+|x|^{-b}|u|^{4}\right)dx\\
&=& -2Im\int R^2\phi\left(\frac{x}{R}\right)\nabla \cdot (\nabla u \bar{u})dx\\
&=&2RIm\int \nabla \phi\left(\frac{x}{R}\right)\cdot\nabla u \bar{u}dx.
\end{eqnarray*}
On the other hand, using again integration by parts and the fact that $z-\bar{z}=2iIm z$, we obtain
\begin{eqnarray*}
z''_R(t)&=& 2RIm\int \nabla \phi\left(\frac{x}{R}\right)\cdot \left(  \bar{u}_t\nabla u +\bar{u}\nabla  u_t\right)dx\\
	&=& 2RIm\left\{\sum_{j}\int  \bar{u}_t\partial_{x_j}u\partial_{x_j}\phi\left(\frac{x}{R}\right)dx- u_t\partial_{x_j}\left(\bar{u}\partial_{x_j}\phi\left(\frac{x}{R}\right)\right)  dx \right\}\\
	&=& 2RIm\left\{\sum_{j}2i Im \int \bar{u}_t\partial_{x_j}u\partial_{x_j}\phi\left(\frac{x}{R}\right) dx-\int\frac{1}{R}u_t \bar{u}\partial^2_{x_j}\phi\left(\frac{x}{R}\right) dx \right\}\\
	&=&4R I_1+2I_2,
\end{eqnarray*}
where
$$
I_1=Im \sum_{j}\int \bar{u}_t\partial_{x_j}u\partial_{x_j}\phi\left(\frac{x}{R}\right)\;\;\textnormal{and}\;\;I_2=-Im \sum_{j}\int u_t\bar{u} \partial^2_{x_j}\phi\left(\frac{x}{R}\right) dx.
$$
We start considering $I_2$. Since $u$ is a solution of \eqref{INLS} we get
\begin{eqnarray*}
I_2&=&-Im\left\{\sum_{j,k}  \int i \partial^2_{x_k}u \bar{u}\partial^2_{x_j}  \phi\left(\frac{x}{R}\right) dx  \right\}-\sum_{j}\int |x|^{-b}|u|^{4}\partial^2_{x_j}\phi\left(\frac{x}{R}\right) dx\\
&= & Im \left\{\sum_{j,k}  \int i \left( |\partial_{x_k}u|^2 \partial^2_{x_j}  \phi\left(\frac{x}{R}\right)+ \frac{1}{R}\partial_{x_k}u\bar{u}\frac{\partial^3\phi}{\partial x_k \partial x^2_j}\left(\frac{x}{R}\right) \right)dx\right\} \\
&  &  - \int |x|^{-b}|u|^{4}\Delta \phi\left(\frac{x}{R}\right) dx\\
&=&\int \left(|\nabla u|^2-|x|^{-b}|u|^{4}\right) \Delta \phi\left(\frac{x}{R}\right) dx+\frac{1}{R} \sum_{j,k} Re\int \partial_{x_k}u\bar{u} \frac{\partial^3\phi}{\partial x_k \partial x^2_j}\left(\frac{x}{R}\right)dx, 
\end{eqnarray*}
where we have used integration by parts and the fact that $Im (iz)=Re (z)$. Furthermore, since $\partial_{x_k}|u|^2=2 Re \left(\partial_{x_k}u\bar{u}\right)$ another integration by parts yields
\begin{align}\label{VI1} %VI1=virial identity 1
I_2=&\int \left(|\nabla u|^2-|x|^{-b}|u|^{4}\right) \Delta \phi\left(\frac{x}{R}\right) dx-\frac{1}{2R^2}\sum_{j,k}\int |u|^2\frac{\partial^4\phi}{\partial x^2_k\partial x^2_j}\left(\frac{x}{R}\right)dx \nonumber \\
 =&\int \left(|\nabla u|^2-|x|^{-b}|u|^{4}\right) \Delta \phi\left(\frac{x}{R}\right) dx-\frac{1}{2R^2}\int |u|^2\Delta^2\phi\left(\frac{x}{R}\right) dx.
\end{align}

\indent Next, we deduce using the equation \eqref{INLS} and $Im (z)=-Im (\bar{z})$ that

\begin{align*}
I_1&=-Im \sum_{j}u_t \partial_{x_j}\bar{u}\partial_{x_j} \phi\left(\frac{x}{R}\right) dx\\
&=-Im i\sum_{j}\left\{\int \left(    \Delta u + |x|^{-b}|u|^{2}u \right)\partial_{x_j}\bar{u}\partial_{x_j} \phi\left(\frac{x}{R}\right) dx   \right\}\\
&= -Re \sum_{j,k}\int \partial^2_{x_k} u\partial_{x_j} \bar{u} \partial_{x_j} \phi\left(\frac{x}{R}\right) dx-\sum_j \int |x|^{-b}\partial_{x_j} \phi\left(\frac{x}{R}\right)|u|^{2}  Re (\partial_{x_j}\bar{u} u) dx\\
&=-Re \sum_{j,k}\int \partial^2_{x_k} u\partial_{x_j} \bar{u} \partial_{x_j} \phi\left(\frac{x}{R}\right)dx-\frac{1}{4}\sum_j \int |x|^{-b}\partial_{x_j}\phi\left(\frac{x}{R}\right)\partial_{x_j}(|u|^{4}) dx\\
&\equiv A+B,
\end{align*}
where we have used $Im (iz)=Re (z)$ and $\partial_{x_j}(|u|^{4})=4|u|^2 Re (\partial_{x_j}\bar{u} u) $. Moreover, since $\partial_{x_j}|\partial_{x_k}u|^2=2Re \left(\partial_{x_k}u \frac{\partial^2\bar{u}}{ \partial x_k \partial x_j}\right) $ and using integration by parts twice, we get

\begin{eqnarray*}
A&=& Re \sum_{j,k} \left\{  \int \left(\partial_{x_j}\phi\left(\frac{x}{R}\right) \partial_{x_k}u \frac{\partial^2\bar{u}}{ \partial x_k \partial x_j}+ \frac{1}{R}  \partial_{x_k}u    \partial_{x_j}\bar{u}  \frac{  \partial^2\phi}{\partial x_j\partial x_k}\left(\frac{x}{R}\right)  \right)dx \right\}\\
&=&-\sum_{j,k}\frac{1}{2R} \int |\partial_{x_k}u|^2\partial^2_{x_j}\phi\left(\frac{x}{R}\right) dx+\frac{1}{R} \sum_{i,j} Re \int \partial_{x_k}u    \partial_{x_j}\bar{u}  \frac{  \partial^2\phi}{\partial x_j\partial x_k}\left(\frac{x}{R}\right) dx \\
&=&-\frac{1}{2R} \int |\nabla u|^2 \Delta \phi\left(\frac{x}{R}\right) dx+\frac{1}{R} \sum_{i,j} Re \int \partial_{x_k}u    \partial_{x_j}\bar{u}  \frac{  \partial^2\phi}{\partial x_j\partial x_k}\left(\frac{x}{R}\right) dx.
\end{eqnarray*}
Similarly, integrating by parts
\begin{align*}
B&=\frac{1}{4} \sum_j \left( \int \partial_{x_j}\phi\left(\frac{x}{R}\right)\partial_{x_j}(|x|^{-b}) |u|^{4} dx+ \frac{1}{R} \int \partial^2_{x_j}\phi\left(\frac{x}{R}\right)|x|^{-b} |u|^{4} dx \right) \\
&=\frac{1}{4}  \int \nabla \phi\left(\frac{x}{R}\right)\cdot \nabla (|x|^{-b}) |u|^{4} dx+ \frac{1}{4R}  \int \Delta \phi\left(\frac{x}{R}\right)|x|^{-b} |u|^{4} dx. \\
\end{align*}
Therefore,
\begin{align}\label{VI2}
I_1&=-\frac{1}{2R} \int |\nabla u|^2 \Delta \phi\left(\frac{x}{R}\right) dx+\frac{1}{R} \sum_{i,j}Re  \int \partial_{x_k}u    \partial_{x_j}\bar{u}  \frac{  \partial^2\phi}{\partial x_j\partial x_k}\left(\frac{x}{R}\right) dx\nonumber \\
& +\frac{1}{4}  \int \nabla \phi\left(\frac{x}{R}\right)\cdot \nabla (|x|^{-b}) |u|^{4} dx+ \frac{1}{4R}  \int \Delta \phi\left(\frac{x}{R}\right)|x|^{-b} |u|^{4} dx.
\end{align}
Finally it is easy to check that combining \eqref{VI1} and \eqref{VI2} we obatin \eqref{SD}, which complete the proof. 
\end{proof} 

\end{proposition}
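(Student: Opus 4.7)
The plan is to carry out a direct computation by differentiating $z_R(t)$ in time and repeatedly invoking the equation $i u_t = -\Delta u - |x|^{-b}|u|^{2}u$ together with integration by parts. For the first identity, I would start from $\partial_t|u|^2 = 2\,\mathrm{Re}(\bar u\,u_t) = 2\,\mathrm{Im}(iu_t\bar u)$ and substitute $iu_t = -\Delta u - |x|^{-b}|u|^{2}u$; the nonlinear contribution $|x|^{-b}|u|^{4}$ is real so it drops out when we take the imaginary part. The remaining term $-2\,\mathrm{Im}\int R^{2}\phi(x/R)\Delta u\,\bar u\,dx$ becomes, after one integration by parts and the observation that $|\nabla u|^{2}$ is real, precisely $2R\,\mathrm{Im}\int\nabla\phi(x/R)\cdot\nabla u\,\bar u\,dx$, which is \eqref{FD}.

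For the second identity I would differentiate \eqref{FD} in $t$, producing two pieces involving $\nabla u\,\bar u_t$ and $\nabla u_t\,\bar u$. Using $z-\bar z=2i\,\mathrm{Im}\,z$, one of these can be rewritten as twice the imaginary part of a single expression, while the other is moved by an integration by parts (shifting the $t$ derivative onto $\bar u$) at the cost of a $\partial_{x_j}^{2}\phi$ factor and a $1/R$. I then substitute $iu_t$ from the equation in each piece. In the kinetic part, after integrating by parts twice to symmetrize in the $(j,k)$ indices, one obtains the Hessian term $4\sum_{j,k}\mathrm{Re}\int \partial_{x_k}u\,\partial_{x_j}\bar u\,\partial^{2}_{x_kx_j}\phi(x/R)\,dx$ together with the biharmonic remainder $-R^{-2}\int|u|^{2}\Delta^{2}\phi(x/R)\,dx$. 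In the nonlinear part, using $\partial_{x_j}(|u|^{4})=4|u|^{2}\mathrm{Re}(\bar u\,\partial_{x_j}u)$ and integrating by parts, letting the derivative fall either on $\phi(x/R)$ or on $|x|^{-b}$, yields $-\int|x|^{-b}|u|^{4}\Delta\phi(x/R)\,dx + R\int\nabla(|x|^{-b})\cdot\nabla\phi(x/R)|u|^{4}\,dx$.

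The main difficulty will be bookkeeping: tracking the correct numerical coefficients after several integrations by parts and keeping straight when a term is real or imaginary. The inhomogeneous weight $|x|^{-b}$ requires some care since $\nabla(|x|^{-b})$ is singular at the origin; however, because $\phi\in C_0^{\infty}(\mathbb{R}^{3})$ the weight appears only on a compact set and, for $0<b<1$, the resulting integrals are absolutely convergent in $H^{1}$. The formal manipulations can be justified by a standard regularization argument (first proving the identity for smooth approximating solutions and then passing to the limit using the $H^{1}$ well-posedness and continuous dependence from Section~4). Combining the kinetic and potential contributions in the expression for $z''_R(t)$ then produces exactly \eqref{SD}.
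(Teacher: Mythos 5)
Your proposal follows essentially the same route as the paper: compute $z_R'$ by substituting $iu_t=-\Delta u-|x|^{-b}|u|^2u$ into $\partial_t|u|^2=2\,\mathrm{Im}(iu_t\bar u)$ and integrating by parts once, then differentiate \eqref{FD}, split into the two pieces $\bar u_t\nabla u$ and $\bar u\nabla u_t$ (handled via $z-\bar z=2i\,\mathrm{Im}\,z$ and one integration by parts), substitute the equation, symmetrize the kinetic part into the Hessian term plus the biharmonic remainder, and treat the nonlinear part via $\partial_{x_j}(|u|^4)=4|u|^2\mathrm{Re}(\bar u\,\partial_{x_j}u)$ with the derivative landing on either $\phi(x/R)$ or $|x|^{-b}$. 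The resulting terms and coefficients match \eqref{SD}, and your added remark on justifying the formal manipulations by regularization is a reasonable supplement to what the paper leaves implicit.
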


\indent Finally, we apply the previous results to proof the rigidity theorem.
\begin{theorem}\label{RT}{\bf (Rigidity)} Let $u_0\in H^1(\mathbb{R}^3)$ satisfying
$$
E[u_0]^{s_c}M[u_0]^{1-s_c} <E[Q]^{s_c}M[Q]^{1-s_c}
$$
and 
$$
\|  \nabla u_{0} \|_{L^2}^{s_c} \|u_{0}\|_{L^2}^{1-s_c}<\|\nabla Q \|_{L^2}^{s_c} \|Q\|_{L^2}^{1-s_c}.
$$
If the global $H^1(\mathbb{R}^3)$-solution $u$ with initial data $u_0$ satisfies
	$$
	K=\{u(t)\;:\;t\in[0,+\infty)\}\; \textnormal{is precompact in}\; H^1(\mathbb{R}^3)
$$
then $u_0$ must vanishes, i.e., $u_0=0$.
	
\begin{proof} By Theorem \ref{TG} we have that $u$ is global in $H^1(\mathbb{R}^3)$ and
	
	\begin{equation}\label{TR1} %TR1 teorema rigidez 1
	\|  \nabla u(t) \|_{L^2_x}^{s_c} \|u(t)\|_{L^2_x}^{1-s_c}<\|\nabla Q \|_{L^2}^{s_c} \|Q\|_{L^2}^{1-s_c}.
	\end{equation}
	
	\ On the other hand, let $\phi \in C_0^\infty$ be radial, with
	$$
	\phi(x)=\left\{\begin{array}{cl}
	|x|^2&\textnormal{for}\;|x|\leq 1\\
	0&\textnormal{for}\;|x|\geq 2.
	\end{array}\right.
$$
Then, using \eqref{FD}, the H\"older inequality and \eqref{TR1} we obtain
\begin{eqnarray*}
|z'_R(t)| &\leq & cR\int_{|x|<2R}|\nabla u(t)||u(t)|dx\leq cR\|\nabla u(t)\|_{L^2}\|u(t)\|_{L^2}\lesssim cR.
\end{eqnarray*}
 Hence,
\begin{eqnarray}\label{FDI}%FDI=first deriv ineq
|z'_R(t)-z'_R(0)|\leq |z'_R(t)|+|z'_R(0)|\leq 2cR,\;\;\textnormal{for all }\;t>0. 
%&\leq& cR,\;\;\textnormal{for all }\;t>0.
\end{eqnarray}
  
\ The idea now is to obtain a lower bound for $z''_R(t)$ strictly greater than zero and reach a contradiction. Indeed, from the local virial identity \eqref{SD}
  \begin{align}\label{SDz} %second deriv of Z
	z''_R(t)&= 4\sum_{j,k} Re\int \partial_{x_k} u \partial_{x_j}\bar{u}\frac{\partial^2\phi}{\partial x_k\partial x_j}\left(\frac{x}{R}\right)dx-\frac{1}{R^2}\int |u|^2\Delta^2 \phi\left(\frac{x}{R}\right) dx   \nonumber   \\
    &  -\int |x|^{-b}|u|^{4}\Delta \phi\left(\frac{x}{R}\right) dx+R\int \nabla (|x|^{-b})\cdot\nabla \phi\left(\frac{x}{R}\right) |u|^{4}dx \nonumber  \\
    & = 8 \| \nabla u \|^2_{L^2_x}- 2(3+b) \left\||x|^{-b}|u|^{4}\right\|_{L^1_x}+R(u(t)),
   \end{align}
  where
  \begin{align*}
  R(u(t))&= 4 \sum\limits_{j}Re\int \left( \partial^2_{x_j} \phi\left(\frac{x}{R}\right)-2\right )|\partial_{x_j}u|^2
  + 4 \sum\limits_{j\neq k}Re\int \frac{\partial^2\phi}{\partial x_k\partial x_j}\left(\frac{x}{R}\right)\partial_{x_k}u\partial_{x_j}\bar{u}\\
  &- \frac{1}{R^2}\int |u|^2\Delta^2\phi\left(\frac{x}{R}\right)+R\int \nabla(|x|^{-b})\cdot\nabla \phi\left(\frac{x}{R}\right)|u|^{4}\\
  &+\int \left( -\left( \Delta\phi\left(\frac{x}{R}\right)-6 \right)+2b\right) |x|^{-b}|u|^{4}.
  \end{align*}
 Since $\phi(x)$ is radial and $\phi(x)=|x|^2$ if $|x|\leq 1$, the sum of all terms in the definition of $R(u(t))$ integrating over $|x|\leq R$ is zero. Indeed, for the first three terms this is clear by the definition of $\phi(x)$. In the fourth term we have 
 $$
 2\int_{|x|\leq R} \nabla(|x|^{-b})\cdot x|u|^4dx=2\int_{|x|\leq R} -b|x|^{-b}|u|^4dx,
 $$ 
 and adding the last term (also integrating over $|x|\leq R$) we get zero since $\Delta \phi =6$, if $|x|\leq R$. Therefore, for the integration on the region $|x|> R$, we have the following bound
\begin{eqnarray*}
|R(u(t))|&\leq& c\int_{|x|>R} \left(|\nabla u(t)|^2+\frac{1}{R^2}|u(t)|^2+|x|^{-b}|u(t)|^{4}\right)dx
\end{eqnarray*}
\begin{eqnarray}\label{RESTO}
&\leq & c\int_{|x|>R} \left(|\nabla u(t)|^2+\frac{1}{R^2}|u(t)|^2+\frac{1}{R^b}|u(t)|^{4}\right)dx,
\end{eqnarray}
where we have used that all derivatives of $\phi$ are bounded and  $|R\partial_{x_j}(|x|^{-b})|\leq c|x|^{-b}$ if $|x|>R$. 
 
 \ Next we use that $K$ is precompact in $H^1(\mathbb{R}^3)$. By Proposition \ref{PFIUL}, given $\varepsilon>0$ there exists $R_1>0$ such that $\int_{|x|>R_1} |\nabla u(t)|^2\leq\varepsilon$. Furthermore, by Mass conservation \eqref{mass}, there exists $R_2>0$ such that $\frac{1}{R^2_2}\int_{|x|>R_2} | u(t)|^2\leq \varepsilon$. Finally, by the Sobolev embedding $H^1\hookrightarrow L^{4}$, there exists $R_3$ such that $\frac{1}{R_3^b}\int_{|x|>R_3} |u(t)|^{4}\leq c\varepsilon$ (recall that $\|u(t)\|_{H^1_x}$ is uniformly bounded for all $t>0$ by \eqref{TR1} and Mass conservation \eqref{mass}). Taking $R=\max\{R_1,R_2,R_3\}$ the inequality \eqref{RESTO} implies
 \begin{equation}\label{RESTO1}
 |R(u(t))|\leq c\int_{|x|>R} \left(|\nabla u(t)|^2+\frac{1}{R^2}|u(t)|^2+\frac{1}{R^b}|u(t)|^{4}\right)dx\leq c\varepsilon.
 \end{equation} 
On the other hand, Lemma \ref{LGS} (iii), \eqref{SDz} and \eqref{RESTO1} yield
\begin{equation*}\label{RESTO2}
z''_R(t)\geq 16A E[u]-|R(u(t))|\geq 16AE[u]-c\varepsilon,
\end{equation*} 
where $A=1-w$ and $w=\frac{E[v]^{s_c}M[v]^{1-s_c}}{E[Q]^{s_c}M[Q]^{1-s_c}}$. \\
Now, choosing $\varepsilon=\frac{8A}{c}E[u]$, with $c$ as in \eqref{RESTO1} we have
$$
z''_R(t)\geq 8AE[u].
$$ 
Thus, integrating the last inequality from $0$ to $t$ we deduce 
\begin{equation}\label{SDI}
z'_R(t)-z'_R(0)\geq 8AE[u]t.
\end{equation}
Now sending $t\rightarrow \infty$ the left hand of \eqref{SDI} also goes to $+\infty$, however from \eqref{FDI} it must be bounded. Therefore, we have a contradiction unless $E[u]=0$ which implies $u\equiv 0$ by Lemma \ref{LGS} (i). 
\end{proof}
\end{theorem}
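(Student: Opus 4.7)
The plan is to implement a truncated virial (Morawetz) argument in the style of Kenig-Merle and Holmer-Roudenko, adapted to the inhomogeneous nonlinearity. The subcritical hypotheses \eqref{EMC}-\eqref{GFC} together with Lemma \ref{LGS} give a strict coercivity statement of the form
\begin{equation*}
8\|\nabla u(t)\|_{L^2}^2 - 2(3+b)\bigl\||x|^{-b}|u(t)|^4\bigr\|_{L^1} \;\geq\; 8A\,\|\nabla u(t)\|_{L^2}^2 \;\geq\; 16 A\, E[u],
\end{equation*}
where $A=1-w>0$. The idea is that this quantity is precisely the leading order term of a localized virial functional, so one can get a strict positive lower bound on the second derivative and then derive a contradiction with the linear-in-$R$ bound on the first derivative.

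First I would pick a radial cutoff $\phi\in C_0^\infty(\mathbb{R}^3)$ with $\phi(x)=|x|^2$ on $|x|\le 1$ and supported in $|x|\le 2$, and define $z_R(t)=\int R^2\phi(x/R)|u(t,x)|^2\,dx$. Using Proposition \ref{VI}, the identity \eqref{FD}, Cauchy--Schwarz, the mass conservation and the a priori gradient bound \eqref{GR} (which Theorem \ref{TG} gives from the hypotheses), I obtain $|z_R'(t)|\lesssim R$ uniformly in $t$, hence
\begin{equation*}
|z_R'(t)-z_R'(0)|\le 2cR\qquad\text{for all } t\ge 0.
\end{equation*}
Next I compute $z_R''(t)$ via \eqref{SD} and split it as
\begin{equation*}
z_R''(t)=8\|\nabla u(t)\|_{L^2}^2-2(3+b)\bigl\||x|^{-b}|u(t)|^4\bigr\|_{L^1}+\mathcal{R}(u(t)),
\end{equation*}
where $\mathcal{R}(u(t))$ collects all the error contributions coming from the region $|x|>R$ (on $|x|\le R$ one has $\phi(x)=|x|^2$, $\Delta\phi=6$, and the identity $\nabla(|x|^{-b})\cdot x=-b|x|^{-b}$ cancels the nonlinear piece exactly). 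Bounding each term crudely, I get
\begin{equation*}
|\mathcal{R}(u(t))|\lesssim \int_{|x|>R}\!\bigl(|\nabla u|^2+R^{-2}|u|^2+R^{-b}|u|^4\bigr)\,dx.
\end{equation*}

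The decisive step, which I expect to be the one where all the preparatory material comes to bear, is to show that $\mathcal{R}(u(t))$ is uniformly small in $t$ for $R$ large. This is exactly where the precompactness hypothesis on $K=\{u(t):t\ge 0\}$ enters, via Proposition \ref{PFIUL}: for every $\varepsilon>0$ there is $R_1$ such that $\int_{|x|>R_1}|\nabla u(t)|^2\le\varepsilon$ for all $t\ge 0$. The $L^2$ tail is controlled by conservation of mass (for $R$ large) and the $L^4$ tail by Sobolev embedding $H^1\hookrightarrow L^4$ applied to $u(t)$ on $|x|>R$, both uniformly in $t$. Choosing $R$ large enough yields $|\mathcal{R}(u(t))|\le c\varepsilon$ uniformly, so by Lemma \ref{LGS}(iii)
\begin{equation*}
z_R''(t)\;\ge\; 16 A\, E[u]-c\varepsilon.
\end{equation*}

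Finally, picking $\varepsilon=(8A/c)E[u]$ (provided $E[u]>0$) gives $z_R''(t)\ge 8AE[u]$; integrating on $[0,t]$ produces $z_R'(t)-z_R'(0)\ge 8AE[u]\,t\to\infty$, contradicting the uniform bound $|z_R'(t)-z_R'(0)|\le 2cR$. Therefore $E[u]=0$, and Lemma \ref{LGS}(i) (whose first inequality is strict for nonzero $u$) forces $\nabla u\equiv 0$, hence $u_0\equiv 0$. The main obstacle is the uniform localization step: once $\mathcal{R}(u(t))$ is shown to be small uniformly in $t$ (not merely small in $t$ for each fixed $R$), the argument is a direct virial computation; without precompactness there would be no way to rule out the mass and gradient escaping to spatial infinity and spoiling the error bound.
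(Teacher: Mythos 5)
Your proposal is correct and follows essentially the same route as the paper's proof: the same truncated virial functional, the same cancellation of the error terms on $|x|\le R$ (including the identity $\nabla(|x|^{-b})\cdot x=-b|x|^{-b}$), the same use of Proposition \ref{PFIUL}, mass conservation and Sobolev embedding to make the tail uniformly small, and the same application of Lemma \ref{LGS}(iii) followed by integration of $z_R''$ to reach the contradiction. No substantive differences to report.
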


\vspace{0.5cm}

\centerline{\textbf{Acknowledgment}}

\ 

L.G.F. was partially supported by CNPq and FAPEMIG/Brazil. C.G. was partially supported by Capes/Brazil. The authors also thank Svetlana Roudenko for fruitful conversations concerning this work.\\

\bibliographystyle{abbrv}
\bibliography{bibguzman}

\end{document}